%%  This version for arXiv:  can go on arXiv 6 months after publication
%%  The footnote refers to the Version of Record by DOI, using text suggested
%% on the publisher's page: https://www.cambridge.org/core/services/open-access-policies/open-access-books/green-open-access-policy-for-books

% Template for BCC2013 submission
% The style file calls the packages amssymb, amsmath, enumerate, latexsym and theorem, so you don't need to.
\documentclass[11pt]{article}
\usepackage{bcc24}
\usepackage{hyperref}

%%% >>> Packages Catherine's article needs <<< %%%
\usepackage{tikz}

%%%%%%%%%%%%%%

%The publisher has specified textwidth and textheight so please do not change them.
%The line below evens up the margins for printing on a4 paper.
\addtolength{\oddsidemargin}{-24pt}

%\DeclareMathOperator{\NewOp}{NewOp}   %This can be used for new roman type definitions.

%Some maths commands have already been defined, including \R, \Z, \N for real numbers,
%integers and natural numbers.

%%%%%%%%%%%%%%%%%%%%%%%%%%
%% Catherine's macros %%
\def\nfrac#1#2{{\textstyle\frac{#1}{#2}}}
\def\dfrac#1#2{\lower0.15ex\hbox{\large$\frac{#1}{#2}$}}

\def\svec{\boldsymbol{s}}
\def\tvec{\boldsymbol{t}}
\def\kvec{\boldsymbol{k}}
\def\wvec{\boldsymbol{w}}
\def\xvec{\boldsymbol{x}}
\def\kmax{k_{\max}}
\def\G{\mathcal{G}}

%%%%%%%%%%%%%%%%%%%%%%%%%%

\bcctitle{Generating graphs randomly\footnote{This survey has been published as a chapter in \emph{Surveys in Combinatorics 2021} (Dabrowski et al., eds), \doi{10.1017/9781009036214}. This version is free to view and download for private research and study only. Not for re-distribution or re-use. \copyright\ Cambridge University Press 2021}}
\bccshorttitle{Generating graphs randomly}

\bccname{Catherine Greenhill}
\bccshortname{Catherine Greenhill}

\bccaddressa{School of Mathematics and Statistics\\ UNSW Sydney\\ Sydney NSW 2052, Australia}
\bccemaila{c.greenhill@unsw.edu.au}
%%% for multiple authors use \bccaddressb and \bccemailb etc.

\begin{document}
\makebcctitle

\begin{abstract}
Graphs are used in many disciplines to model the relationships that exist
between objects in a complex discrete system.  Researchers may wish to
compare a network of interest to a ``typical'' graph from a family (or ensemble) 
of graphs which are similar in some way.
One way to do this is to take a sample of several random graphs from the family, to
gather information about what is ``typical''.   Hence there is a need for
algorithms which can generate graphs uniformly (or approximately uniformly)
at random from the given family.  Since a large sample may be required,
the algorithm should also be computationally efficient.

Rigorous analysis of such algorithms is
often challenging, involving both combinatorial and probabilistic arguments.
We will focus mainly on the set of all simple graphs with a particular degree sequence,
and describe several different algorithms for sampling graphs from this family
uniformly, or almost uniformly.
\end{abstract}

\section{Introduction}\label{s:introduction}

The modern world is full of networks, and many researchers use
graphs to model real-world networks of interest.  When studying
a particular real-world network it is often convenient to define a
family, or
\emph{ensemble}, of graphs which are similar to the network in some way.  
Then 
a random element of the ensemble provides a \emph{null model} against
which the significance of a particular property of the real-world
model can be tested.  For example, a researcher may observe that
their network contains what looks like a large number of copies
of a particular small subgraph $H$, also called a ``motif'' in network science.
If this number is large compared to the average number of
copies of $H$ in some appropriate ensemble of graphs, then this provides
some evidence that the high frequency of this motif may be related to 
the particular function of the real-world network. 
(For more on network motifs see for example~\cite{network-motifs}.)

In this setting, the null model is a random graph model, and it may be possible
to analyse the relevant properties using probabilistic combinatorics.
Where this is not possible, it is very convenient to have an
algorithm which provides uniformly random (or ``nearly'' uniformly
random) graphs from the ensemble, so that the average number of
copies of $H$ can be estimated empirically.  Such an algorithm should also be
efficient, as a large sample may be needed.
(In this survey, ``efficient'' means ``computationally efficient''.)
Another motivation for the usefulness of algorithms for sampling graphs
can be found in the analysis of algorithms which take graphs as input,
especially when the worst-case complexity bound is suspected to be
far from tight in the average case.  

The aim of this survey is to describe some of the randomized algorithms 
which have
been developed to efficiently sample graphs with certain properties,
with particular focus on the problem of
\emph{uniformly sampling graphs with a given degree sequence}.
We want to understand how
close the output distribution is to uniform, and how the runtime of the
algorithm depends on the number of vertices.  Hence we restrict our attention to algorithms
which have
been rigorously analysed and have certain performance guarantees.
In particular, statistical models such as the exponential random graph model
(see for example~\cite{CAR,Holland-Leinhardt,LKR,Wasserman-Pattison}),
will not be discussed.
Although we mainly restrict our attention to simple, undirected graphs,
most of the ideas discussed in this survey can also be applied to 
bipartite graphs, directed graphs
and hypergraphs, which are all extremely useful in modelling real-world
networks.  

It is still an open problem to find an efficient algorithm for
sampling graphs with an arbitrary degree sequence.  
For bipartite graphs, however, the sampling problem was solved
for arbitrary degree sequences by Jerrum, Sinclair and Vigoda~\cite{JSV},
as a corollary of their breakthrough work on approximating the permanent.  
See Section~\ref{s:jerrum-sinclair} for more detail.

This is not a survey on random graphs.  
(Our focus is on \emph{how} to randomly sample a graph from some
family efficiently, from an algorithmic perspective,
rather than on the properties of the resulting random graph.)
However, 
some techniques are useful both as tools to analyse random graphs
and as procedures for producing random graphs.  There are many 
texts on random graphs~\cite{bollobas-book, FK-book, JLR},
as well as Wormald's excellent survey on random regular graphs~\cite{wormald1999}.

Before proceeding, we remark that in network science, the phrase ``graph 
sampling algorithm'' can refer to an algorithm
for sampling vertices or subgraphs \emph{within} a given (huge) graph
(see for example~\cite{graph-sampling-algorithms}).
For this reason, we will avoid using this phrase and will instead refer to
``algorithms for sampling graphs''.

\section{Preliminaries and Background}\label{s:preliminaries}

\subsection{Notation and assumptions}

Let $[a] = \{ 1,2,\ldots, a\}$ for any positive integer $a$.

A \emph{multigraph} $G = (V,E)$ consists of a set of vertices $V$ and
a multiset $E$ of edges, where each edge is an unordered pair of vertices
(which are not necessarily distinct).  A loop is an edge of the form $\{v,v\}$
and an edge is repeated if it has multiplicity greater than one.
A \emph{graph} is a \emph{simple} multigraph: that is, a multigraph with 
no loops and no repeated edges.  
All graphs are finite and labelled,
so $V$ is a finite set of distinguishable vertices.
A directed multigraph is defined similarly, except that edges are now ordered pairs.
A directed graph is a directed multigraph which is simple, which means that it has no 
(directed) loops and no repeated (directed) edges.

Throughout, $n$ will be the number of vertices of a graph, unless otherwise
specified.  We usually assume that the vertex set is $[n]$.

Standard asymptotic notation will be used, and asymptotics are as $n\to\infty$
unless otherwise specified.   Let $f$, $g$ be real-valued functions of~$n$. 
\begin{itemize}
\item Write $f(n)=o(g(n))$ if $\lim_{n\to\infty} f(n)/g(n)=0$.
\item Suppose that $g(n)$ is positive when $n$ is sufficiently large.
We write $f(n) = O(g(n))$ if there exists a constant $C$ such that
$|f(n)| \leq C\, g(n)$ for all $n$ sufficiently large.
\item Now suppose that $f(n)$ and $g(n)$ are both positive when $n$ is 
sufficiently large.  If $f=O(g)$ and $g=O(f)$ then we 
write $f(n) = \Theta(g(n))$.
\end{itemize}
We sometimes write $\approx$ to denote
an informal notion of ``approximately equal''.
In pseudocode, we write ``u.a.r.'' as an abbreviation for
\emph{uniformly at random}.  

When calculating the runtime of algorithms, we use the ``Word RAM'' model of
computation~\cite{FW,hagerup}. In this model, elementary operations on integers with
$O(\log n)$ bits can be performed in unit time. 

Randomised algorithms require a source of randomness.
We assume that we have a perfect generator
for random integers uniformly distributed in $\{ 1,2,\ldots, N\}$
for any positive integer $N$. Furthermore, we assume that this perfect
generator takes unit time whenever $N$ has $O(\log n)$ bits.

\subsection{Which graph families?}\label{s:graph-families}

It is very easy to sample from some graph families:
\begin{itemize}
\item 
Let $\mathcal{S}(n)$ denote the set of all $2^{\binom{n}{2}}$ 
graphs on the vertex set $[n]$.    We can sample from $\G(n)$ very easily:
flip a fair coin independently for each unordered pair of distinct vertices 
$\{j,k\}$, and add
$\{j,k\}$ to the edge set if and only if the corresponding coin flip comes
up heads.
Every graph on $n$ vertices is equally likely, so this gives an exactly
uniform sampling algorithm with runtime $O(n^2)$.  
\item
Next we might consider $\mathcal{S}(n,m)$, the set
of all $\binom{\binom{n}{2}}{m}$ graphs on the vertex set $[n]$ with precisely
$m$ edges.  A uniformly random graph from this set can be generated 
edge-by-edge, starting with the vertex set $n$ and no edges. At each step,
choose a random unordered pair of distinct vertices $\{j,k\}$, without replacement,
and add this edge to the graph.  When the graph has $m$ edges, it is
a uniformly random element of $\mathcal{S}(n,m)$.   This algorithm has
runtime $O(n^2)$.
Letting $G_i$ denote the graph obtained after $i$ edges have been added,
the sequence $G_0,G_1,\ldots, G_m$ is known as the \emph{random graph process},
with $G_i$ a uniformly-random element of $\mathcal{S}(n,i)$ for all $i\in [m]$.
\end{itemize}

A uniform element from $\mathcal{S}(n,m)$ corresponds to the 
\emph{Erd\H{o}s--R{\' e}nyi random graph} $\mathcal{G}(n,m)$,
while the \emph{binomial random graph model} $\G(n,p)$ is obtained by 
adapting the process for sampling from $\mathcal{S}(n)$ described above, replacing the
fair coin by a biased coin which comes up heads with probability $p$.
These two random graph models have been
the subject of intense study for more than 60 years, 
see for example~\cite{bollobas-book,ER59,FK-book,gilbert,JLR}.
However, since polynomial-time sampling is easy for both
of these families (as described above), we will say no more about them.

\bigskip

Instead, our focus will be on algorithms for sampling graphs with a given
degree sequence. 
More generally, we might be interested in bipartite graphs, directed graphs or
hypergraphs with a given degree sequence.  Alternatively, we may want
to sample graphs with a given degree sequence and
some other property, such as connectedness or triangle-freeness.
There are many variations, but our main focus will be on sampling
from the set $G(\kvec)$ defined below. 

%\begin{definition}
\begin{Def}
\label{def:Gd}
A graph $G$ on vertex set $[n]$ has \emph{degree sequence} 
$\kvec = (k_1,\ldots, k_n)$ if $\operatorname{deg}_G(j) = k_j$ for all $j\in [n]$, 
where $\operatorname{deg}_G(j)$ denotes the
degree of $j$ in $G$.  Let $G(\kvec)$ denote the set of all graphs with 
degree sequence $\kvec$.  
A sequence $\kvec=(k_1,\ldots, k_n)$ of nonnegative integers with even
sum is \emph{graphical} if  $G(\kvec)$ is nonempty.
A graph with degree sequence $\kvec$ is a \emph{realization} of $\kvec$.
%\end{definition}
\end{Def}

We do not assume here that the elements of $\kvec$ are in non-ascending
order, though we will usually assume that all entries of $\kvec$ are positive.
Unlike the binomial random graph model $\G(n,p)$, the edges of a
randomly-chosen element of $G(\kvec)$ are not independent.
This lack of independence makes sampling from $G(\kvec)$ a non-trivial task.

If $\kvec = (k,k,\ldots, k)$ has every entry equal to $k$, then we
say that $\kvec$ is \emph{regular}.  The set of all $k$-regular graphs on
the vertex set $[n]$ will be denoted by
$G(n,k)$ instead of $G(\kvec)$.

\bigskip

We close this subsection with some more comments on graphical degree sequences.
The characterisations of 
Erd\H{o}s and Gallai~\cite{EG} and Havel and Hakimi~\cite{havel,hakimi} 
both give algorithms which can be used to decide,
in polynomial time, whether a given sequence is graphical. 
The Erd\H{o}s--Gallai Theorem says that if $k_1\geq \cdots \geq k_n$ then
$\kvec$ is graphical if and only if $\sum_{j=1}^n k_j$ is even and
\[ \sum_{j=1}^p k_j \leq p(p-1) + \sum_{j=p+1}^n \min\{ k_j,\, p\}\]
for all $p\in [n]$.
To avoid trivialities, we will always assume that the sequence $\kvec$ is graphical.
The Havel--Hakimi characterisation also assumes that entries of $\kvec$ are
in non-decreasing order, and states that $\kvec$ is graphical if and only if
\[ (k_2-1,\ldots, k_{k_1+1}-1,k_{k_1+2},\ldots, k_n)\]
has no negative entries and is graphical.  This leads to a greedy
algorithm to construct a realisation
of $\kvec$ in runtime $O(n^2)$: join vertex~1 to each of vertices 2,\ldots, $k_1+1$,
delete vertex~1, reduce the target degree of vertices $2,\ldots, k_1+1$
by~1, sort the new degree sequence into nonincreasing order if necessary,
and recurse.  The runtime of this greedy algorithm is $O(n^2)$.

\subsection{What kind of sampling algorithm?}

To be more precise about our goals, we need some definitions.
Let $(\Omega_n)_{n\in\mathcal{I}}$ be a sequence of finite sets indexed by a 
parameter $n$ from some infinite index set $\mathcal{I}$, such as
$\mathcal{I}=\Z^+$ or $\mathcal{I}=2\Z^+$.
Asymptotics are as $n$ tends to infinity along elements of $\mathcal{I}$.
We assume that $|\Omega_n|\to\infty$ as $n\to\infty$.

The reason that we consider a sequence of sets, rather than just one
set, is that it makes no sense to say that the runtime of an algorithm
is polynomial for a particular set $\Omega$.  If the runtime of an
algorithm for sampling from $\Omega$ is $T$,
then we could say that this is a \emph{constant-time algorithm}
with constant $T$, but then we learn nothing about how long the algorithm
might take when given a different set as input.
Having said that, in our notation we often drop the sequence notation
and simply refer to $\Omega_n$.

As a general rule, we say that a (uniform) sampling algorithm for $\Omega_n$ is
\emph{efficient} if its runtime is bounded above by a polynomial in
$\log(|\Omega_n|)$, as it takes $\log(|\Omega_n|)$ bits to describe an
element of $\Omega_n$.  However, the runtime of the algorithm may
have a deterministic upper bound, or it may be a random variable,
leading to the following paradigms:
\begin{itemize}
\item A \emph{Monte Carlo} algorithm is a randomised algorithm which is
guaranteed to terminate after
some given number of steps, but has some probability of incorrect output.
(The probability of incorrect output should be small.)
A~Monte Carlo sampling algorithm for $\Omega_n$ is efficient if
its runtime is bounded above by a polynomial in $\log(|\Omega_n|)$.
\item A \emph{Las Vegas} algorithm is a randomised algorithm which
is guaranteed to provide correct output with probability~1, but may have
no deterministic upper bound on its running time.  A Las Vegas
sampling algorithm for $\Omega_n$ is efficient
if its \emph{expected} runtime is bounded above by a polynomial in
$\log(|\Omega_n|)$.
\end{itemize}

Next, we focus on the output of the algorithm, and give three different definitions
of ``close to uniform''.  
First we need a notion of distance for probability distributions.

\begin{Def}
%\begin{definition}
\label{def:dTV}
Let $\sigma$ and $\pi$ be two probability distributions on the finite
set $\Omega$.  The \emph{total variation distance} between $\sigma$ and
$\pi$, denoted $d_{TV}(\sigma,\pi)$, is given by
\[
    d_{TV}(\sigma,\pi) 
    = \dfrac{1}{2}\sum_{x\in\Omega } |\sigma(x) - \pi(x)|
    = \max_{ S\subseteq \Omega}\, |\sigma(S) - \pi(S)|.
\]
Here $\sigma(S) = \sum_{x\in S} \sigma(x)$ for any event $S\subseteq \Omega$,
and similarly for $\pi(S)$.
%\end{definition}
\end{Def}

Suppose that some sampling algorithm over $\Omega_n$ has output distribution
$\sigma_n$, and let $\pi_n$ denote the uniform distribution over $\Omega_n$,
for any $n\in\mathcal{I}$.  
\begin{itemize}
\item
If $\sigma_n=\pi_n$ for all $n\in\mathcal{I}$ then we say that the algorithm 
is a \emph{uniform sampling algorithm} or \emph{uniform sampler}.
\item
If $\lim_{n\to\infty}\, d_{TV}(\sigma,\pi) = 0$ then we say that
the algorithm is an \emph{asymptotically uniform sampler}.
In this situation it is usually not possible to increase the accuracy
by running the algorithm for longer, as the total variation distance
depends only on $n$.
\item If $d_{TV}(\sigma_n,\pi_n) < \varepsilon$ for some positive constant
$\varepsilon$ then we say that the algorithm is an
\emph{almost uniform sampler}, and that the output is $\varepsilon$-\emph{close}
to uniform.  Often, $\varepsilon$ is provided by the user, and higher
accuracy (smaller $\varepsilon$) can obtained at the cost of a longer runtime.
\end{itemize}

The Markov chain approach to sampling, when successful, provides
an algorithm called an FPAUS.
See for example~\cite[Chapter~3]{jerrum-book}.

%\begin{definition}
\begin{Def}
\label{def:FPAUS}
Let $(\Omega_n)_{n\in\mathcal{I}}$ be a sequence of finite sets
indexed by a parameter $n$ from some infinite index set $\mathcal{I}$,
such that $|\Omega_n|\to\infty$ as $n\to\infty$.
A \emph{fully-polynomial almost uniform sampler (FPAUS)} for sampling from
$\Omega_n$ is an algorithm that, with probability at least $\nfrac{3}{4}$,
outputs an element of $\Omega_n$ in time polynomial in $\log|\Omega_n|$ and $\log(1/\varepsilon)$, such that the output distribution is \emph{$\varepsilon$-close} to the 
uniform distribution on $\Omega_n$ in total variation distance.
(That is, $d_{TV}(\sigma_n,\pi_n) < \varepsilon$ where $\sigma_n$ is the
output distribution and $\pi_n$ is the uniform distribution on $\Omega_n$.)
%\end{definition}
\end{Def}

If $\Omega_n = G(\kvec)$ for some graphical sequence $\kvec = (k_1,\ldots, k_n)$
then $\log |\Omega_n| = O(M\log M)$, 
where $M$ is the sum of the entries of $\kvec$.  This can
be proved using the configuration model: see (\ref{Gk-size}).
So an FPAUS for $G(\kvec)$ must
have running time bounded above by a polynomial in $n$
and $\log(1/\varepsilon)$, since $M\leq n^2$.  

\bigskip

Sampling and counting are closely related, and an algorithm for
one problem can often be transformed into an algorithm for the other.
While our
focus is firmly on sampling, we will also need the following
definition which describes a good approximate counting algorithm. 

\begin{Def}
%\begin{definition}
\label{def:FPRAS}
Let $(\Omega_n)_{n\in\mathcal{I}}$ be a sequence of finite sets
indexed by a parameter $n$ from some infinite index set $\mathcal{I}$,
such that $|\Omega_n|\to\infty$ as $n\to\infty$.
A \emph{fully-polynomial randomised approximation scheme} (FPRAS) for $\Omega_n$
is an algorithm which accepts as
input a parameter $\varepsilon>0$ and outputs
an estimate $X$ for $|\Omega_n|$ such that 
\[ \Pr\Big( (1-\varepsilon)|\Omega_n|\leq X\leq (1+\varepsilon)|\Omega_n|\Big)
  \geq \dfrac{3}{4},\]
with runtime polynomial in $\log|\Omega_n|$ and $\varepsilon^{-1}$. 
%\end{definition}
\end{Def}

The probability in this definition can be easily increased from $\nfrac{3}{4}$
to $1-\delta$, for any fixed $\delta\in (\nfrac{3}{4},1)$, by obtaining
$O(\log\delta^{-1})$ estimates and taking the median~\cite[Lemma~2.1]{SJ}.  

\bigskip

Before moving on, we say a little more about the connection between sampling and
counting.
Jerrum, Valiant and Vazirani~\cite{JVV} proved that for \emph{self-reducible} problems,
polynomial-time approximate counting is equivalent to polynomial-time almost-uniform sampling.  
Without going into too much detail, a problem is self-reducible if the solutions
for a given instance can be generated recursively using a small number of
smaller instances of the same problem.  For example, consider the set $\mathcal{M}(G)$
of all matchings (of any size) in a graph $G$.
Remove the edges of $G$ one-by-one (in lexicographical order, say) to form the sequence
\[ G = G_m > G_{m-1} > \cdots > G_1 > G_0 = (V,\emptyset).\]
Then $|\mathcal{M}(G_0)|=1$ and hence $|\mathcal{M}(G)| = 
\prod_{j=1}^{m} |\mathcal{M}(G_{j})|/|\mathcal{M}(G_{j-1})|$.  The $j$'th ratio
is the inverse of the probability that $e_j\not\in M$, where $M$ is
a matching chosen uniformly at random from $\mathcal{M}(G_j)$
and $e_j$ is the unique edge in $E(G_j)\setminus E(G_{j-1})$.
If we can sample almost-uniformly from the sets $\mathcal{M}(G_j)$ to 
sufficient accuracy then estimates for these probabilities
can be multiplied together  to provide a good estimate for $1/|\mathcal{M}(G)|$.
In this way, approximate counting can be reduced to almost-uniform sampling.
See~\cite[Chapter~3]{jerrum-book} for full details.

For sampling graphs, the situation is a little more complicated.
Erd{\H o}s et al~\cite{EKMS} showed that the problem of sampling 
graphs, or directed graphs, with
given degrees can be made self-reducible by supplying as input a 
small set of forbidden edges, and sampling from the set of graphs (or bipartite
graphs, or directed graphs) with specified degrees which avoid the forbidden edges.   
Using the fact that directed graphs can be modelled as bipartite graphs which
avoid a given perfect matching, the set of forbidden edges can be taken to be a star 
(for undirected graphs) or the the union of a star and a perfect matching. 
We remark that the exact counting problem (given a graphical sequence
$\kvec$, calculate $|G(\kvec)|$) is not known
to be \#P-complete, and similarly for bipartite or directed variants.

\subsection{Sampling graphs with given degrees: an overview}\label{s:history}

We assume that for all $n$ in some infinite index set $\mathcal{I}$,
we have a graphical degree sequence $\kvec(n) = (k_1(n),\ldots, k_n(n))$.
The length of $\kvec(n)$ is $n$, and the elements of $\kvec(n)$
might themselves be functions of $n$.   Our sequence $(\Omega_n)_{n\in\mathcal{I}}$
of sets (as discussed in the previous subsection) is given by taking
$\Omega_n = G\big(\kvec(n)\big)$.  From now on, we simply write
$\kvec = (k_1,\ldots, k_n)$ for the degree sequence, but we should remember
that in fact we have a \emph{sequence} of degree sequences, indexed by $n$.

The maximum entry in a degree sequence $\kvec$ is denoted $k_{\max}$.
Let $M = \sum_{j\in [n]} k_j$ be the sum of the degrees. Then $m=M/2$
the number of edges of any graph in $G(\kvec)$.  

There are a few different methods for sampling graphs with 
given degrees (restricted to algorithms which can be rigorously
analysed).  We outline the main approaches here, and go into more detail in 
the subsequent sections.  

\begin{itemize}
\item
The \emph{configuration model} was 
introduced by Bollob{\' a}s~\cite{B1980} in 1980, as a convenient
way to calculate the probability of events in random regular graphs.  
The model can be
used as an algorithm for sampling uniformly at random from $G(\kvec)$.
However, the expected runtime is high unless the degrees are very
small: specifically, $k=O(\sqrt{\log n})$ in the regular case.
\item
McKay and Wormald's \emph{switchings-based algorithm}~\cite{MW90} from 1990
performs (exactly) uniform sampling 
from $G(\kvec)$ in expected polynomial time, for a much wider range of 
degrees than the algorithm arising from the configuration model, namely
$k_{\max} =  O(M^{1/4})$.
Gao and Wormald~\cite{GW} and Arman, Gao and Wormald~\cite{AGW} have
extended and improved the McKay--Wormald algorithm, allowing it to
apply to a wider range of degrees and making it more efficient.  
These algorithms are fast, but a little complicated and difficult
to implement. See the end of Section~\ref{s:switching-recent}.
\item
Another approach is to use a \emph{Markov chain} with uniform stationary 
probability over a state space $G'(\kvec)$ which contains $G(\kvec)$.
If the Markov chain converges rapidly to its 
stationary distribution, and each step of the Markov chain can be implemented
efficiently, then this gives an FPAUS for $G'(\kvec)$.  
In 1990, Jerrum and Sinclair~\cite{JS90} described and analysed a Markov chain
which samples from a set $G'(\kvec)$ of graphs with degree sequence close
to $\kvec$.  The Jerrum--Sinclair chain
is efficient only when $G(\kvec)$ forms a sufficiently large fraction of $G'(\kvec)$.
When this condition on $G(\kvec)$ holds, rejection sampling can be used to restrict 
the output of the chain to $G(\kvec)$.
The requirement that $|G(\kvec)|/|G'(\kvec)|$ is sufficiently large gives
rise to a notion of \emph{stability} of degree sequences.
Another well-studied Markov chain, the \emph{switch chain}, has state
space $G(\kvec)$ and thus avoids rejection sampling.
Most proofs in this area use Sinclair's multicommodity flow
method~\cite{sinclair}, and the resulting bounds, when polynomial,
tend to be rather high-degree and are not believed to be tight.
\item
In 1999, Steger and Wormald~\cite{SW99} presented an algorithm for
performing asymptotically uniform sampling for $k$-regular graphs.
Their aim was to provide an algorithm which is both 
genuinely fast (runtime $O(k^2 n)$  when $k$ is a small power
of $n$) and easy to implement. 
While the idea for the algorithm is motivated by the
bounded-degree graph process~\cite{RW}, the algorithm is presented as
a modification of the configuration model.  
We also describe extensions and enhancements by Kim and Vu~\cite{KV}
and Bayati, Kim and Saberi~\cite{BKS}: in particular, Bayati et al.~\cite{BKS}
used sequential importance sampling to provide
an algorithm which is \emph{almost} an FPAUS, when $k_{\max} = O(M^{\frac{1}{4}-\tau})$ for any constant $\tau>0$.
\end{itemize}

The configuration model is described in Section~\ref{s:config},
followed in Section~\ref{s:SW} by the sequential algorithms 
beginning with the work of Steger and Wormald.
Switchings-based algorithms are discussed in Section~\ref{s:switchings}, 
and Markov chain (MCMC) algorithms are presented in Section~\ref{s:markov}.

\section{The configuration model}\label{s:config}

The \emph{configuration model}, introduced by Bollob{\' a}s~\cite{B1980},
is very useful in the analysis of random graphs with given degrees.
It also arose in asymptotic enumeration work of Bender and Canfield~\cite{BC},
and was first explicitly used as an algorithm by Wormald~\cite{wormald1984}.

Given a degree sequence $\kvec$, and recalling that $M=\sum_{j\in [n]} k_j$,
we take $M$ objects,
called \emph{points}, grouped into $n$ \emph{cells}, where the $j$'th
cell contains $k_j$ points.  Each point is labelled, and hence
distinguishable. (You can think of the points corresponding to vertex
$j$ as being labelled by $(j,1),\ldots, (j,k_j)$, say.  But we
will not refer to these labels explicitly.) 

In the network theory literature (for example~\cite{FLNU}), 
points are sometimes called 
\emph{stubs}, or \emph{half-edges}, without the concept of a cell
(so that $k_j$ half-edges emanate from vertex $j$).  

A \emph{configuration}, also called  a \emph{pairing},
is a partition of the $M$ points into $M/2$ pairs.
This is often described as a perfect matching of the $M$ points.
Given a configuration $P$, shrinking each cell to a vertex 
and replacing each pair by an edge gives a multigraph $G(P)$ with
degree sequence $\kvec$.
The multigraph is simple if it has no loops and no repeated edges,
and in this case we also say that $P$ is simple.

\begin{figure}[ht!]
\begin{center}
\begin{tikzpicture}[scale=0.6]
\draw  (-1,1) circle (0.5 and 0.8);
\draw  (-1,3) circle (0.5 and 0.8);
\node [above] at (0.8,4.5) {\large $\ast$};
\node [above] at (4.0,4.5) {\large $\ast\ast$};
\node [below] at (1.0,-0.5) {\large $\ast\ast$};
\draw (6.0,1) circle (0.5 and 0.8);
\draw (6.0,3) circle (0.5 and 0.5);
\draw (1.0,0) circle (1.3 and 0.5);
\draw (4,4) circle (1.3 and 0.5);
\draw (4,0) circle (0.8 and 0.5);
\draw (1.0,4) circle (1.3 and 0.5);
\draw [fill] (-1,0.5) circle (0.1); \draw [fill] (-1,1.5) circle (0.1);
\draw [fill] (-1,2.5) circle (0.1); \draw [fill] (-1,3.5) circle (0.1);
\draw [fill] (6.0,0.5) circle (0.1); \draw [fill] (6.0,1.5) circle (0.1);
\draw [fill] (6.0,3) circle (0.1); 
\draw [fill] (0.0,0) circle (0.1); \draw [fill] (1.0,0) circle (0.1);
\draw [fill] (2.0,0) circle (0.1); 
\draw [fill] (0.0,4) circle (0.1); \draw [fill] (1.0,4) circle (0.1);
\draw [fill] (2.0,4) circle (0.1); 
\draw [fill] (3.0,4) circle (0.1); \draw [fill] (4.0,4) circle (0.1);
\draw [fill] (5.0,4) circle (0.1); 
\draw [fill] (3.5,0) circle (0.1); \draw [fill] (4.5,0) circle (0.1); 
\draw [-] (-1,0.5) -- (0,0) (1,0) -- (5,4)  (2,0) -- (3,4);
\draw [-] (-1,2.5) -- (6,3) (4.5,0) -- (6,0.5) (6,1.5) -- (3.5,0) (-1,1.5) -- (4,4); 
\draw [-] (-1,3.5) -- (0,4); \draw [-] plot [smooth] coordinates {(1,4) (1.3,4.8) (1.7,4.8) (2,4)};
\begin{scope}[shift={(10,0)}]
\draw (-1,1) circle (0.5 and 0.8);
\draw (-1,3) circle (0.5 and 0.8);
\draw (6.0,1) circle (0.5 and 0.8);
\draw (6.0,3) circle (0.5 and 0.5);
\draw (1.0,0) circle (1.3 and 0.5);
\draw (4,4) circle (1.3 and 0.5);
\draw (4,0) circle (0.8 and 0.5);
\draw (1.0,4) circle (1.3 and 0.5);
\draw [fill] (-1,0.5) circle (0.1); \draw [fill] (-1,1.5) circle (0.1);
\draw [fill] (-1,2.5) circle (0.1); \draw [fill] (-1,3.5) circle (0.1);
\draw [fill] (6.0,0.5) circle (0.1); \draw [fill] (6.0,1.5) circle (0.1);
\draw [fill] (6.0,3) circle (0.1); 
\draw [fill] (0.0,0) circle (0.1); \draw [fill] (1.0,0) circle (0.1);
\draw [fill] (2.0,0) circle (0.1); 
\draw [fill] (0.0,4) circle (0.1); \draw [fill] (1.0,4) circle (0.1);
\draw [fill] (2.0,4) circle (0.1); 
\draw [fill] (3.0,4) circle (0.1); \draw [fill] (4.0,4) circle (0.1);
\draw [fill] (5.0,4) circle (0.1); 
\draw [fill] (3.5,0) circle (0.1); \draw [fill] (4.5,0) circle (0.1); 
\draw [-] (-1,1.5)-- (-1,2.5) (-1,0.5) -- (4,4) (-1,3.5) -- (4.5,0);
\draw [-] (0,4)-- (3.5,0) (1,4) -- (0,0) (2,4) -- (3,4);
\draw [-] (5,4)-- (6,0.5) (6,3)--(1,0) (6,1.5)-- (2,0);
\end{scope}
\end{tikzpicture}
\caption{Two configurations with the same degree sequence}
\label{fig:config-example}
\end{center}
\end{figure}
Figure~\ref{fig:config-example} shows two configurations
with the same degree sequence, namely $\kvec = (3,3,1,2,2,3,2,2)$ if cells
are labelled clockwise from the top-left.  
The small black circles represent points, which are shown inside cells,
and the lines between points represent pairs.
The configuration on the left is not
simple, as it will produce a loop on the vertex corresponding to the cell
marked with ``$\ast$'', and a repeated edge between the vertices
corresponding to the two cells marked with ``$\ast\ast$''.  The configuration on 
the right is simple.

Let $\mathcal{P}(\kvec)$ be the set of all configurations corresponding
to the degree sequence $\kvec$.   The term \emph{configuration model}
typically refers to the uniform probability model over the set $\mathcal{P}(\kvec)$.
A uniformly random configuration from $\mathcal{P}(\kvec)$ 
can be chosen in $O(M)$ time, as follows.  Starting with all
points unmatched, at each step take an arbitrary point $p$ and pair it with
a point chosen uniformly at random from the remaining unmatched points
(excluding $p$).
Once all points have been paired up, we have a configuration $P$
and each configuration is equally likely.  

The configuration model can be used as an algorithm for sampling uniformly 
from $G(\kvec)$, by repeatedly sampling $P\in \mathcal{P}(\kvec)$ uniformly
at random until $G(P)$ is simple.
This algorithm is displayed in Figure~\ref{fig:config}.
\smallskip

\begin{figure}[ht!]
\begin{center}
\hspace{\dimexpr-\fboxrule-\fboxsep\relax}\fbox{
\begin{minipage}{0.8\textwidth}
\begin{tabbing}
  \textsc{Configuration model sampling algorithm}\\
    X \= \kill
  \> XXXs \= \kill 
  \emph{Input:}\>\> graphical sequence $\kvec$\\
  \emph{Output:}\>\> element of $G(\kvec)$\\
  \\
    repeat\\
    %X \= \kill
    \>  choose $P\in \mathcal{P}(\kvec)$ u.a.r.\ \\
    until $G(P)$ is simple\\
    output $G(P)$
\end{tabbing}
\end{minipage}}
\caption{The configuration model as a sampling algorithm}
\label{fig:config}
\end{center}
\end{figure}

Observe that if $G$ is a simple graph with degree sequence $\kvec$ then
$G$ corresponds to exactly $\prod_{j\in [n]} k_j!$ configurations,
as there are $k_j!$ ways to assign points to the edges incident with vertex $j$,
and these assignments can be made independently for each vertex $j\in [n]$.
Hence every element of $G(\kvec)$ is equally likely to be produced
as output of the above process.

This gives a Las Vegas sampling algorithm, with expected runtime
which depends linearly on the probability that a random configuration
is simple.
Hence, the configuration model can be used for efficient sampling when 
the probability that a randomly chosen configuration is simple
is bounded below by $1/p(n)$, for some polynomial $p(n)$.
In this case, the expected number of trials before a simple configuration
is found is at most $p(n)$, and the expected runtime is $O(M\, p(n))$.

A multigraph is simple if and only if it contains no 1-cycles (loops)
and no 2-cycles (arising from repeated edges).   
If the maximum 
degree is not too large compared to the number of edges, then 
in a uniformly random element of $\mathcal{P}(\kvec)$, the
number of 1-cycles and the number of 2-cycles are asymptotically independent
Poisson random variables.
In the $k$-regular case, Bender and Canfield~\cite{BC} proved in 1978
that a uniformly random configuration is simple with probability 
$(1+o(1))\, e^{-(k^2-1)/4}$.  Hence the configuration model for 
$k$-regular graphs gives an expected polynomial time algorithm as long as 
$k = O(\sqrt{\log n})$. 
A very precise estimate of $\Pr(\text{simple})$, with many significant
terms,  was given by McKay and Wormald~\cite{McKW91} in 1991 under the 
assumption that 
$k_{\max}^3 = o(M)$.  To prove the following result, we use 
the estimate (\ref{eq:Pr-simple}) obtained by Janson~\cite{Janson2009}
in 1999, which is valid for a wider range of degrees. 

\begin{theorem} \emph{\cite{Janson2009}}\
Let $R=R(\kvec)$ be defined by $R = \sum_{j\in [n]} k_j^2$.
The configuration model gives a uniform sampling algorithm for $G(\kvec)$.
If $k_{\max}^2 = o(M)$ then 
the expected runtime of this algorithm is
\[ \Theta\big(M\,\exp\big(R^2/(4M^2)\big)\big)\]
when $k_{\max}^2 = o(M)$. 
So the expected runtime is polynomial
if and only if $R = \Theta(M\sqrt{\log n})$.  
In particular, if $k_{\max} = O(\sqrt{\log n})$ then the expected
runtime is polynomial.
\label{thm:config}
\end{theorem}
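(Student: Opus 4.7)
The plan has two components: (i) verify that the output of the algorithm is uniform on $G(\kvec)$, and (ii) evaluate the expected runtime and convert the bound into the polynomial-time characterisation.

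For (i), I would argue that for any simple graph $G \in G(\kvec)$, the number of configurations $P \in \mathcal{P}(\kvec)$ with $G(P) = G$ is exactly $\prod_{j \in [n]} k_j!$: within cell $j$, the $k_j$ points can be assigned to the $k_j$ edges incident with vertex $j$ in $k_j!$ ways, independently across cells. This multiplicity does not depend on the particular $G$, so conditioning a uniform $P \in \mathcal{P}(\kvec)$ on the event $\{G(P)\text{ is simple}\}$ produces a uniformly random element of $G(\kvec)$. This settles the uniformity claim without any restriction on $\kvec$.

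For (ii), each iteration of the algorithm generates a uniform $P \in \mathcal{P}(\kvec)$ in time $\Theta(M)$ by the sequential pairing procedure described in the text, so the outer loop is a geometric rejection sampler whose expected number of trials is $1/\Pr(\text{simple})$ and whose expected runtime is $\Theta(M/\Pr(\text{simple}))$. At this point I would invoke Janson's estimate~(\ref{eq:Pr-simple}) (valid under $k_{\max}^2 = o(M)$), which asserts that the numbers of loops and of parallel edge-pairs in a random configuration are asymptotically independent Poisson variables whose combined means yield $\Pr(\text{simple}) = \Theta(\exp(-R^2/(4M^2)))$ once the $R/M$ lower-order contribution from loops is absorbed. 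Substituting gives the claimed expected runtime $\Theta(M \exp(R^2/(4M^2)))$. The only non-routine ingredient here is Janson's Poisson-approximation estimate itself, which I would cite rather than reprove.

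For the polynomial-time characterisation, since $M \le n^2$, the factor $M$ is polynomially bounded in $n$, and $M\exp(R^2/(4M^2))$ is polynomial in $n$ iff $R^2/(4M^2) = O(\log n)$, i.e.\ iff $R = O(M\sqrt{\log n})$, matching the threshold $R = \Theta(M\sqrt{\log n})$ stated in the theorem. Finally, the Cauchy--Schwarz-style bound $R = \sum_{j\in[n]} k_j^2 \le k_{\max} \sum_{j\in[n]} k_j = k_{\max} M$ shows that $k_{\max} = O(\sqrt{\log n})$ suffices to place us in the polynomial regime. Thus the only substantive obstacle is Janson's probabilistic estimate, and after invoking it the remainder is a routine manipulation of the exponential and a one-line arithmetic bound on $R$.
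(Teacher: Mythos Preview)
Your proposal is correct and follows essentially the same approach as the paper's proof: both argue uniformity via the equal multiplicity $\prod_j k_j!$ of configurations per simple graph, compute the expected runtime as $\Theta(M)/\Pr(\text{simple})$, cite Janson's estimate for $\Pr(\text{simple})$, and finish with the bound $R\le k_{\max}M$. Your write-up is slightly more detailed (e.g.\ making explicit that $M\le n^2$ so the factor $M$ is already polynomial), but the logical structure is identical.
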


\begin{proof} (Sketch.)\
The output is distributed uniformly as each element of $G(\kvec)$ is simple,
and hence corresponds to the same number of configurations in $\mathcal{P}(\kvec)$.
The expected number of trials required before a simple configuration is found
is $1/\Pr(\text{simple})$, where $\Pr(\text{simple})$ denotes the
probability that a uniformly chosen configuration from $\mathcal{P}(\kvec)$
is simple.
Janson~\cite{Janson2009} proved that if $k_{\max}^2 = o(M)$ then the 
probability that a random configuration is simple is
\begin{equation}
\label{eq:Pr-simple}
 \Pr(\text{simple}) = \exp\left( - \frac{R^2}{4M^2} + \dfrac{1}{4}\right) + o(1).
\end{equation}
Hence the expected runtime of the algorithm is 
$\Theta\big(M/\Pr(\text{simple})\big)$, which is bounded above by a polynomial
if and only if $R  = \Theta(M\, \sqrt{\log n})$. 
The last statement of the theorem follows since $R\leq k_{\max} M$.
\end{proof}

There are versions of the configuration model which can be used to
sample bipartite graphs, directed graphs or hypergraphs with a given
degree sequence.  In all cases, the expected runtime is polynomial only
for constant or very slowly-growing degrees.

\section{Sequential algorithms and graph processes}\label{s:SW}

The study of graph processes dates back to the very beginnings of the study of
random graphs, in the work of Erd\H{o}s and R{\' e}nyi~\cite{ER60}.
In a random graph process, edges are added to an empty graph one by one, 
chosen randomly from the set of all non-edges, sometimes with additional 
constraints.  
In 1979, Tinhofer~\cite{tinhofer} described such an algorithm for sampling
from $G(\kvec)$ non-uniformly. 
The \emph{a posteriori} output probability could be calculated and,
in theory, this could be combined
with a rejection step in order to achieve uniformly distributed output.
However, the runtime of the resulting algorithm (with the
rejection step) is not known.

Recall that $G(n,k)$ denotes the set of all $k$-regular graphs on $[n]$.
The bounded-degree 
graph process starts with the empty graph on $n$ vertices (with no
edges), and repeatedly chooses two distinct non-adjacent vertices with degree at most
$k-1$, uniformly at random, and joins these two vertices by an edge.
When no such pair of vertices remain, either we have a $k$-regular graph
or the process has become stuck. 
(The name ``bounded-degree graph process'' does not mean that all the degrees
are $O(1)$.  Rather, it means that we add edges sequentially but do not allow
the degree of any vertex to exceed $k$, so we maintain this upper bound on
all degrees.)

 Ruci{\' n}ski and Wormald~\cite{RW}
proved that for any constant $k$, the process produces a $k$-regular
graph with probability $1-o(1)$.  The output distribution
is not uniform, and is not well understood.  
However, it is conjectured that the output of the bounded-degree graph process is
contiguous with the uniform distribution over $G(n,k)$: see Wormald~\cite[Conjecture 6.1]{wormald1999}.  (Two sequences of probability spaces are \emph{contiguous} if any event
with probability which tends to~1 in one sequence must
also tend to~1 in the other.)  

We now turn to sequential algorithms which produce asymptotically uniform
output.

\subsection{The regular case}

Steger and Wormald~\cite{SW99} described an algorithm for
sampling from $G(n,k)$ using the following modification
of the configuration model algorithm.  Instead of choosing a configuration
$P$ uniformly at random, and then rejecting the resulting
graph $G(P)$ if it is not simple, we choose one pair at a time and only
keep those pairs which do not lead
to a loop or a repeated edge.  Specifically, we start with $kn$ points
in $n$ cells, each with $k$ points.  Let $U$ be the set of unpaired points,
which initially contains all $kn$ points.  A set of two points $\{p,p'\}\subseteq U$
is \emph{suitable}
if $p$ and $p'$ belong to different cells, and no pair chosen so far
contains points from the same two cells as $p,p'$.
After repeatedly choosing pairs of suitable points, the algorithm
may get stuck, or else reaches a full configuration $P$ (with $kn/2$ pairs)
and outputs the simple $k$-regular graph $G(P)$.
The algorithm is given in pseudocode in Figure~\ref{fig:SW}.

\begin{figure}[ht!]
\begin{center}
\hspace{\dimexpr-\fboxrule-\fboxsep\relax}\fbox{
\begin{minipage}{0.8\textwidth}
\begin{tabbing}
  \textsc{Steger--Wormald algorithm}\\
    X \= \kill
  \> XXXs \= \kill 
  \emph{Input:}\>\> $n$ and $k$, with $kn$ even\\
  \emph{Output:}\>\> element of $G(n,k)$\\
  \\
  repeat\\
  X \= \kill
  \> let $U$ be the set of all $kn$ points \\
  \> let $P:= \emptyset$\\
    \> repeat\\
    \> X \= \kill
    \> \>  choose a set of two distinct points $\{p,p'\}\subseteq U$ u.a.r. \\
    \> \> if $\{p,p'\}$ is suitable then add $\{p,p'\}$ to $P$ and delete $\{p,p'\}$ from $U$\\
    \> until $U$ contains no suitable pairs of points\\
    until $G(P)$ is $k$-regular\\
    output $G(P)$
\end{tabbing}
\end{minipage}}
\end{center}
\caption{The Steger--Wormald algorithm}
\label{fig:SW}
\end{figure}

Though the explanation above involves the configuration model,
Steger and Wormald state that their algorithm arose from adapting
the bounded-degree processes.  In this setting, the
Steger--Wormald algorithm corresponds to choosing
the vertices $u,v$ to add at the next step with a non-uniform probability.
To be specific, if $k'(x)$ denotes the current degree
of vertex $x$ in the graph formed by the edges chosen so far,  then
the Steger--Wormald algorithm chooses $\{ u,v\}$ as the next edge
with probability proportional to $(k-k'(u))(k-k'(v))$. 

The following theorem is a combination of Steger and Wormald's 
results~\cite[Theorems 2.1, 2.2 and 2.3]{SW99}.

\begin{theorem} \emph{\cite{SW99}}\
\label{thm:SW}
Let $\Pr(G)$ denote the probability that a given graph
$G\in G(n,k)$ is produced as output of the Steger--Wormald algorithm.
\begin{itemize}
\item[\emph{(i)}] If $k=O(n^{1/28})$ then there exists a function $f(n,k)=o(1)$
such that for every $G\in G(n,k)$,
\[ \left|\Pr(G) - |G(n,k)|^{-1}\right| \ < \frac{f(n,k)}{|G(n,k)|}.\]
\item[\emph{(ii)}] If $k=o\big((n/\log^3 n)^{1/11}\big)$ then there exists
a function $f(n,k)=o(1)$ and a subset $\mathcal{X}(n,k)\subseteq G(n,k)$ such that
\[ \Pr(G) = (1 + O\big( f(n,k)\big)\big)\, |G(n,k)|^{-1}\]
for all $G\in\mathcal{X}(n,k)$, and $|\mathcal{X}(n,k)| = (1-f(n,k))\, |G(n,k)|$.
\item[\emph{(iii)}]
Under the same condition as (ii), the expected number of times that the outer loop of the algorithm
is performed (that is, until $G(P)$ is regular) is $1+o(1)$, and hence the
runtime of the algorithm is $O(k^2 n)$.
\end{itemize}
\end{theorem}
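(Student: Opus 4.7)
The plan is to compute $\Pr(G)$ directly for an arbitrary $G\in G(n,k)$ by tracking the algorithm step by step. Fix an ordering $\pi=(e_1,\ldots,e_m)$ of the $m=kn/2$ edges of $G$, write $e_i=\{u_i,v_i\}$, and let $k'_{\pi,i}(x)$ denote the degree of vertex $x$ after the first $i$ edges of $\pi$ have been added. At step $i$, the algorithm picks an unordered pair of unpaired points uniformly at random, keeps it if suitable, and otherwise retries. The probability that a single successful acceptance adds exactly the edge $e_{i+1}$ is $(k-k'_{\pi,i}(u_{i+1}))(k-k'_{\pi,i}(v_{i+1}))/S_{\pi,i}$, where $S_{\pi,i}$ denotes the number of suitable pairs. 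The first key observation is that the numerator telescopes: for each vertex $x$, the factors $(k-k'_{\pi,\cdot}(x))$ taken over the $k$ edges incident to $x$ multiply to $k!$ regardless of the ordering, so
\[ \Pr(G \mid \text{success}) = \frac{(k!)^n}{\Pr(\text{success})}\sum_\pi \prod_{i=0}^{m-1}\frac{1}{S_{\pi,i}}. \]
Since $\Pr(\text{success})$ does not depend on $G$, it suffices to show that the sum on the right is, up to small multiplicative error, the same for every (typical) $G$.

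The heart of the proof is a concentration estimate for $S_{\pi,i}$. Decompose
\[ S_{\pi,i} = \binom{kn-2i}{2} - \sum_x \binom{k-k'_{\pi,i}(x)}{2} - \sum_{\{u,v\}\in E_{\pi,i}}(k-k'_{\pi,i}(u))(k-k'_{\pi,i}(v)), \]
where the subtracted terms count intra-cell pairs and pairs that would repeat an existing edge. For the vast majority of orderings $\pi$, both correction terms are close to their deterministic main terms, which depend only on $n$, $k$ and $i$; heuristically, after $i$ successful pairings an average vertex has degree $2i/n$ so $(k-k'(x))\approx k(1-2i/(kn))$. I would verify this concentration either by a martingale argument tracking the evolution of the partial graph, or by first-moment estimates on the number of orderings for which $S_{\pi,i}$ deviates from its predicted value by more than a small relative amount. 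Once this is established, $\log \prod_i 1/S_{\pi,i}$ can be replaced by a deterministic expression depending only on $n$ and $k$, making the sum over $\pi$ equal to $m!\cdot(\text{function of }n,k)$ up to negligible error, and hence $\Pr(G)$ equal to the same function up to error $o(|G(n,k)|^{-1})$.

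Parts (i) and (ii) correspond to two levels of sharpness of this concentration: for (i), one must control $\Pr(G)$ for every $G\in G(n,k)$, which forces a worst-case analysis of the corrections terms over all orderings and is responsible for the weaker range $k=O(n^{1/28})$; for (ii), one is allowed to discard an $o(1)$-fraction of graphs (those for which the concentration fails at some step), and the resulting slackness extends the range to $k=o((n/\log^3 n)^{1/11})$. Part (iii) is immediate from the same concentration: as long as $S_{\pi,i}>0$ throughout, the inner loop terminates with a full configuration, so one only needs $\Pr(\text{success})=1-o(1)$, which follows from $S_{\pi,i}$ being close to its main term (which is positive for all $i<m$). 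Then the expected number of outer iterations is $(1-o(1))^{-1}=1+o(1)$, and the per-iteration runtime is $O(k^2 n)$ since we attempt $O(kn)$ successful pairings and the rejection probability at each attempt is $O(k/n)$. The main obstacle in the plan is the concentration argument for the repeated-edge correction $\sum_{\{u,v\}\in E}(k-k'(u))(k-k'(v))$: unlike the loop correction, this term depends not only on the degree profile but on the actual set of edges added so far, which is what ultimately controls the admissible range of~$k$.
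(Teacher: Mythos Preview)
The survey does not give a proof of this theorem; it merely states the result, attributing it to Theorems~2.1--2.3 of~\cite{SW99}. There is therefore no proof in the paper to compare your proposal against.

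For what it is worth, your outline follows the strategy of the original Steger--Wormald paper closely. The telescoping identity reducing $\Pr(G)$ to $(k!)^n\sum_\pi\prod_i 1/S_{\pi,i}$ (up to the factor $1/\Pr(\text{success})$) is exactly their starting point, and the crux of their argument is indeed the concentration of each $S_{\pi,i}$ around a value depending only on $n$, $k$ and $i$. Your diagnosis that the repeated-edge correction term is what ultimately governs the admissible range of $k$ is also correct, as is the distinction between the worst-case analysis over all $G$ required for~(i) and the typical-case analysis that suffices for~(ii).
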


In particular, when $k=o\big((n/\log^3 n)^{1/11}\big)$, the output 
distribution of the
Steger--Wormald algorithm is within $o(1)$ of uniform in total variation distance.

\bigskip

Kim and Vu~\cite{KV} gave a new analysis of the Steger--Wormald algorithm
using a concentration result of Vu~\cite{Vu}, increasing the upper bound
on the degree and confirming a conjecture of Wormald~\cite{wormald1999}.

\begin{theorem}\, \emph{\cite{KV}}\
\label{thm:KV}
Let $0<\varepsilon < \dfrac{1}{3}$ be a constant.
Then for any $k\leq n^{1/3 -\varepsilon}$ and $G\in G(n,k)$, 
the probability $\Pr(G)$ that $G$ is output by the Steger--Wormald algorithm 
satisfies 
$\Pr(G) = (1+o(1))\, |G(n,k)|^{-1}$.
\end{theorem}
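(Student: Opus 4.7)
The plan is to show that for any two graphs $G_1,G_2 \in G(n,k)$ one has $\Pr(G_1)/\Pr(G_2) = 1+o(1)$, from which the conclusion follows. First I would write $\Pr(G)$ explicitly as a sum over ordered sequences of pairs. Because the outer \texttt{repeat} loop retries until the inner loop succeeds, the output distribution equals the distribution of a single successful inner-loop execution. Within the inner loop, conditional on the rejection step eventually adding some new pair, the pair added is uniform over the set of all currently suitable pairs. Writing $m = kn/2$ and letting $S_i(\omega)$ denote the number of suitable point-pairs after the first $i$ pairs of an ordered sequence $\omega$ have been placed,
\begin{equation*}
    \Pr(G) \;=\; \frac{1}{\Pr(\mathrm{success})}\sum_{\omega \to G} \prod_{i=0}^{m-1} \frac{1}{S_i(\omega)},
\end{equation*}
where $\omega$ ranges over the $m!\,(k!)^n$ ordered sequences of pairs that realise $G$. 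Since both $m!\,(k!)^n$ and $\Pr(\mathrm{success})$ are independent of $G$, it suffices to show the sum on the right is, up to a factor $1+o(1)$, the same for every $G \in G(n,k)$.

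The core step is to establish sharp concentration of $S_i$ around a deterministic value $\hat{S}_i$ depending only on $n$, $k$, and $i$. Letting $d_i(v)$ be the number of unpaired points at vertex $v$ after step $i$ and $E_i$ the set of edges chosen so far,
\begin{equation*}
    S_i \;=\; \binom{2(m-i)}{2} - \sum_{v \in [n]} \binom{d_i(v)}{2} - \sum_{\{u,v\} \in E_i} d_i(u)\, d_i(v),
\end{equation*}
the three terms counting all pairs, loop-creating pairs, and multi-edge-creating pairs. The fluid-limit heuristic predicts $d_i(v) \approx \hat{d}_i := k(1 - i/m)$, hence $\hat{S}_i \approx \binom{2(m-i)}{2} - n\binom{\hat{d}_i}{2} - i\,\hat{d}_i^{\,2}$. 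To rigorously justify this, I would express $d_i(v)$ and the loop/multi-edge counts as polynomials in indicator variables for the algorithm's pair choices and invoke Vu's polynomial concentration inequality~\cite{Vu}, which gives tail bounds strong enough to tolerate $k$ up to $n^{1/3-\varepsilon}$. This is precisely the sharpening over the Azuma--Hoeffding/martingale bounds used in~\cite{SW99} that makes the extended range of $k$ accessible.

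Given uniform concentration $S_i(\omega) = (1+o(1))\hat{S}_i$ along a $(1-o(1))$-weighted fraction of sequences, the product $\prod_i 1/S_i(\omega)$ equals $(1+o(1))\prod_i 1/\hat{S}_i$, a quantity independent of $G$. Therefore $\sum_{\omega \to G} \prod_i 1/S_i(\omega) = (1+o(1))\,m!\,(k!)^n\prod_i 1/\hat{S}_i$, so $\Pr(G)$ is common across all $G \in G(n,k)$ up to $1+o(1)$, which forces $\Pr(G) = (1+o(1))/|G(n,k)|$ by summation. The main obstacle will be the careful, quantitative application of Vu's inequality in the dynamic conditional setting of the algorithm: the suitable-pair count at step $i$ depends on the entire history, so one must choose an exposure order for the random choices in which the partial derivatives of the tracked polynomials remain small along the typical trajectory, and propagate the concentration over all $m$ steps without accumulated loss. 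A secondary difficulty is the endgame, where $i$ is close to $m$ and the fluid-limit approximation degrades; the standard remedy is to truncate the deterministic tracking at $i = (1-\delta)m$ and handle the final $\delta m$ steps separately via a second-moment or switching-based argument, showing that the tail cannot bias the output distribution by more than $o(1)$ in total variation.
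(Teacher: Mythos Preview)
The paper does not give its own proof of this theorem: it is quoted as a result of Kim and Vu~\cite{KV}, with the one-line gloss that they ``gave a new analysis of the Steger--Wormald algorithm using a concentration result of Vu~\cite{Vu}''. So there is nothing in the paper to compare your argument against line by line.

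That said, your sketch is a faithful outline of the Kim--Vu strategy. The reduction to showing that $\prod_i S_i(\omega)^{-1}$ is, for every fixed $G$, concentrated (over orderings $\omega$) around a value independent of $G$ is exactly the right framing, and identifying Vu's polynomial concentration inequality as the tool that pushes the range from $k=O(n^{1/28})$ up to $k\le n^{1/3-\varepsilon}$ is the key point the survey highlights. Two technical remarks. First, $S_i(\omega)$ depends only on which $i$ \emph{edges} of $G$ have been placed, not on the point assignment, so the relevant randomness is a uniformly random $i$-subset of $E(G)$; to apply Vu's inequality (stated for independent variables) one typically passes through an independent-time embedding (assign i.i.d.\ arrival times to edges) rather than working directly with the hypergeometric sampling. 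You should flag that step explicitly. Second, the endgame you mention is genuinely needed: once $m-i$ is small the term $\binom{2(m-i)}{2}$ no longer dominates the corrections, and the multiplicative error per step can blow up; Kim and Vu handle this by stopping the deterministic tracking when $O(n^{1-\delta})$ points remain and arguing separately that the remaining steps cannot distort the ratio by more than $1+o(1)$. Your proposal already anticipates both issues, so as a plan it is sound.
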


\subsection{The irregular case, and an almost-FPAUS}\label{s:BKS}

The Steger--Wormald algorithm was generalised to irregular degree
sequences in 2010 by Bayati, Kim and Saberi~\cite{BKS}. 
They stated their algorithm  in terms of graphs, not configurations,
and report failure (rather than restarting) if the procedure gets stuck.
The pseudocode for this algorithm, which is called \textsc{Procedure A} 
in~\cite{BKS}, is given in Figure~\ref{fig:BKS}.
Recall that $m=M/2 = \dfrac{1}{2}\, \sum_{j\in [n]} k_j$.
We write $\binom{[n]}{2}$ for the set of all unordered pairs of distinct 
vertices in $[n]$.

\begin{figure}[ht!]
\begin{center}
\hspace{\dimexpr-\fboxrule-\fboxsep\relax}\fbox{
\begin{minipage}{0.8\textwidth}
\begin{tabbing}
  \textsc{Bayati, Kim and Saberi: procedure A}\\
    X \= \kill
  \> XXXs \= \kill 
  \emph{Input:}\>\> graphical sequence $\kvec$\\
  \emph{Output:}\>\> element of $G(\kvec)$, or \emph{fail}\\
  \\
  repeat\\
   let $E:= \emptyset$  \quad (\emph{set of edges, initially empty})\\
   let $\widehat{\kvec}:= \kvec$ \quad (\emph{current degree deficit})\\
   let $a:= 1$\\
     repeat\\
     X \= \kill
     \>  choose an unordered pair of 
  distinct vertices $\{i,j\}\in \binom{[n]}{2}\setminus E$
  \\ \>\qquad
    with probability proportional to $p_{ij}:=\widehat{k}_i\widehat{k}_j\Big(1 - \frac{k_ik_j}{4m}\Big)$\\
   \>  let $a:= a\times p_{ij}$\\
  \> add $\{i,j\}$ to $E$ and reduce each of $\hat{k}_i$, $\hat{k}_j$ by 1\\
      until no more edges can be added to $E$\\
     if $|E|=m$ then\\
     \> output $G(P)$ and $N = (m!\, a)^{-1}$\\
     else \\
     \> report \emph{fail} and output $N=0$
\end{tabbing}
\end{minipage}}
\end{center}
\caption{Bayati, Kim and Saberi's asymptotically-uniform sampling algorithm}
\label{fig:BKS}
\end{figure}

This procedure is equivalent to the Steger--Wormald algorithm
when $\kvec$ is regular, since then
the factor $1-k_ik_j/(4m)$ does not introduce any bias.
For irregular degrees, this factor is chosen for the following reason.  
If two vertices of high degree are joined by an edge,
then this choice makes it more difficult for the process to complete
successfully.  In~\cite{BKS}, the authors show that
the bias from edge $\{i,j\}$ is roughly $\exp(k_ik_j/(4m))$, and hence
the probability $1-k_ik_j/(4m)\approx \exp(-k_ik_j/(4m))$ is designed
to cancel out this bias.  

Bayati, Kim and Saberi~\cite[Theorem~1 and Theorem~2]{BKS} proved
the following properties of \textsc{Procedure A}.

\begin{theorem} \emph{\cite{BKS}}\
\label{thm:BKS}
Let $\kvec$ be a graphical degree sequence and $\tau>0$ an arbitrary
constant.
\begin{itemize}
\item[\emph{(i)}] Suppose that 
$k_{\max} = O(m^{1/4-\tau})$. 
Then
\textsc{Procedure A} terminates successfully with probability $1-o(1)$
in expected runtime $O(k_{\max}\, m)$, 
and the probability $\Pr(G)$ that any given $G\in G(\kvec)$ is output satisfies 
$\Pr(G) = (1+o(1))\, |G(\kvec)|^{-1}$.
\item[\emph{(ii)}] Now suppose that $\kvec=(k,\ldots, k)$, where $k=O(n^{1/2-\tau})$.
Then \textsc{Procedure A} has output distribution which is within distance $o(1)$
from uniform in total variation distance.  
\end{itemize}
\end{theorem}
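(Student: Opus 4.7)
The plan is to compute $\Pr(G)$ for a fixed $G\in G(\kvec)$ by summing over all orderings $\sigma=(e_1,\ldots,e_m)$ of $E(G)$ the probability that Procedure~A produces the edges of $G$ in that order. Let $Z_t^\sigma$ denote the normalising constant (the sum of the weights $p_{ij}$ over all admissible pairs) at step~$t$ of the run producing $\sigma$, and write $e_t=\{i_t,j_t\}$. Then
\[
\Pr(G)=\sum_\sigma \prod_{t=1}^m \frac{\widehat{k}_{i_t}^{(t)}\widehat{k}_{j_t}^{(t)}\bigl(1-k_{i_t}k_{j_t}/(4m)\bigr)}{Z_t^\sigma}.
\]
Two of the factors are ordering-independent: the product of residual degrees telescopes to $\prod_v k_v!$, because vertex $v$ appears in exactly $k_v$ of the chosen edges with residual degree decreasing from $k_v$ down to $1$; and the $(1-k_ik_j/(4m))$ terms depend only on $E(G)$. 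Pulling these out gives
\[
\Pr(G)=\Bigl(\prod_v k_v!\Bigr)\Bigl(\prod_{\{i,j\}\in E(G)}\!\!(1-k_ik_j/(4m))\Bigr)\sum_\sigma \prod_{t=1}^m \frac{1}{Z_t^\sigma}.
\]

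The heart of the argument is to show that $Z_t^\sigma$ is tightly concentrated around a value depending only on $t$ and $\kvec$. Using $\sum_v \widehat{k}_v^{(t)}=M-2t$, one identifies a leading term $\tfrac{1}{2}(M-2t)^2$ whose error is controlled by $\sum_v (\widehat{k}_v^{(t)})^2$, by contributions from edges already placed, and by the damping correction of order $k_{\max}^2/m$. Under the hypothesis $k_{\max}=O(m^{1/4-\tau})$, a martingale / Kim--Vu polynomial concentration argument (in the spirit of the analysis of the Steger--Wormald algorithm) shows that, uniformly in~$t$ and with probability $1-o(1)$ over a uniformly random ordering $\sigma$ of $E(G)$, one has $Z_t^\sigma=\tfrac{1}{2}(M-2t)^2\bigl(1+O(n^{-\tau'})\bigr)$ for some $\tau'>0$. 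Establishing this concentration uniformly over all admissible $G$, and controlling the rare orderings where the process could in principle stall, is the main obstacle.

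Given the concentration, $\sum_\sigma \prod_t 1/Z_t^\sigma = (1+o(1))\, m!\, 2^m \prod_{t=1}^m (M-2t)^{-2}$. Combining with the McKay--Wormald asymptotic formula for $|G(\kvec)|$ (valid when $k_{\max}^2=o(M)$, which holds here) and observing that
\[
\prod_{\{i,j\}\in E(G)}\!\!\bigl(1-k_ik_j/(4m)\bigr)\approx \exp\!\Bigl(-\sum_{\{i,j\}\in E(G)}\!\!k_ik_j/(4m)\Bigr)
\]
cancels the exponential correction appearing in that formula --- which is precisely why the factor $1-k_ik_j/(4m)$ was inserted into $p_{ij}$ --- we conclude $\Pr(G)=(1+o(1))\,|G(\kvec)|^{-1}$ uniformly for $G\in G(\kvec)$. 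Summing over $G$ gives success probability $1-o(1)$; each step of the inner loop can be implemented in $O(k_{\max})$ time using acceptance--rejection over a table of vertices bucketed by residual degree, yielding the runtime bound in~(i). For part~(ii), the bound $k=O(n^{1/2-\tau})$ is weaker than the regular case of~(i), so one restricts attention to a typical subset $\mathcal{X}\subseteq G(n,k)$ of size $(1-o(1))|G(n,k)|$ on which the concentration estimate for $Z_t^\sigma$ still applies; since both the output distribution and the uniform distribution place mass $1-o(1)$ on $\mathcal{X}$ with matching densities there, the bound $d_{TV}(\sigma_n,\pi_n)\le \tfrac12\sum_{G\in G(n,k)}|\Pr(G)-|G(n,k)|^{-1}|=o(1)$ follows.
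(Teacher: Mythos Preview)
The paper does not give its own proof of this theorem: it is a survey, and the result is simply quoted as \cite[Theorem~1 and Theorem~2]{BKS} with no argument beyond the informal remark that the factor $1-k_ik_j/(4m)$ is designed to cancel a bias of roughly $\exp(k_ik_j/(4m))$. So there is no ``paper's proof'' to compare against here.

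That said, your outline is broadly faithful to the strategy of the original Bayati--Kim--Saberi paper: express $\Pr(G)$ as a sum over edge orderings, telescope the residual-degree factors to $\prod_v k_v!$, isolate the damping product $\prod_{\{i,j\}\in E(G)}(1-k_ik_j/(4m))$, and reduce everything to concentration of the partition functions $Z_t^\sigma$. Two points deserve care if you want this to be more than a sketch. First, the concentration you invoke is stated ``with probability $1-o(1)$ over a uniformly random ordering $\sigma$'', but $\Pr(G)$ is a \emph{sum} over all $m!$ orderings; you must show that the total weight of the bad orderings is $o(1)$ of the total, which requires a uniform (or at least summable) tail bound, not just a typical-case statement. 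Second, in part~(ii) the hypothesis $k=O(n^{1/2-\tau})$ is \emph{less} restrictive than the regular instance of~(i) (which amounts to $k=O(n^{1/3-\tau'})$), so the concentration is genuinely harder; you correctly note that one passes to a typical set $\mathcal{X}$, but the work lies in showing $|\mathcal{X}|=(1-o(1))|G(n,k)|$ and that $\Pr(G)$ is controlled on $\mathcal{X}$, which your sketch does not address.
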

Part (i) of this theorem extends the Kim--Vu result (Theorem~\ref{thm:KV}) to the
irregular case with essentially the same condition, since $m=kn$ when
$\kvec$ is $k$-regular.  Similarly, part (ii) of Theorem~\ref{thm:BKS}
generalises Theorem~\ref{thm:SW}(ii) to irregular degree sequences with much higher
maximum degree.  

\bigskip

When successful, \textsc{Procedure A} outputs a graph and a nonnegative 
number $N$.
The value of $N$ is not needed for asymptotically-uniform sampling, but is used
to give a \emph{fully-polynomial randomised approximation scheme} (FPRAS)
for approximating $|G(\kvec)|$, using a technique known as
\emph{sequential importance sampling} (SIS) which we outline below. 
Recall the definition of FPRAS from Definition~\ref{def:FPRAS}.

Let $\mathcal{N}(\kvec)$ be the set obtained by taking all possible
edge-labellings of graphs $G(\kvec)$, labelling the edges $e_1,\ldots, e_m$.
Then $|\mathcal{N}(\kvec)| = m!\, |G(\kvec)|$.  
We can slightly modify \textsc{Procedure A} so that it labels
the edges in the order that they were chosen.  This modified 
\textsc{Procedure A}
produces $H\in \mathcal{N}(\kvec)$ with probability
$P_A(H)$, denoted by $a$ in Figure~\ref{fig:BKS}.  The expected
value of $1/P_A(\cdot)$ for an element of $\mathcal{N}(\kvec)$
chosen according to the distribution $P_A$, is
\[ \sum_{H\in\mathcal{N}(\kvec)} \frac{1}{P_A(H)}\, P_A(H) = |\mathcal{N}(\kvec)| = m!\, |G(\kvec)|.\]
Therefore we can estimate $|G(\kvec)|$ by performing $r$ trials of
\textsc{Procedure~A} and taking the average of the resulting $r$ values
of $\big(m!\, P_A(H_i)\big)^{-1}$.  (Note that $m!\, P_A(H)$ is
precisely the value denoted $N$ in Figure~\ref{fig:BKS} when the
edge-labelled graph $H$ is output.)

Bayati et al. prove~\cite[Theorem 3]{BKS} that 
taking $r=O(\varepsilon^{-2})$ gives an FPRAS for estimating $|G(\kvec)|$.

\bigskip

Finally, 
Bayati et al.~\cite{BKS} showed how to adapt the SIS approach
to estimate $P_A(G)$ for (non-edge-labelled) $G\in G(\kvec)$.  This leads to
an algorithm which is \emph{almost} an FPAUS for $G(\kvec)$,
when $\kmax = O(m^{1/4- \tau})$ for some $\tau >0$.
The algorithm satisfies every condition from the definition of FPAUS
except for the runtime: 
in an FPAUS the runtime must be polynomial in $n$ and $\log(1/\varepsilon)$,
but the algorithm given in~\cite[Section 3]{BKS} has runtime which
is polynomial in $n$ and $1/\varepsilon$.
In a little more detail, the Bayati--Kim--Saberi algorithm proceeds as follows:  
\begin{itemize}
\item Given a graphical degree sequence $\kvec$ and
parameters $\varepsilon, \delta\in (0,1)$, the FPRAS is used to obtain
a sufficiently good estimate $X$ for $|G(\kvec)|$ (with high probability),
and a random graph $G\in G(\kvec)$ is obtained using \textsc{Procedure A}.
\item
Next, we need an estimate $P_G$ for the probability $P_A(G)$ 
that \textsc{Procedure A}
outputs $G$.  This probability is estimated  as follows:
repeatedly choose a random ordering of the edges of $G$, calculate
the probability that these edges were chosen \emph{in this order} during the
execution of \textsc{Procedure A}, and take the average of these probabilities 
(averaged over the different orders chosen).  
\item Finally, $G$ is returned as output of the almost-FPAUS with probability 
given by $\min\{ \frac{1}{c X P_G},\, 1\}$,
where $c$ is a universal constant independent of $\kvec$, $\varepsilon$,
$\delta$.
\end{itemize}

Bayati, Kim and Saberi~\cite[Remark~1]{BKS} 
state their main results can be adapted to give analogous
results for sampling bipartite graphs with given degrees,
under the same assumptions on the maximum degree. 
Independently, Blanchet~\cite{blanchet} used sequential importance sampling 
to give an FPRAS for counting bipartite graphs with given degrees,
when the maximum degree in one part of the vertex bipartition is constant,
while in the other part the maximum degree is $o(M^{1/2})$ and the sum of the
squares of the degrees is $O(M)$.
The arguments provided by
Bayati et al.~\cite{BKS} and Blanchet~\cite{blanchet} 
utilise concentration inequalities and Lyapunov inequalities, 
respectively.

\bigskip

Sequential importance sampling was used by
Chen, Diaconis, Holmes and Liu~\cite{chen} and
Blitzstein and Diaconis~\cite{BD} 
to sample graphs and bipartite graphs with given degrees, but
without fully rigorous analysis.  Sequential importance sampling algorithms
also appear in the physics literature, for example~\cite{DGKTB,KDGBT},
again without rigorous analysis.

While sequential importance sampling algorithms
may perform well in practice in many cases, Bez{\' a}kov{\' a} et al.~\cite{BSSV}
showed that these algorithms are provably slow in some cases.

\subsection{Other graph processes}\label{s:other-processes}

The property of having maximum degree at most $k$ can be rephrased
as the property of having no copy of the star $K_{1,k}$.  More generally,
for a fixed graph $H$,
the $H$-\emph{free process} proceeds from an empty graph by repeatedly
choosing a random edge and adding it to the graph if it does not form a copy
of $H$. See for example~\cite{BR,OT,W}. In particular, 
the \emph{triangle-free process} is very well
studied and has connections with Ramsey Theory~\cite{BK,ESW,PGM} (we do
not attempt to be comprehensive here as there is a large literature on this
topic).  The main focus in this area is extremal, as analysis of these processes
provides a lower bound on the maximum number
of edges possible in an $H$-free graph.  This often involves application of
the differential equations method, see~\cite{wormald-diff}.
Some pseudorandom properties of the output
have been proved for these processes, see for example~\cite{BK,PGM}.
However, it is not clear how far the output distribution varies from uniform,
and so these processes may not be suitable for almost-uniform sampling.

An exception is the work of Bayati, Montanari and Saberi~\cite{BMS},
who adapted the methods of~\cite{BKS} to analyse a sequential algorithm
for generating graphs with a given number of edges and girth greater
than $\ell$ (that is, no cycles of length at most $\ell$), where $\ell$ is a fixed
positive integer.  
As in~\cite{BKS,SW99}, the next edge is chosen non-uniformly, such that
the probability that an edge $e$ is selected is (approximately) proportional
to the number of successful completions of the subgraph $G'\cup\{e\}$,
where $G'$ denotes the current graph.  Bayati et al.~\cite{BMS} prove that the
output of their algorithm is asymptotically uniform after 
$m=O\big(n^{1+1/\left(2\ell(\ell+3)\right)}\big)$ edges have been added. 
The expected runtime of the algorithm is $O(n^2 m)$.

\section{Switchings-based algorithms}\label{s:switchings}

In the sampling algorithm based on the configuration model 
(Figure~\ref{fig:config}), a configuration $P$ is chosen from $\mathcal{P}(\kvec)$ 
uniformly at random, repeatedly, until the corresponding graph $G(P)$ 
is simple.  That is, if $G(P)$ contains any ``defect'' (in this case,
a loop or a repeated edge) then this choice is rejected and we
choose again.  In 1990, McKay and Wormald~\cite{MW90}
introduced a uniform sampling algorithm for $G(\kvec)$ 
which begins by choosing a random element of $\mathcal{P}(\kvec)$ and
rejecting it only if there are ``too many'' defects.
Once a configuration has been found with ``not too many'' defects,
operations called \emph{switchings} 
are applied, one by one, to reduce the number of defects
until a simple configuration is obtained.  To maintain a
uniform distribution, McKay and Wormald
introduce a carefully-chosen rejection probability at each step 
of the process.

We describe the McKay--Wormald algorithm in some detail, as this will
set the scene for the significant improvements introduced by Gao and
Wormald~\cite{GW}, to be discussed in Section~\ref{s:switching-recent}.
The structure of the McKay--Wormald algorithm is based on the
\emph{switching method}, introduced by McKay~\cite{McKay84}.  
The switching method is used in asymptotic enumeration to 
obtain good approximations for the cardinality of large combinatorial
sets, such as the set of all graphs with given degrees~\cite{McKW91},
when the maximum degree is not too large.

To make the phrase ``not too many defects'' precise,
recall that a \emph{loop} in a configuration
is a pair between two points from the same cell.
A \emph{triple pair} in a configuration is a set of three 
distinct non-loop pairs between the same two cells, and a \emph{double pair}
is a set of two distinct non-loop pairs between the same two cells.

\begin{Def}
%\begin{definition}
Let $M_2 = M_2(\kvec) = \sum_{j\in [n]} k_j(k_j-1)$. 
(Note that $M_2$ counts the number of ways
to choose an ordered pair of points from the same cell.)
Define $B_1 = M_2/M$ and $B_2 = (M_2/M)^2$.
Say that 
a configuration $P\in \mathcal{P}(\kvec)$ is \emph{good} if every cell contains
at most one loop, there are no triple pairs, $P$ contains at most
$B_1$ loops and at most $B_2$ double pairs. 
Write $\mathcal{P}^\ast(\kvec)$ for the set of all good configurations
in $\mathcal{P}^\ast$.
\label{def:good-set}
%\end{definition}
\end{Def}

Combining McKay and Wormald~\cite[Lemma~2 and Lemma~{3$^{\prime}$}]{MW90} 
with~\cite[Lemma~8]{AGW}, we can prove that if $\kmax^4 = O(M)$ then
 there exists a constant $c\in (0,1)$ such that
a uniformly-random element of $\mathcal{P}(\kvec)$ is good with probability
at least $c$.  

Next, let $\mathcal{C}_{\ell,d}$ be the set of all good
configurations with exactly $\ell$ loops and $d$ double pairs.
These sets form a partition of $\mathcal{P}^\ast(\kvec)$.
McKay and Wormald defined two switching operations, which we will
refer to as \emph{loop-switchings} and \emph{double-switchings}.
A loop-switching is used to reduce the number of loops by one, and a 
double-switching is used to reduce the number of double pairs by one.
These switchings are illustrated in Figure~\ref{fig:switchings}.
For example, in the loop-switching, a loop is selected together
with two other pairs, such that there are 5 distinct cells involved,
and performing the switching does not result in any repeated
pairs. The loop-switching transforms an element of $\mathcal{C}_{\ell,d}$
to an element of $\mathcal{C}_{\ell-1,d}$. To describe a loop-switching
we specify an ordered  6-tuple of points, and similarly a double-switching
is specified using an ordered 8-tuple of points.
\begin{figure}[ht!]
\begin{center}
\begin{tikzpicture}[scale=0.6]
\draw (0,0.5) circle (0.5); \draw (0,2) circle (0.5);
\draw (1,3.5) circle (0.8 and 0.5);
\draw (2,0.5) circle (0.5); \draw (2,2) circle (0.5);
\draw [fill] (0,0.5) circle (0.1); \draw [fill] (0,2) circle (0.1);
\draw [fill] (0.75,3.5) circle (0.1); \draw [fill] (1.25,3.5) circle (0.1);
\draw [fill] (2,0.5) circle (0.1); \draw [fill] (2,2) circle (0.1);
\draw [-] (0,0.5) -- (0,2) (2,0.5) -- (2,2);
\draw [-] plot [smooth] coordinates {(0.75,3.5) (0.5,4.3) (1.5,4.3) (1.25,3.5)};
\draw [->, line width=3pt] (3.5,2) -- (4.5,2);
%%%
\begin{scope}[shift={(6,0)}]
\draw (0,0.5) circle (0.5); \draw (0,2) circle (0.5);
\draw (1,3.5) circle (0.8 and 0.5);
\draw (2,0.5) circle (0.5); \draw (2,2) circle (0.5);
\draw [fill] (0,0.5) circle (0.1); \draw [fill] (0,2) circle (0.1);
\draw [fill] (0.75,3.5) circle (0.1); \draw [fill] (1.25,3.5) circle (0.1);
\draw [fill] (2,0.5) circle (0.1); \draw [fill] (2,2) circle (0.1);
\draw [-] (0,0.5) --  (2,0.5) (0,2) -- (0.75,3.5)  (1.25,3.5)-- (2,2);
\end{scope}
%%%%%%%%%%%%%%%%%%%
\begin{scope}[shift={(12,0)}]
\draw (0,0) circle (0.5); \draw (0,4) circle (0.5);
\draw (0,2) circle (0.5 and 0.8); \draw (2,2) circle (0.5 and 0.8);
\draw (2,0) circle (0.5); \draw (2,4) circle (0.5);
\draw [fill] (0,0) circle (0.1); \draw [fill] (0,4) circle (0.1);
\draw [fill] (0,1.75) circle (0.1); \draw [fill] (2,1.75) circle (0.1);
\draw [fill] (0,2.25) circle (0.1); \draw [fill] (2,2.25) circle (0.1);
\draw [fill] (2,0) circle (0.1); \draw [fill] (2,4) circle (0.1);
\draw [-] (0,0) -- (2,0)  (0,1.75) -- (2,1.75) (0,2.25) -- (2,2.25)  (0,4) -- (2,4);
\draw [->, line width=3pt] (3.5,2) -- (4.5,2);
\begin{scope}[shift={(6,0)}]
\draw (0,0) circle (0.5); \draw (0,4) circle (0.5);
\draw (0,2) circle (0.5 and 0.8); \draw (2,2) circle (0.5 and 0.8);
\draw (2,0) circle (0.5); \draw (2,4) circle (0.5);
\draw [fill] (0,0) circle (0.1); \draw [fill] (0,4) circle (0.1);
\draw [fill] (0,1.75) circle (0.1); \draw [fill] (2,1.75) circle (0.1);
\draw [fill] (0,2.25) circle (0.1); \draw [fill] (2,2.25) circle (0.1);
\draw [fill] (2,0) circle (0.1); \draw [fill] (2,4) circle (0.1);
\draw [-] (0,0) -- (0,1.75)  (0,2.25) -- (0,4) (2,2.25) -- (2,4)  (2,0) -- (2,1.75);
\end{scope}
\end{scope}
\end{tikzpicture}
\caption{A loop-switching (left) and a double-switching (right)}
\label{fig:switchings}
\end{center}
\end{figure}

It is possible to remove loops and double pairs using simpler switchings.
In fact, McKay used simpler switching operations 
(as illustrated in Figure~\ref{fig:switchings-Janson} below)
in a very early application~\cite{McKay85} of the switching method for asymptotic 
enumeration.  Subsequently, McKay and Wormald found that by using the slightly
more complicated switchings shown in Figure~\ref{fig:switchings},
they could obtain an asymptotic formula with vanishing error for a wider
range of degree sequences (with a weaker bound on the maximum degree, to
be precise), compared with the result of~\cite{McKay85}. The benefits
obtained by using the slightly more complicated switchings also hold
here in the algorithmic setting.

The first step of McKay and Wormald's algorithm is to repeatedly
choose a uniformly random element of $\mathcal{P}(\kvec)$ until
it is good.  At this point, the configuration $P$ is a uniformly
random element of $\mathcal{P}^\ast(\kvec)$.
Next, if $P$
contains a loop then a loop-switching is chosen uniformly at
random from the set of all available options (that is, from all possible
loop-switchings which could be applied to $P$). This switching is
rejected with some probability, otherwise it is accepted and performed.
The rejection probability is carefully chosen to ensure that if $P$ has
a uniform distribution over $\mathcal{C}_{\ell,d}$ then the resulting
configuration has a uniform distribution over $\mathcal{C}_{\ell-1,d}$.
If rejection occurs at any step then the entire algorithm restarts from the 
beginning.

When a configuration is reached with no loops, any double pairs are
removed one by one using double-switchings, again with a rejection probability
chosen to maintain uniformity.
Finally, when the current configuration $P$ belongs to $\mathcal{C}_{0,0}$
it is simple, and the algorithm outputs $G(P)$ and terminates.
The algorithm is given in pseudocode in Figure~\ref{fig:McKW}.

\begin{figure}[ht!]
\begin{center}
\hspace{\dimexpr-\fboxrule-\fboxsep\relax}\fbox{
\begin{minipage}{0.8\textwidth}
\begin{tabbing}
  \textsc{McKay--Wormald algorithm}\\
    X \= \kill
  \> XXXs \= \kill 
  \emph{Input:}\>\> graphical sequence $\kvec$\\
  \emph{Output:}\>\> element of $G(\kvec)$\\
  \\
  repeat\\
     X \= \kill
   \> choose $P\in\mathcal{P}(\kvec)$ u.a.r. \\
  until $P$ is good\\[0.5ex]
  \# \emph{remove loops}\\
  while $P$ has at least one loop \\
  \> obtain $P'$ from $P$ by performing a loop-switching chosen u.a.r. \\
  \> calculate the rejection probability $q_{\text{loop}}(P,P')$\\
  \> {\bf restart} with probability $q_{\text{loop}}(P,P')$; otherwise $P:= P'$\\[0.5ex]
  \# \emph{remove double pairs}\\
  while $P$ has at least one double pair\\
   \> obtain $P'$ from $P$ by performing a double-switching chosen u.a.r. \\
  \> calculate the rejection probability $q_{\text{double}}(P,P')$\\
  \> {\bf restart} with probability $q_{\text{double}}(P,P')$; otherwise $P:= P'$\\
  output $G(P)$
\end{tabbing}
\end{minipage}}
\caption{High-level description of the McKay--Wormald algorithm}
\label{fig:McKW}
\end{center}
\end{figure}

To complete the specification of the McKay--Wormald
algorithm, we must define the
rejection probabilities $q_{\text{loop}}$ and $q_{\text{double}}$.
For $P\in\mathcal{P}^\ast(\kvec)$, let $f(P,X)$ denote the number of 
possible $X$-switchings $P\mapsto P'$ which may be applied to $P$,
for $X\in \{\text{loop}, \text{double}\}$.  Similarly, let $b(P',X)$ be
the number of ways to produce $P'$ using an $X$-switching $P\mapsto P'$,
for all $P'\in\mathcal{P}^\ast(\kvec)$ and $X\in \{\text{loop},\text{double}\}$.
McKay and Wormald~\cite[Lemma~4]{MW90} gave expressions
$\underline{m}(\ell,d,\text{loop})$ and $\underline{m}(d,\text{double})$,
omitted here, such that for all $P\in\mathcal{C}_{\ell,d}$,
\begin{equation}
\label{loop-upper}
 f(P,\text{loop}) \leq \overline{m}(\ell,\text{loop})= 2\ell\, M^2,\qquad
  b(P,\text{loop}) \geq \underline{m}(\ell,d,\text{loop}) 
\end{equation}
and for all $P\in\mathcal{C}_{0,d}$,
\begin{equation} 
\label{double-upper}
 f(P,\text{double}) \leq \overline{m}(d,\text{double}) = 4d\, M^2,\qquad
  b(P,\text{double}) \geq \underline{m}(d,\text{double}).
\end{equation}
Regarding the lower bounds $\underline{m}(\cdot)$, 
we will only need the fact that they
are positive when $\kmax^4=O(M)$, for all $\ell \leq B_1$ and
all $d\leq B_2$.

The upper bounds in (\ref{loop-upper}) and (\ref{double-upper})
arise by counting the number of ways to choose a tuple of 
points (6 points for a loop-switching and 8 points for a double-switching)
which satisfy some constraints of the switching
and not others: typically the required pairs must be present,
but we do not check that all cells involved in the switching are 
distinct, or that the switching does not introduce any new loops or
repeated pairs.  For the lower bounds, we require an upper bound on
the number of bad choices of tuples, so that this may be subtracted.
When the degrees get too high, the number of bad choices increases
and there will be a lot of variation in this number, making the
estimates less precise.

The rejection probabilities  are defined by
\begin{equation}
\left.
\begin{aligned}
   q_{\text{loop}}(P,P') &= 1 - \frac{f(P,\text{loop})\, \underline{m}(\ell-1,d,\text{loop})}
                {\overline{m}(\ell,\text{loop})\, b(P',\text{loop})},\\
   q_{\text{double}}(P,P') &= 1 - \frac{f(P,\text{double})\, \underline{m}(d-1,\text{double})} {\overline{m}(d,\text{double})\, b(P',\text{double})}\\
\end{aligned}
\right\}
\label{rejection}
\end{equation}
for all $(P,P')\in \mathcal{C}_{\ell,d}\times\mathcal{C}_{\ell-1,d}$
which differ by a loop-switching, and 
all $(P,P')\in \mathcal{C}_{0,d}\times \mathcal{C}_{0,d-1}$ which differ
by a double-switching, respectively.  
These probabilities are well-defined if the lower bounds $\underline{m}(\cdot)$ are positive. 

\begin{lemma} \emph{\cite[Theorem 2]{MW90}}\,
If $\kmax^4 = O(M)$ then the output of the 
McKay--Wormald algorithm has uniform distribution over $G(\kvec)$.
\label{lem:MW-uniform}
\end{lemma}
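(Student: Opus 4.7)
The plan is to prove uniformity by induction on the sequence of configurations visited by the algorithm, tracking the distribution at each stage. First I would note that since the preprocessing loop keeps resampling uniformly from $\mathcal{P}(\kvec)$ until the configuration is good, its output $P$ is uniformly distributed on $\mathcal{P}^\ast(\kvec)$, and hence its conditional distribution on each block $\mathcal{C}_{\ell,d}$ of the partition $\mathcal{P}^\ast(\kvec)=\bigsqcup_{\ell,d}\mathcal{C}_{\ell,d}$ is uniform.

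The heart of the argument is the following claim: if the current configuration $P$ is uniformly distributed on $\mathcal{C}_{\ell,d}$ and we perform one loop-switching step with rejection probability $q_{\text{loop}}$, then conditional on not restarting, the new configuration $P'$ is uniformly distributed on $\mathcal{C}_{\ell-1,d}$. To see this, let $s(P,P')$ denote the number of distinct loop-switchings carrying $P$ to $P'$, so $\sum_{P'} s(P,P') = f(P,\text{loop})$ and $\sum_{P} s(P,P') = b(P',\text{loop})$. The one-step transition probability is
\[
\frac{s(P,P')}{f(P,\text{loop})}\bigl(1-q_{\text{loop}}(P,P')\bigr)
=\frac{s(P,P')\,\underline{m}(\ell-1,d,\text{loop})}{\overline{m}(\ell,\text{loop})\,b(P',\text{loop})}
\]
by the definition in (\ref{rejection}). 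Averaging against the uniform weights $1/|\mathcal{C}_{\ell,d}|$ and applying the double-counting identity $\sum_{P} s(P,P') = b(P',\text{loop})$ yields a value $\underline{m}(\ell-1,d,\text{loop})\big/\bigl(\overline{m}(\ell,\text{loop})\,|\mathcal{C}_{\ell,d}|\bigr)$ which is independent of $P'$, establishing uniformity on $\mathcal{C}_{\ell-1,d}$. An identical calculation handles the double-switching phase.

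Iterating the claim through all loop-removal steps and then all double-pair-removal steps, I conclude that conditional on one top-level attempt completing without restart, the terminal configuration is uniform on $\mathcal{C}_{0,0}$. Since every $G\in G(\kvec)$ corresponds to exactly $\prod_{j\in [n]} k_j!$ configurations in $\mathcal{C}_{0,0}$ (a number independent of $G$), the mapping $P\mapsto G(P)$ preserves uniformity, and the output is uniform on $G(\kvec)$. Finally, since each outer restart is an independent trial of the same algorithm, the unconditional output distribution coincides with the conditional distribution of a single trial given success, which is uniform.

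The main thing to verify is that $q_{\text{loop}}$ and $q_{\text{double}}$ really are probabilities, i.e.\ lie in $[0,1]$, and that a single trial succeeds with positive probability so that termination holds almost surely. The upper bound $q\le 1$ is immediate from $\underline{m}(\cdot)\ge 0$, while the lower bound $q\ge 0$ follows from the inequalities (\ref{loop-upper}) and (\ref{double-upper}) once the hypothesis $\kmax^4=O(M)$ is invoked; the same hypothesis is what McKay and Wormald use to guarantee that $\underline{m}(\cdot)$ is strictly positive throughout the relevant range $\ell\le B_1$, $d\le B_2$, together with the constant lower bound on $\Pr(P\text{ is good})$ cited after Definition~\ref{def:good-set}. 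The only conceptually subtle step is the combinatorial identity $\sum_{P} s(P,P') = b(P',\text{loop})$ that powers the averaging argument; everything else is bookkeeping.
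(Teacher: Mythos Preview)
Your proof is correct and follows essentially the same approach as the paper: both argue by induction that uniformity on $\mathcal{C}_{\ell,d}$ is preserved under an accepted switching step, via the same averaging calculation that collapses to a constant once the identity $\sum_P s(P,P')=b(P',\cdot)$ is applied. Your version is slightly more explicit (introducing $s(P,P')$, spelling out the $\prod_j k_j!$ correspondence and the restart argument, and checking that $q\in[0,1]$), but the core argument is identical to the paper's.
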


\begin{proof}
As mentioned earlier, the condition $\kmax^4=O(M)$ implies
that the lower bounds $\underline{m}(\cdot)$ are positive, and hence
the rejection probabilities are well-defined.
The initial good configuration $P$ is distributed uniformly over 
$\mathcal{P}^\ast(\kvec)$.  Hence, if the initial configuration
belongs to $\mathcal{C}_{\ell,d}$ then it has the uniform distribution over 
$\mathcal{C}_{\ell,d}$.
We prove by induction that if a switching $P\mapsto P'$
is accepted and $P$ has the uniform distribution over some set $\mathcal{C}_{\ell,d}$,
then $P'$ has the uniform distribution over the codomain of that switching.
(The codomain is $\mathcal{C}_{\ell-1,d}$ if the switching is a loop-switching, 
while for a double-switching $\ell=0$ and the codomain is $\mathcal{C}_{0,d-1}$.) 
For ease of notation we prove this for double-switchings, and note
that the same argument holds for loops-switchings.
For all $P'\in\mathcal{C}_{0,d-1}$, the probability that the proposed switching
is not rejected and results in $P'$ is given by
\[
\Pr(P') = \sum_{\substack{P\in\mathcal{C}_{0,d}\\P\mapsto P'}} 
  \frac{\Pr(P)}{f(P,\text{double})}\, \big(1-q_{\text{double}}(P,P')\big).\]
The sum is over all configurations $P\in\mathcal{C}_{0,d}$ such that $P'$
can be obtained from $P$ using a double-switching, and the factor 
$1/f(P,\text{double})$ is the probability that this particular 
double-switching is chosen to be applied to $P$. 
Substituting the value of the rejection probability from (\ref{rejection}),
and using the assumption
that $P$ is uniformly distributed over $\mathcal{C}_{0,d}$, we find that 
\[ \Pr(P') = \frac{\underline{m}(d-1,\text{double})}{|\mathcal{C}_{0,d}|\, \overline{m}(d,\text{double})}\,
    \sum_{\substack{P\in\mathcal{C}_{0,d}\\P\mapsto P'}} \frac{1}{b(P',\text{double})}.\]
But the number of summands is precisely $b(P',\text{double})$, so the sum evaluates to~1 and
we conclude that
\[ \Pr(P') = \frac{\underline{m}(d-1,\text{double})}{|\mathcal{C}_{0,d}|\, \overline{m}(d,\text{double})}.
\]
This depends only on $d$, and not on the particular configuration 
$P'\in\mathcal{C}_{0,d-1}$.  Hence every element of $\mathcal{C}_{0,d-1}$ is
equally likely to be produced after the double-switching, proving that
the uniform distribution is maintained after each accepted switching step.
Thus, by induction, at the end of the algorithm $P$ is a uniformly random 
element of $\mathcal{C}_{0,0}$. It follows that $G(P)$ is a uniformly random
element of $G(\kvec)$, as claimed.
\end{proof}

The previous result shows that the output of the McKay--Wormald algorithm 
is always correct.  But what conditions on $\kvec$ are needed
for the algorithm to be efficient?
If the degrees become too large then it becomes unlikely that
the randomly-chosen initial configuration is good, and there will be too much
variation in the parameters $f(P,X)$, $b(P',X)$, leading to large rejection 
probabilities.

\begin{theorem} \emph{\cite[Theorem 3]{MW90}}\,
Suppose that $\kvec$ is a graphical degree sequence with $\kmax = O(M^{1/4})$.
The McKay--Wormald algorithm for sampling from $G(\kvec)$
can be implemented so that it
has expected runtime $O(\kmax^2 M^2) = O(\kmax^4 n^2)$.
If $\kvec=(k,k\ldots, k)$ is regular then there is an implementation
with expected runtime $O(k^3 n)$, under the assumption that $k = O(n^{1/3})$.
\label{thm:MW-runtime}
\end{theorem}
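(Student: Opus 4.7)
The plan is to split the analysis into two factors: the expected number of times the outer loop restarts, and the work performed during a single attempt. Because the rejection probabilities were tuned so that Lemma \ref{lem:MW-uniform} already guarantees a uniform output distribution, only the magnitudes of the various rejection probabilities matter for runtime analysis.

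I would begin with the initial phase. The remark following Definition \ref{def:good-set} states that under $\kmax^4 = O(M)$, a uniformly random configuration is good with some constant probability $c$, so in expected $O(M)$ time (each trial costs $O(M)$, and $O(1)$ trials suffice) the algorithm produces a good initial configuration. Moreover, because a good configuration contains at most $B_1$ loops and at most $B_2$ double pairs, the subsequent phase performs at most
\[
B_1 + B_2 \leq \kmax + \kmax^2 = O(\kmax^2)
\]
switching steps (using $M_2 \leq \kmax M$).

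The crux is to bound the rejection probability at each switching step. I would use the combinatorial estimates of McKay and Wormald to show that, for $P \in \mathcal{C}_{\ell, d}$ and any valid switching $P \mapsto P'$, the quantities $f(P, X)$ and $b(P', X)$ both lie within a multiplicative factor $1 + O(\kmax^2/M)$ of $\overline{m}$ and $\underline{m}$ respectively; the discrepancy is controlled by counting the tuples of points that are "blocked" by having coincident cells or by producing a forbidden new loop or repeated pair. Substituting into (\ref{rejection}) yields $q_{\text{loop}}, q_{\text{double}} = O(\kmax^2/M)$ uniformly over all applicable $(P, P')$. Consequently, the probability that an attempt completes without restart is at least
\[
\bigl(1 - O(\kmax^2/M)\bigr)^{O(\kmax^2)} = \exp\bigl(-O(\kmax^4/M)\bigr) = \Omega(1),
\]
where the final equality is precisely the content of the hypothesis $\kmax = O(M^{1/4})$. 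Hence the expected number of attempts is $O(1)$.

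For the per-attempt cost, each switching step requires selecting a candidate tuple of points ($O(1)$ random choices), checking validity, and evaluating $f(P, X)$ and $b(P', X)$; direct combinatorial evaluation of the latter, by enumerating pairs of pairs that participate in a switching subject to the structural conditions, takes $O(M^2)$ time. Multiplying by the $O(\kmax^2)$ switching steps per attempt and the $O(1)$ expected number of attempts yields the total expected runtime $O(\kmax^2 M^2)$. In the regular case with $k = O(n^{1/3})$, the degree symmetry collapses the sums that appear in $f$ and $b$, so that per-step evaluation can be maintained incrementally in $O(M) = O(kn)$ time rather than $O(M^2)$; combined with the $O(k^2)$ bound on the number of switching steps, this gives the stronger $O(k^3 n)$ bound. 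The main obstacle is establishing the quantitative estimates $f \approx \overline{m}$ and $b \approx \underline{m}$ to the required precision $O(\kmax^2/M)$, which requires a careful count of the blocked tuples and is exactly the step where the bound $\kmax^4 = O(M)$ is indispensable.
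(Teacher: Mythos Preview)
Your high-level structure matches the paper's: obtain a good initial configuration in expected $O(M)$ time, bound the number of switching steps by $B_1+B_2=O(\kmax^2)$, and show that the total rejection probability over one attempt is bounded away from~1 under the hypothesis $\kmax^4=O(M)$, so that the expected number of restarts is $O(1)$.

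Where you diverge from the paper is in the implementation of a single switching step. You propose to compute both $f(P,X)$ and $b(P',X)$ from scratch at each step in time $O(M^2)$. The paper's approach is different in two respects. First, $f(P,X)$ is \emph{never computed}: one samples a candidate tuple from the size-$\overline{m}$ superset (choose a loop or double pair at random, then two further random pairs), and the probability that the resulting tuple is valid is exactly $f(P,X)/\overline{m}(x,X)$. Thus the f-rejection is performed implicitly by this sampling-and-validity-check, with no need to know $f$. Second, $b(P',X)$ \emph{is} computed exactly, but not by direct enumeration at each step; instead the algorithm maintains a small collection of subgraph counts (paths and related structures) that determine $b$, initialises them once, and updates them incrementally after each accepted switching. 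It is this amortised bookkeeping, together with the avoidance of computing $f$, that yields the stated $O(\kmax^2 M^2)$ bound.

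Your claim that direct evaluation of $f$ and $b$ costs $O(M^2)$ per step is the weak point. For the double-switching, naively enumerating reverse-switchings for $b(P',\text{double})$ involves iterating over ordered pairs of same-cell points on each side of the structure, which is governed by $M_2^2$ rather than $M^2$; this can exceed $O(M^2)$ by a factor of $\kmax^2$. So without the paper's two implementation ideas, your per-step cost estimate is not justified, and the final bound does not follow. For the regular case, the paper also notes that the $O(k^3 n)$ bound requires a substantially more intricate implementation than the general one; your one-line appeal to degree symmetry does not capture this.
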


\begin{proof} (Sketch.)\
Recall that a randomly chosen element of $\mathcal{P}(\kvec)$ is
good with probability at $c$ when $\kmax^4=O(M)$, for some
constant $c\in (0,1)$.   
Hence it takes 
expected time $O(M)$ to produce a uniformly-random element of
$\mathcal{P}^\ast(\kvec)$. 
McKay and Wormald prove that the probability that there is no restart during
the loop-switchings and doubles-switchings is $1-o(1)$ when $\kmax^4=o(M)$,
and is bounded below by a constant when $\kmax^4=\Theta(M)$.  

It remains to
consider the cost of performing the switching operations.
Suppose that at some point in the execution of the algorithm,
the current configuration is $P$.
To choose a potential switching of the appropriate type, 
we can select the points of a randomly chosen 
loop or double pair, in a random order, and then choose the points of two
other pairs, in a random order.   The number of ways to make this selection is
\emph{exactly} given by the relevant upper bound 
from (\ref{loop-upper}) or (\ref{double-upper}),
and the probability that the result $P'$ of this switching is a valid
configuration in the codomain (that is, only the chosen loop/double pair
has been removed, and no additional defects have been introduced) is
exactly $f(P,X)/\overline{m}(x,X)$, where $(x,X) \in\{(\ell,\text{loop}),\,
(d,\text{double})\}$.  This means that the value of $f(P,X)$
does not need to be calculated.  

However, we do need to calculate the value
$b(P',X)$ precisely for the proposed switching $P\mapsto P'$, 
in order to restart with the correct probability.
This can be done by maintaining some information about numbers of 
small structures
in the configuration, which is initialised before any switchings
have been performed, and updated after each switching operation. 
The initialisation
takes runtime $O(\kmax^2 M_2^2)$, which dominates the expected
time required for the updates from each switching.
See~\cite[Theorem 3]{MW90} for more details.  

In the $k$-regular case a further efficiency is possible, 
but with a much more complicated implementation,
as explained in the proof of~\cite[Theorem 4]{MW90}.
\end{proof}

McKay and Wormald also explained how to modify their algorithm
to sample bipartite graphs with given degrees, uniformly
at random, see~\cite[Section 6]{MW90}.  The expected runtime
of the uniform sampler for bipartite graphs with given degrees
is $O(\kmax^4 n^2)$ when $\kmax = O(M^{1/4})$, where (as usual)
$\kmax$ denotes the maximum degree.

\subsection{Improvements and extensions}\label{s:switching-recent}

Starting from the McKay--Wormald algorithm, Gao and Wormald~\cite{GW} 
introduced several new ideas which culminated in an 
algorithm for uniformly sampling $k$-regular graphs, which they
called \textsc{REG}.  The expected runtime of the Gao--Wormald algorithm
is $O(k^3 n)$ when $k=o(\sqrt{n})$.  This is a significant increase
in the allowable range of $k$ compared with the McKay--Wormald algorithm.

In order to handle degrees beyond $O(n^{1/3})$, Gao and Wormald must
deal with triple pairs, as well as loops and double pairs.  So the set
of good configurations is redefined to allow ``not too many'' triple pairs
(but still ruling out any pairs of multiplicity four or higher)
and a new switching phase is performed to remove triple pairs one by one.
However, it turns out that triple pairs are easily handled.  
In fact, the first two phases of the algorithms (removing loops and
removing triples, respectively) proceed as in the McKay--Wormald
algorithm.  As the double pairs are the most numerous ``defect'',
new ideas are required in phase 3, where double pairs must be removed.

The innovations introduced by Gao and Wormald in~\cite{GW} are
designed to reduce the probability of a rejection during a 
double-switching step.
These ideas are described in a very general setting in~\cite{GW}, for
ease of applications to other problems.  Here, we given an overview of these
ideas in the context of the double-switching, performed on configurations
which have no loops, no triple pairs and  at most $B_2$ double pairs, where
$B_2=\lfloor (1+\gamma)(k-1)^2/4\rfloor$
for some sufficiently small constant $\gamma>0$.

Since we now discuss only double-switchings,
we drop ``double'' from our notation.
Write $\mathcal{C}_d$ for the set of good configurations with no
loops, no triple pairs and exactly $d$ double pairs.
Observe from (\ref{rejection}) that the probability that a proposed
switching $P\mapsto P'$
is \emph{not} rejected is a product of a factor $f(P)/\overline{m}(d)$
which depends only on $P\in\mathcal{C}_{d}$, 
and a factor $\underline{m}(d-1)/b(P')$
which depends only on $P'\in\mathcal{C}_{d-1}$.
Gao and Wormald aim to reduce both the probability of \emph{forward rejection},
or \emph{f-rejection} (which depends only on $P$), and the probability
of \emph{backwards rejection}, or \emph{b-rejection}, which depends only
on $P'$.

To reduce the likelihood of an f-rejection, Gao and Wormald allow some 
double-switchings which would be rejected in the McKay--Wormald algorithm.
The aim is to bring the values of $f(P)$ closer to the upper bound
$\overline{m}(d)$.
Specifically, a double-switching which introduces exactly one new
double pair is allowed, as illustrated in Figure~\ref{fig:double-classB}.  
The original double-switching, shown on the right of Figure~\ref{fig:switchings}, is known as a type I, class A switching, while switching in
Figure~\ref{fig:double-classB} is a type I, class B switching.

\begin{figure}[ht!]
\begin{center}
\begin{tikzpicture}[scale=0.7]
\draw (0,0) circle (0.5); \draw (4,0) circle (0.5);
\draw (2,0) circle (0.8 and 0.5); \draw (2,2.1) circle (0.8 and 0.7);
\draw (0,2.2) circle (0.5 and 0.7); \draw (4,2) circle (0.5);
\draw [fill] (0,0) circle (0.1); \draw [fill] (4,0) circle (0.1);
\draw [fill] (2,2.4) circle (0.1); \draw [fill] (0,2.4) circle (0.1);
\draw [fill] (1.75,0) circle (0.1); \draw [fill] (1.75,2) circle (0.1);
\draw [fill] (2.25,0) circle (0.1); \draw [fill] (2.25,2) circle (0.1);
\draw [fill] (0,2) circle (0.1); \draw [fill] (4,2) circle (0.1);
\draw [-] (0,0) -- (0,2)  (1.75,0) -- (1.75,2) (2.25,0) -- (2.25,2)  (4,0) -- (4,2) (0,2.4) -- (2,2.4);
\draw [->, line width=3pt] (5.5,1) -- (6.5,1);
\begin{scope}[shift={(8,0)}]
\draw (0,0) circle (0.5); \draw (4,0) circle (0.5);
\draw (2,0) circle (0.8 and 0.5); \draw (2,2.1) circle (0.8 and 0.7);
\draw [fill] (2,2.4) circle (0.1); \draw [fill] (0,2.4) circle (0.1);
\draw (0,2.2) circle (0.5 and 0.7); \draw (4,2) circle (0.5);
\draw [fill] (0,0) circle (0.1); \draw [fill] (4,0) circle (0.1);
\draw [fill] (1.75,0) circle (0.1); \draw [fill] (1.75,2) circle (0.1);
\draw [fill] (2.25,0) circle (0.1); \draw [fill] (2.25,2) circle (0.1);
\draw [fill] (0,2) circle (0.1); \draw [fill] (4,2) circle (0.1);
\draw [-] (0,0) -- (1.75,0)  (2.25,0) -- (4,0) (2.25,2) -- (4,2)  (0,2) -- (1.75,2) (0,2.4) -- (2,2.4);
\end{scope}
\end{tikzpicture}
\caption{A type I, class B double-switching $\mathcal{C}_d\to\mathcal{C}_d$}
\label{fig:double-classB}
\end{center}
\end{figure}
Next, Gao and Wormald observed that some configurations in 
$\mathcal{C}_{d-1}$ are less likely to be produced by a
(type I) double-switching than others.  These configurations
bring down the lower bound $\underline{m}(d-1)$ and hence
increase the b-rejection probability for every element of $\mathcal{C}_{d-1}$.
For this reason, Gao and Wormald introduced another new switching, called a 
\emph{type II} switching, which actually increases the number of double pairs
by one, as shown in Figure~\ref{fig:double-typeII}.  All type II switchings
have class B.

\begin{figure}[ht!]
\begin{center}
\begin{tikzpicture}[scale=0.7]
\draw (0,0) circle (0.5); \draw (4,0) circle (0.5);
\draw (2,0) circle (0.8 and 0.5); \draw (2,2.2) circle (0.7 and 0.7);
\draw (0,2.2) circle (0.5 and 0.7); \draw (4,2.2) circle (0.5 and 0.7);
\draw [fill] (0,0) circle (0.1); \draw [fill] (4,0) circle (0.1);
\draw [fill] (1.75,2.4) circle (0.1); \draw [fill] (2.25,2.4) circle (0.1);
\draw [fill] (0,2.4) circle (0.1); \draw [fill] (4,2.4) circle (0.1); 
\draw [fill] (1.75,0) circle (0.1); \draw [fill] (1.75,2) circle (0.1);
\draw [fill] (2.25,0) circle (0.1); \draw [fill] (2.25,2) circle (0.1);
\draw [fill] (0,2) circle (0.1); \draw [fill] (4,2) circle (0.1);
\draw [-] (0,0) -- (0,2)  (1.75,0) -- (1.75,2) (2.25,0) -- (2.25,2)  (4,0) -- (4,2) (0,2.4) -- (1.75,2.4) (2.25,2.4) -- (4,2.4);
\draw [->, line width=3pt] (5.5,1) -- (6.5,1);
\begin{scope}[shift={(8,0)}]
\draw (0,0) circle (0.5); \draw (4,0) circle (0.5);
\draw (2,0) circle (0.8 and 0.5); \draw (2,2.2) circle (0.7 and 0.7);
\draw [fill] (1.75,2.4) circle (0.1); \draw [fill] (2.25,2.4) circle (0.1); 
\draw [fill] (0,2.4) circle (0.1); \draw [fill] (4,2.4) circle (0.1); 
\draw (0,2.2) circle (0.5 and 0.7); \draw (4,2.2) circle (0.5 and 0.7);
\draw [fill] (0,0) circle (0.1); \draw [fill] (4,0) circle (0.1);
\draw [fill] (1.75,0) circle (0.1); \draw [fill] (1.75,2) circle (0.1);
\draw [fill] (2.25,0) circle (0.1); \draw [fill] (2.25,2) circle (0.1);
\draw [fill] (0,2) circle (0.1); \draw [fill] (4,2) circle (0.1);
\draw [-] (0,0) -- (1.75,0)  (2.25,0) -- (4,0) (2.25,2) -- (4,2)  (0,2) -- (1.75,2) (0,2.4) -- (1.75,2.4) (2.25,2.4) -- (4,2.4);
\end{scope}
\end{tikzpicture}
\caption{A type II, class B double-switching $\mathcal{C}_d\to\mathcal{C}_{d+1}$}
\label{fig:double-typeII}
\end{center}
\end{figure}

To perform a switching step, from current configuration $P\in\mathcal{C}_{d}$,
first the type $\tau\in\{ I,II\}$ of switching is chosen, according
to a probability distribution $\rho$ (with a small restart probability
if no type is chosen).  Next, a type $\tau$ switching 
$P\mapsto P'$ is proposed, chosen randomly from all $f_{\tau}(P)$ type $\tau$
switchings available in $P$.  The f-rejection probability is
$1-f_\tau(P)/\overline{m}_\tau(d)$, where $\overline{m}_\tau(d)$ is an 
upper bound on $f_\tau(P)$ over all $P\in\mathcal{C}_{d}$.
Let $d'\in \{0,\ldots, B_2\}$ be the unique index
such that $P'\in\mathcal{C}_{d'}$.
The class $\alpha\in\{A,B\}$ of the proposed switching $P\mapsto P'$
can now be observed, and the b-rejection probability is 
$1-b_\alpha(P')/\underline{m}_\alpha(d')$, where 
$\underline{m}_\alpha(d')$ is a lower bound on $b_\alpha(P')$
over all $P'\in\mathcal{C}_{d'}$.
If there is no f-rejection or b-rejection then the proposed
switching is accepted and $P'$ becomes the current configuration.
As soon as an element $P\in\mathcal{C}_0$ is reached, the algorithm
stops with output $G(P)$.
Here we see that the f-rejection probability depends on $P$ and the chosen
type, while the b-rejection probability depends on the outcome $P'$ and
the class $\alpha$ of the proposed switching from $P$.

Rather than maintaining a uniform distribution after each switching,
as in the McKay--Wormald algorithm, the goal in the Gao--Wormald algorithm
is to ensure that the expected number of visits to each 
configuration $P\in\mathcal{C}_d$, over the course of (the doubles-reducing
phase of) the algorithm, 
depends only on $d$ and is independent of $P$.  In particular,
this guarantees that each element of $\mathcal{C}_0$ is equally likely,
and hence the output of the algorithm is a uniformly random element of $G(n,k)$.

In~\cite[Lemma 6 and Lemma 8]{GW}, Gao and Wormald gave expressions for
$\overline{m}_\tau(d)$, $\underline{m}_\alpha(d)$ and 
showed how to choose values for $\rho_\tau(d)$
satisfying a certain system of linear equations.  By~\cite[Lemma~5]{GW},
 when the parameters $\rho_{\tau}(d)$ satisfy
this system of equations then
the last element visited by the algorithm is distributed uniformly at
random from $\mathcal{C}_0$, assuming that no rejection occurs.
Furthermore, the solution can be chosen to satisfy 
$\rho_I(d) = 1-\varepsilon>0$ for all $1\leq d\leq B_2$,
where $\varepsilon = O(k^2/n^2)$.
Since $k=o(n^{1/2})$ this means that almost every step is a
``standard'' double-switching (type I, class A) switching.

Having set these parameter values, the algorithm is completely specified.
It remains to show that the probability of rejection during the course
of the algorithm is $o(1)$, which requires careful analysis.
The runtime analysis is very similar to Theorem~\ref{thm:MW-runtime},
resulting in the following.

\begin{theorem}
If $1\leq k=o(\sqrt{n})$ then the Gao--Wormald algorithm \textsc{REG}
is a uniform sampler from $G(n,k)$, and can be implemented
 with expected runtime $O(k^3 n)$.
\label{thm:GW}
\end{theorem}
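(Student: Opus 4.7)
The plan is to break the analysis into three phases, mirroring the structure of the algorithm: (i) obtaining a uniformly random good configuration, (ii) the loop- and triple-removal phases, and (iii) the double-removal phase. Phase (i) and the loop/triple sub-phases of (ii) can be handled essentially as in the McKay--Wormald analysis (Lemmas~\ref{lem:MW-uniform} and Theorem~\ref{thm:MW-runtime}), since under the new (weaker) hypothesis $k=o(\sqrt{n})$ we still have $\kmax^4=o(M)$, so a random configuration is good with probability bounded below by a constant, and the loop/triple switchings introduce only $o(1)$ total rejection probability. Thus these phases contribute $O(M)$ to the expected runtime and produce, conditional on no restart, a uniformly random element of the appropriate $\mathcal{C}_d$.

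For the correctness part of phase (iii), I would follow the strategy outlined in the excerpt. Fix the solution $(\rho_\tau(d))$ to the system of linear equations from~\cite[Lemma~6]{GW}, which by~\cite[Lemma~5]{GW} has the property that the expected number of visits to any $P\in\mathcal{C}_d$, over the double-removal phase started from a uniformly random element of $\mathcal{C}_{B_2}\cup\cdots\cup\mathcal{C}_0$ (ignoring restarts), depends only on $d$. The proof of uniformity then reduces to induction on $d$ (downwards from $B_2$): at any step, the probability of moving from $P\in\mathcal{C}_d$ to $P'\in\mathcal{C}_{d'}$ by a type-$\tau$, class-$\alpha$ switching is a product of a factor $f_\tau(P)/\overline{m}_\tau(d)$ depending only on $(P,\tau)$ and a factor $b_\alpha(P')/\underline{m}_\alpha(d')$ depending only on $(P',\alpha)$; summing over all $P\mapsto P'$ produces exactly $b_\alpha(P')\cdot\underline{m}_\alpha(d')/\underline{m}_\alpha(d')=b_\alpha(P')$, cancelling the $b_\alpha(P')$ in the rejection probability, so the probability of visiting $P'$ depends only on $d'$, not on $P'$. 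Taking $d=0$ then gives uniformity over $\mathcal{C}_0$, hence over $G(n,k)$.

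The main obstacle is controlling the expected total rejection probability during phase (iii) for $k$ all the way up to $o(\sqrt{n})$. Here the key quantitative input is that for the chosen solution one has $\rho_I(d)=1-\varepsilon$ with $\varepsilon=O(k^2/n^2)$, so that with probability $1-o(1)$ the algorithm only performs standard type-I, class-A switchings; the contribution from type-II and class-B switchings must be shown to be $o(1)$ overall despite there being $\Theta(k^2)$ switching steps. This requires sharp estimates on the number of ``bad'' 8-tuples of points that could be selected in a double-switching (tuples meeting two cells, creating a new loop, repeated pair, or triple) and on the discrepancy $\overline{m}_\tau(d)-f_\tau(P)$ and $b_\alpha(P')-\underline{m}_\alpha(d')$. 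Since $\kmax^4=o(M^2)$, the number of such bad tuples is of lower order than $\overline{m}_\tau(d)$, and combining these estimates with a union bound over the at most $B_2=O(k^2)$ double-switching steps gives an $o(1)$ bound on the total f- and b-rejection probability; this, together with the analogous (easier) bounds for loops and triples, yields overall success probability $1-o(1)$.

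For the runtime claim, I would follow the implementation used in Theorem~\ref{thm:MW-runtime}: choose each candidate switching by selecting a random defect (loop, triple, or double pair) and then two further pairs in a prescribed random order, so that $f_\tau(P)$ never needs to be computed, only the quantity $b_\alpha(P')$ for the proposed $P'$. Maintain auxiliary data structures that count the small substructures needed to evaluate $b_\alpha(P')$ in amortised $O(k)$ time per switching, after an initialisation cost of $O(\kmax^2 M_2^2/M)=O(k^3n)$ under the current hypotheses. Since the expected number of switchings is $O(k^2)$ and the expected number of restarts is $1+o(1)$, the total expected runtime is dominated by the initialisation, giving the claimed $O(k^3n)$ bound.
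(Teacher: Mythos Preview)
Your proposal contains a significant error in the very first step: you assert that under the hypothesis $k=o(\sqrt{n})$ ``we still have $\kmax^4=o(M)$'', but this is false. In the $k$-regular case $M=kn$, so $\kmax^4=o(M)$ is equivalent to $k^3=o(n)$, i.e.\ $k=o(n^{1/3})$. The hypothesis $k=o(\sqrt{n})$ allows $k$ well beyond $n^{1/3}$, and this gap between $n^{1/3}$ and $n^{1/2}$ is precisely the regime in which the McKay--Wormald analysis breaks down and the Gao--Wormald innovations are required. In particular, for $k\gg n^{1/3}$ a uniformly random configuration will typically contain triple pairs, so it cannot be ``good'' in the sense of Definition~\ref{def:good-set}; this is why \textsc{REG} redefines ``good'' to permit a bounded number of triple pairs and inserts a separate triple-removal phase. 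Consequently your plan to handle phases~(i) and~(ii) ``essentially as in the McKay--Wormald analysis'' does not go through as written: the probability that a random configuration is good (in the redefined sense), and the smallness of the loop- and triple-rejection probabilities, need fresh estimates valid for all $k=o(\sqrt{n})$, not a direct appeal to Lemma~\ref{lem:MW-uniform} and Theorem~\ref{thm:MW-runtime}, whose hypotheses you have not verified.

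A secondary arithmetic issue: your claimed initialisation cost $O(\kmax^2 M_2^2/M)$ evaluates, for $k$-regular degrees with $M_2\approx nk^2$ and $M=nk$, to $\Theta(k^5 n)$, not $O(k^3 n)$. Note that the paper itself does not supply a self-contained proof of this theorem; it sketches the algorithm, states that establishing the $o(1)$ total rejection probability ``requires careful analysis'' (deferring to~\cite{GW}), and remarks that the runtime analysis ``is very similar to Theorem~\ref{thm:MW-runtime}''. Your outline of phase~(iii) is broadly in line with that description, but the overall argument cannot stand on the mistaken premise about $\kmax^4$ versus $M$.
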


Recent work of Armand, Gao and Wormald~\cite{AGW} which gives an even
more efficient uniform sampler for the same range of $k$ is discussed 
in Section~\ref{s:incremental}.

\bigskip

A $k$-\emph{factor} is a $k$-regular spanning subgraph of a given
graph.  Gao and Greenhill~\cite{GG-dFactor} used the Gao--Wormald
framework to give algorithms for sampling $k$-factors of a given
graph $H_n$ with $n$ vertices, under various conditions on $k$
and the maximum degree $\Delta$ of the complement $\overline{H}_n$ of $H_n$.
The edges of the complement of $H_n$ can be thought of as
``forbidden edges'', and we want to sample $k$-regular graphs
with no forbidden edges.  

\begin{theorem} \emph{\cite[Theorem~1.1 and 1.2]{GG-dFactor}}\
\label{thm:GG-dFactor}
Let $H_n$ be a graph on $n$ vertices such that $\overline{H}_n$ has
maximum degree $\Delta$. 
\begin{itemize}
\item There is an algorithm which  produces a uniformly random 
$k$-factor of $H_n$,  and has expected runtime $O((k+\Delta)^3 n)$
if $(k+\Delta)k\Delta = o(n)$. 
\item
Now suppose that $H_n$ is $(n-\Delta-1)$-regular.
There is an algorithm which generates a uniformly random $k$-factor of $H_n$ and 
has expected runtime
\[ O\big( (k+\Delta)^4 (n + \Delta)^3 + (k + \Delta)^8\, k^2\Delta^2/n
     + (k + \Delta)^{10}\, k^2 \Delta^3/n^2\big)\]
if $k^2 + \Delta^2 = o(n)$. 
\end{itemize}
\end{theorem}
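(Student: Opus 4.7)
The plan is to adapt the Gao--Wormald framework underlying Theorem~\ref{thm:GW} to the setting of $k$-factors in $H_n$, treating the edges of $\overline{H}_n$ as an additional family of ``forbidden'' defects alongside loops and multiple pairs. The algorithm starts with the configuration model on $n$ cells of $k$ points each, and declares a configuration $P$ to be \emph{simple} if $G(P)$ is simple \emph{and} uses no edge of $\overline{H}_n$. Since every $k$-factor of $H_n$ arises from exactly $(k!)^n$ simple configurations, a uniform sampler over the set of simple configurations of this restricted kind yields a uniform sampler on $k$-factors.

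First I would extend Definition~\ref{def:good-set} by imposing an additional upper bound on the number of forbidden pairs, and show that a uniformly random configuration is good with constant probability. A first-moment estimate in the spirit of \cite{MW90,Janson2009} gives expected counts of order $k$ loops, $k^2$ double pairs, and $k^2\Delta/n$ forbidden pairs, with variances of the same order, so the hypothesis $(k+\Delta)k\Delta=o(n)$ is enough to keep the number of defects concentrated and to rule out pairs of multiplicity three or higher. Then I would design three families of switchings: loop-switchings and double-switchings exactly as in \cite{MW90,GW}, together with a new forbidden-switching that excises a forbidden pair and two ordinary pairs and rewires them so that no loop, multiple pair, or new forbidden pair is created. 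The switching phases are performed in an order (loops, then forbidden pairs, then double pairs) chosen so that no phase reintroduces defects cleared in an earlier one.

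The key technical step is to bound the forward counts $f(P,X)$ above and the backward counts $b(P',X)$ below for each switching type $X$, accounting for the additional ``bad'' reverses that would recreate an edge of $\overline{H}_n$ at some incident vertex. Because each point sees at most $\Delta$ forbidden partners, each such correction contributes an error of relative order $(k+\Delta)k\Delta/n$ per step, and the hypothesis in part (i) makes the aggregate rejection probability across all phases $o(1)$. Setting the rejection probabilities as in (\ref{rejection}) for the loop and forbidden phases, and using the Gao--Wormald type I/II, class A/B machinery with the linear system of \cite[Lemma~5]{GW} for the double-pair phase, guarantees a uniform output distribution. The main obstacle will be establishing these lower bounds with enough precision when forbidden edges are present, since the structure of $\overline{H}_n$ is arbitrary in part (i); the trick is to use only that $\overline{H}_n$ has maximum degree $\Delta$ so that the contribution of bad reverses can be absorbed into the upper-bound estimates. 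The runtime analysis then follows the proof of Theorem~\ref{thm:MW-runtime}: initialising counts of small structures (now including forbidden-edge incidences) costs $O((k+\Delta)^3 n)$, which dominates the cost of the $O((k+\Delta)^2)$ switching updates.

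For part (ii) the added hypothesis that $H_n$ is $(n-\Delta-1)$-regular means $\overline{H}_n$ is $\Delta$-regular, which enables tighter counting: numbers of forbidden pairs concentrate much more sharply, and the backward-switching lower bounds admit symmetry-based refinements that allow the hypothesis to be weakened to $k^2+\Delta^2=o(n)$. The cost is that extra corrective (type II-style) switchings must be introduced to offset the finer biases created by the more permissive defect regime, and the accompanying cost of maintaining the richer auxiliary counters produces the additional terms $(k+\Delta)^8 k^2\Delta^2/n$ and $(k+\Delta)^{10}k^2\Delta^3/n^2$ in the runtime. Uniformity and rejection-free termination with probability $1-o(1)$ again follow from solving the Gao--Wormald linear system in this enlarged setting.
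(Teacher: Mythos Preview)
The survey does not actually prove Theorem~\ref{thm:GG-dFactor}; it merely cites the result from~\cite{GG-dFactor} and records that the algorithms \textsc{FactorEasy} and \textsc{FactorUniform} arise by adapting the Gao--Wormald framework of~\cite{GW}. Your outline is consistent with that one-line description: treating pairs that land on an edge of $\overline{H}_n$ as a third defect type alongside loops and multiple pairs, introducing a dedicated switching to remove them, and tracking the $\Delta$-dependent corrections to the forward and backward counts is indeed the strategy used in~\cite{GG-dFactor}.

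Two remarks on the sketch itself. First, your estimate of the expected number of forbidden pairs is off: there are up to $n\Delta/2$ edges in $\overline{H}_n$, each appearing in $G(P)$ with probability roughly $k/n$, giving order $k\Delta$ forbidden pairs rather than $k^2\Delta/n$. This matters, because $k\Delta$ can be large and the forbidden-pair phase is what drives the analysis and the hypothesis $(k+\Delta)k\Delta = o(n)$. Second, the division of labour between the two parts is not quite as you describe. The condition in part~(i) is the analogue of the McKay--Wormald range $k=O(n^{1/3})$, and \textsc{FactorEasy} gets by with the simpler rejection scheme in the style of~\cite{MW90}; you do not need the type~I/II, class~A/B machinery there. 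It is \textsc{FactorUniform} in part~(ii), pushing to $k^2+\Delta^2=o(n)$ (the analogue of $k=o(\sqrt{n})$), that requires the full Gao--Wormald boosting apparatus, and it is there that the regularity of $\overline{H}_n$ is exploited to sharpen the backward-count lower bounds and to justify the more elaborate runtime expression.
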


In~\cite{GG-dFactor}, the algorithms described in Theorem~\ref{thm:GG-dFactor} are  called
\textsc{FactorEasy} and \textsc{FactorUniform}, respectively. 
Previously the only algorithm for this problem was a rejection
algorithm of Gao~\cite{Gao-rejection} which has expected linear
runtime when $k=O(1)$ and $\overline{H}_n$ has at most a linear number
of edges (but the maximum degree of $\overline{H}_n$ can be linear).

\subsubsection{Asymptotically-uniform algorithms based on switchings}

In~\cite[Theorem 3]{GW}, Gao and Wormald described an algorithm 
\textsc{REG}$^*$ which performs
asymptotically-uniform sampling from $G(n,k)$ in expected runtime $O(kn)$.
This algorithm is obtained from \textsc{REG}
by never performing any rejection steps and never performing any class~B
switchings.  (So only loop-switchings, triple-switchings and
type I, class A double-switchings will be used.) 
This is more efficient as computation of the
b-rejection probabilities is the most costly part of the algorithm.
The output of \textsc{REG}$^*$ is within total variation distance
$o(1)$ of uniform when $k=o(\sqrt{n})$.
Indeed, Gao and Wormald remark that the McKay--Wormald algorithm
can be modified in the same way, giving an asymptotically-uniform
sampling algorithm with expected runtime $O(M)$ whenever $\kmax = O(M^{1/4})$.

A similar performance was obtained by Zhao~\cite{zhao} using a slightly
different approach, involving the use of a Markov chain to make
local modifications starting from $G(P)$, where $P$ is a uniformly
random element of $\mathcal{P}(\kvec)$.

Recently, Janson~\cite{Janson2019} introduced and analysed
the following switching-based algorithm for asymptotically-uniform sampling
from $G(\kvec)$.  Say that a pair in a configuration is \emph{bad} if it
is a loop or part of a double pair.  (Recall that, as we have defined
it here, ``double pair'' does not
mean that the multiplicity of the corresponding edge is exactly two: only
that the multiplicity is at least two.)

\begin{figure}[ht!]
\begin{center}
\hspace{\dimexpr-\fboxrule-\fboxsep\relax}\fbox{
\begin{minipage}{0.8\textwidth}
\begin{tabbing}
  \textsc{Janson algorithm}\\
    X \= \kill
  \> XXXs \= \kill 
  \emph{Input:}\>\> graphical sequence $\kvec$\\
  \emph{Output:}\>\> element of $G(\kvec)$, denoted $\widehat{G}$\\
  \\
   choose $P\in\mathcal{P}(\kvec)$ u.a.r. \\
repeat\\
     X \= \kill
  \> choose and orient a bad pair $pp'$ in $P$ u.a.r. \\
  \> choose and orient a distinct pair $qq'$, u.a.r.\\
  \> delete pairs $pp', qq'$ from $P$, and replace with pairs
   $pq, p'q'$\\
  until $G(P)$ is simple\\
  output $\widehat{G} = G(P)$
\end{tabbing}
\end{minipage}}
\caption{Janson algorithm, corresponding to the switched configuration model}
\label{fig:janson}
\end{center}
\end{figure}
Starting from a uniformly random configuration
$P\in\mathcal{P}(\kvec)$, if $G(P)$ is simple then we output $G(P)$.
Otherwise, choose a pair uniformly at random from the set of all bad pairs
in $P$.  Next, choose a pair uniformly at random
from the set of all other pairs in $P$. 
Update $P$ by removing these two pairs and replacing them by two other 
pairs using the same four points, chosen uniformly at random.
(In the pseudocode, this is done by randomly ordering the points in each
chosen pair.)
This gives a new configuration in $\mathcal{P}(\kvec)$. 
At each step, the switching removes the chosen bad pair, 
and may cause other pairs to stop being bad or to become bad.  
Repeatedly apply the switching step until $G(P)$ is simple, and let $\widehat{G}=G(P)$ denote the output graph.   
This algorithm is shown in Figure~\ref{fig:janson}.
Janson calls the resulting probability
space the \emph{switched configuration model}. This is a non-uniform
probability space over $G(\kvec)$.

The switchings used in this process are illustrated in 
Figure~\ref{fig:switchings-Janson}.  They were used by
McKay~\cite{McKay85} in a very early application of the switching method,
and are simpler than the switchings in Figure~\ref{fig:switchings}. 
Note however that Janson does not insist that the cells involved in the
switching are all distinct, or that the new pairs $pq$, $p'q'$ do not
increase the multiplicity of an edge.  So 
Figure~\ref{fig:switchings-Janson} should be interpreted differently
to Figure~\ref{fig:switchings}: in Figure~\ref{fig:switchings}, it is implied 
that all illustrated
cells are distinct and that all new pairs lead to edges with multiplicity~1.
\begin{figure}[ht!]
\begin{center}
\begin{tikzpicture}[scale=0.6]
\draw (-0.2,2) circle (0.8 and 0.5); \draw (2.2,2) circle (0.8 and 0.5);
\draw (1,3.5) circle (1.2 and 0.6);
\draw [fill] (0,2) circle (0.1); \draw [fill] (2,2) circle (0.1);
\draw [fill] (0.75,3.5) circle (0.1); \draw [fill] (1.25,3.5) circle (0.1);
\draw [-] (0,2) -- (2,2);
\draw [-] plot [smooth] coordinates {(0.75,3.5) (0.5,4.3) (1.5,4.3) (1.25,3.5)};
%%%
\node [left] at (0.7,3.4) {$p$}; \node [right] at (1.3,3.5) {$p'$};
\node [left] at (-0.1,1.9) {$q$}; \node [right] at (2.05,2) {$q'$};
\draw [->, line width=3pt] (3.5,3) -- (4.5,3);
%%%
\begin{scope}[shift={(6,0)}]
\draw (-0.2,2) circle (0.8 and 0.5); \draw (2.2,2) circle (0.8 and 0.5);
\draw (1,3.5) circle (1.2 and 0.6);
\draw [fill] (0,2) circle (0.1); \draw [fill] (2,2) circle (0.1);
\draw [fill] (0.75,3.5) circle (0.1); \draw [fill] (1.25,3.5) circle (0.1);
\draw [-] (0,2) -- (0.75,3.5)  (1.25,3.5)-- (2,2);
%%%
\node [left] at (0.7,3.4) {$p$}; \node [right] at (1.3,3.5) {$p'$};
\node [left] at (-0.1,1.9) {$q$}; \node [right] at (2.05,2) {$q'$};
\end{scope}
%%%%%%%%%%%%%%%%%%%
\begin{scope}[shift={(12,2)}]
\draw (-0.2,0) circle (0.7 and 0.5); \draw (2.2,0) circle (0.7 and 0.5); 
\draw (-0.2,2) circle (0.7 and 0.8); \draw (2.2,2) circle (0.7 and 0.8);
\draw [fill] (0,0) circle (0.1); \draw [fill] (2,0) circle (0.1); 
\draw [fill] (0,1.75) circle (0.1); \draw [fill] (2,1.75) circle (0.1);
\draw [fill] (0,2.25) circle (0.1); \draw [fill] (2,2.25) circle (0.1);
\draw [-] (0,0) -- (2,0)  (0,1.75) -- (2,1.75) (0,2.25) -- (2,2.25);
\draw [->, line width=3pt] (3.5,1) -- (4.5,1);
%%%
\node [left] at (0.0,-0.1) {$q$}; \node [right] at (2.0,-0.1) {$q'$};
\node [left] at (0.0,1.75) {$p$}; \node [right] at (2.0,1.75) {$p'$};
\begin{scope}[shift={(6,0)}]
\draw (-0.2,0) circle (0.7 and 0.5); \draw (2.2,0) circle (0.7 and 0.5);
\draw (-0.2,2) circle (0.7 and 0.8); \draw (2.2,2) circle (0.7 and 0.8);
\draw [fill] (0,0) circle (0.1); \draw [fill] (2,0) circle (0.1); 
\draw [fill] (0,1.75) circle (0.1); \draw [fill] (2,1.75) circle (0.1);
\draw [fill] (0,2.25) circle (0.1); \draw [fill] (2,2.25) circle (0.1);
\draw [-] (0,0) -- (0,1.75)   (2,0) -- (2,1.75) (0, 2.25) -- (2,2.25);
%%%
\node [left] at (0.0,-0.1) {$q$}; \node [right] at (2.0,-0.1) {$q'$};
\node [left] at (0.0,1.75) {$p$}; \node [right] at (2.0,1.75) {$p'$};
\end{scope}
\end{scope}
\end{tikzpicture}
\caption{Possible switchings in Janson's algorithm, with chosen points labelled}
\label{fig:switchings-Janson}
\end{center}
\end{figure}

Janson proved the following result~\cite[Theorem 2.1]{Janson2019}.
Recall the definition of $R=R(\kvec)$ from Theorem~\ref{thm:config}.

\begin{theorem} \emph{\cite{Janson2019}}
\label{thm:janson-switching}
Suppose that $\kvec$ is a graphical degree sequence which satisfies
\begin{equation}
\label{janson-conditions}
 \kmax = o(n^{1/2}),\qquad M = \Theta(n), \qquad R = O(n)
\end{equation}
and let $\widehat{G}$ denote the output of the switched
configuration model for the degree sequence $\kvec$.
Then the distribution of $\widehat{G}$ is within total variation
distance $o(1)$ of uniform. With high probability, the runtime
is $O(M)$ as only $O(1)$ switching steps are required. 
\end{theorem}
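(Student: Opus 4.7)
The plan is to decompose the theorem into two claims: (a) the expected number of switching steps is $O(1)$, which yields the $O(M)$ runtime bound, and (b) the output $\widehat{G}$ is within total variation distance $o(1)$ of uniform on $G(\kvec)$.

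For the runtime, I would first verify that the initial number of bad pairs $N_0$ (loops together with pairs belonging to double-pairs) has constant expectation. Standard first-moment computations show that the expected number of loops in a uniform $P\in\mathcal{P}(\kvec)$ is $M_2/(2(M-1))$ and the expected number of pairs contained in double-pairs is on the order of $M_2^2/M^2$; both are $O(1)$ since $M_2 = R-M = O(n)$ and $M = \Theta(n)$. Poisson approximation for short cycles (as used implicitly in (\ref{eq:Pr-simple}), which moreover implies $\Pr(\text{simple}) = \Theta(1)$ under (\ref{janson-conditions})) then shows that $N_0$ is bounded in probability.

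Next I would argue that each switching step decreases the expected number of bad pairs by $1 - o(1)$: the chosen bad pair $pp'$ is always removed, and each new pair $pq$, $p'q'$ fails to be good with probability $o(1)$, since forming a loop requires the random partner to lie in a particular cell (probability $O(\kmax/M) = o(1)$ under $\kmax = o(n^{1/2})$), and forming a new multi-pair has similarly small probability. Hence a supermartingale argument gives that with high probability the process terminates in $O(N_0) = O(1)$ steps. Combined with the $O(M)$ cost of sampling the initial configuration and $O(1)$ cost per switching, this yields the claimed runtime.

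The hardest part is the distributional claim. My plan would be to estimate $\Pr(\widehat{G} = G)$ directly for each $G \in G(\kvec)$ and show it equals $(1+o(1))/|G(\kvec)|$ uniformly in $G$. I would decompose by the initial bad-pair count $N_0$ and the length $T$ of the switching sequence; by the previous paragraph only $N_0, T = O(1)$ contribute non-negligibly. For each fixed $T$ one then counts configuration-plus-switching-sequence pairs terminating at a given simple configuration $P_f$ and aims to show this count is asymptotically independent of $P_f$. An alternative route is to construct a coupling with an ideal sampler that outputs a uniformly random simple configuration, and show the coupling succeeds after $O(1)$ steps with probability $1-o(1)$.

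The principal obstacle is controlling the bias accumulated across the $O(1)$ switching steps. At each step the transition probability from $P$ to $P'$ depends both on the current number of bad pairs and on local structure of $P$ near the chosen bad pair (e.g.\ which cells the bad pair touches), so the forward kernel is not symmetric in $P$ and $P'$. Showing that these local dependencies average out uniformly over the target $P_f$ amounts to proving that the relative fluctuations of the per-step transition probabilities are $o(1)$ across the support of the process; this is precisely where the hypotheses $\kmax = o(n^{1/2})$, $R = O(n)$ and $M = \Theta(n)$ should combine to give the required concentration and thereby the $o(1)$ total variation bound.
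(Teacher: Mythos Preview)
The paper does not prove this theorem: it is a survey, and the result is simply quoted from Janson~\cite{Janson2019} with no proof or sketch provided. So there is no ``paper's own proof'' to compare against. I can only comment on your plan as a standalone sketch.

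Your runtime argument is essentially correct in outline. Under (\ref{janson-conditions}) the expected numbers of loops and double pairs in a uniform configuration are both $O(1)$, and Poisson approximation gives $N_0 = O_p(1)$. One point to tighten: removing the bad pair $pp'$ can change the bad-pair count by more than~$1$ (e.g.\ if $pp'$ was half of a double pair, the partner pair ceases to be bad as well; if $qq'$ was itself bad, that also disappears), and the new pairs can create new defects. These corrections are all $o(1)$ or favourable, so a drift/supermartingale argument does go through, but the bookkeeping should be stated.

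For the distributional claim you have correctly located the difficulty but not resolved it. Your ``count configuration-plus-switching-sequence pairs terminating at $P_f$'' plan runs into exactly the issue that motivates the rejection probabilities in the McKay--Wormald algorithm: the number of non-simple pre-images of a simple $P_f$ at switching-distance~$t$ genuinely depends on local structure of $P_f$ (how many $2$-paths it has, etc.), and this variation is $\Theta(1)$, not $o(1)$, for individual~$t$. What makes Janson's algorithm work is that (i) the \emph{initial} configuration is uniform over all of $\mathcal{P}(\kvec)$, not restricted to a good set, and (ii) the switching kernel, summed over the random choice of $qq'$ and orientations, is close to a symmetric kernel on $\mathcal{P}(\kvec)$ up to $O(\kmax/M)$ errors, so the uniform measure is approximately stationary at every step. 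The $o(1)$ total variation bound then comes from the fact that only $O_p(1)$ steps are taken and each step perturbs the measure by $o(1)$. Your sketch gestures at this in the final paragraph but does not isolate the near-symmetry of the kernel, which is the mechanism that makes the biases cancel rather than merely ``average out''. Without that observation the direct-counting route will not close.
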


Janson remarks that the bad pair may also be chosen deterministically
according to some rule, such as lexicographically.  This would lead
to a slightly different distribution on the output graph,
but the conclusion of Theorem~\ref{thm:janson-switching} would still hold.

Furthermore, Janson~\cite[Corollary 2.2 and Corollary 2.3]{Janson2019} 
proved that under the same conditions (\ref{janson-conditions}),
statements about convergence in probability and convergence
in distribution which are true for $\widehat{G}$ are also true
for uniformly-random elements of $G(\kvec)$.  

\subsubsection{Graphs with power-law degree distributions}

Heavy-tailed distributions are often observed in real-world 
networks~\cite{CSN,newman2003},
but are difficult to sample as they are far from regular and their
maximum degree is too high for the sampling algorithms we
have seen so far.  In~\cite{GW-power},
Gao and Wormald showed how to adapt their approach to 
degree sequences which satisfy the following definition.

%\begin{definition}
\begin{Def}
\cite[Definition~1]{GW-power}\
The degree sequence $\kvec$ is \emph{power-law distribution-bounded}
with parameter $\gamma >1$ if the minimum entry in $\kvec$ is at
least~1, and there is a constant $C>0$ independent of $n$ such that
the number of entries of $K$ which are at least $i$ is at most 
$C n i^{1-\gamma}$ for all $i\geq 1$.   
\label{def:plib}
%\end{definition}
\end{Def}

Other definitions of power-law degree sequences can be found in
the literature, but some only allow maximum degree $O(n^{1/\gamma})$,
which is $o(n^{1/2})$ when $\gamma\in (2,3)$.
Definition~\ref{def:plib} is more realistic as it allows higher degrees,
as observed in real-world networks.
Gao and Wormald~\cite[equation (4)]{GW-power} noted that if $\kvec$
is power-law distribution-bounded with parameter $\gamma$ then
\begin{equation}
\label{power-law-facts}
 \kmax = O(n^{1/(\gamma-1)}), \qquad M = \Theta(n), \qquad M_2 = O(n^{2/(\gamma-1)}),
\end{equation}
where $M_2 = M_2(\kvec)$ is given in Definition~\ref{def:good-set}.

\bigskip

The most relevant range of $\gamma$ for real-world networks is
$\gamma\in (2,3)$.  As observed
by Gao and Wormald~\cite{GW-power}, when $\gamma>3$ it is easy
to sample uniformly from $G(\kvec)$.  We provide a brief proof here.

\begin{lemma} \emph{\cite{GW-power}}\
\label{lem:power-high}
Suppose that $\kvec$ is a power-law distribution-bounded degree
sequence with $\gamma >3$.  Then the configuration model (Figure~\ref{fig:config})
gives a polynomial-time uniform sampler for $G(\kvec)$ with
expected runtime $O(M)$.
\end{lemma}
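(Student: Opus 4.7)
The plan is to reduce the claim directly to Theorem~\ref{thm:config} using the structural estimates (\ref{power-law-facts}) that follow from the power-law distribution-bounded hypothesis. First I would verify the hypothesis of Theorem~\ref{thm:config}, namely $k_{\max}^2 = o(M)$. From (\ref{power-law-facts}) we have $k_{\max} = O(n^{1/(\gamma-1)})$, and $\gamma > 3$ gives $1/(\gamma-1) < 1/2$, so $k_{\max}^2 = O(n^{2/(\gamma-1)}) = o(n) = o(M)$ since $M = \Theta(n)$. Thus Theorem~\ref{thm:config} applies and the configuration model is a uniform sampler with expected runtime $\Theta\bigl(M \exp(R^2/(4M^2))\bigr)$.

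Next I would show that the exponential factor is bounded. Writing $R = \sum_{j} k_j^2 = \sum_j k_j(k_j-1) + \sum_j k_j = M_2 + M$, the estimates in (\ref{power-law-facts}) yield $M = \Theta(n)$ and $M_2 = O(n^{2/(\gamma-1)})$. Since $\gamma > 3$ forces $2/(\gamma-1) < 1$, the bound $M_2 = o(n) = o(M)$ follows, and hence $R = O(M)$. Consequently $R^2/(4M^2) = O(1)$, so $\exp(R^2/(4M^2)) = \Theta(1)$, and the expected runtime collapses to $O(M)$.

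There is no real obstacle: the entire argument is a short chain from (\ref{power-law-facts}) through Theorem~\ref{thm:config}. The only point that requires any care is the identity $R = M_2 + M$, which lets us transfer the second-moment bound on $M_2$ directly into the bound on $R$ needed to control the exponential factor. Uniformity of the output distribution is inherited from Theorem~\ref{thm:config}, which completes the proof.
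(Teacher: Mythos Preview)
Your proposal is correct and follows essentially the same route as the paper: use (\ref{power-law-facts}) together with $\gamma>3$ to verify $k_{\max}^2=o(M)$ and to show $R=M_2+M=\Theta(M)$, then invoke Theorem~\ref{thm:config} to conclude that the exponential factor is $O(1)$ and the expected runtime is $O(M)$. The paper's proof is simply a terser version of exactly this argument.
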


\begin{proof}
It follows from (\ref{power-law-facts}) that
$\kmax^2 = o(M)$ and 
$R = \sum_{j\in [n]} k_j^2 = M_2 + M = \Theta(M)$.
The proof is completed by applying Theorem~\ref{thm:config}. 
\end{proof}

While uniform sampling is easy when $\gamma> 3$, it is a very
challenging problem when $\gamma\in (2,3)$.
To cope with the very high maximum degree when $\gamma < 3$,
Gao and Wormald utilised 6 different types of switching (all of the same class).
First, they focussed on removing
``heavy'' edges or loops, where  an edge is \emph{heavy} if both its endvertices
have high degree.  They also introduced a new kind
of rejection, called \emph{pre-b-rejection}, which is used to equalise
the number of ways to choose some additional pairs which are needed
to perform some of the switchings.  
They described a uniform sampler \textsc{PLD}, and an asymptotically-uniform
sampling algorithm called \textsc{PLD}$^*$, obtaining the following
result~\cite[Theorem 2, Theorem 3]{GW-power} when the parameter $\gamma$
is a little less than~3.
These are the first rigorously-analysed algorithms which can efficiently 
sample graphs with 
a realistic power-law degree distribution for some values of $\gamma$ below~3.

\begin{theorem} \emph{\cite{GW-power}}\
\label{thm:GW-power}
Suppose that $\kvec$ is a power-law distribution-bounded degree
sequence with parameter $\gamma$ such that
\[ \gamma > \dfrac{21 + \sqrt{61}}{10} \approx 2.881.\]
The algorithm \textsc{PLD} is a uniform sampler for $G(\kvec)$
with expected runtime $O(n^{4.081})$.
The algorithm \textsc{PLD}$^*$ performs asymptotically-uniform
sampling from $G(\kvec)$ with  expected runtime $O(n)$.
\end{theorem}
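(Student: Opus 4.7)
The plan is to follow the McKay--Wormald / Gao--Wormald switching framework of Sections~\ref{s:switchings}--\ref{s:switching-recent}, but tailored to cope with the very high maximum degree $\kmax = O(n^{1/(\gamma-1)})$ permitted by Definition~\ref{def:plib}. Following~\cite{GW-power}, the first move is to classify vertices as \emph{heavy} or \emph{light} according to a threshold $h = h(n)$ (to be optimised later), and correspondingly classify pairs of a configuration as heavy, light, or mixed. The algorithm starts by repeatedly sampling $P \in \mathcal{P}(\kvec)$ uniformly at random until $P$ is \emph{good} (only $O(1)$ loops and double pairs incident to any heavy cell, no pair of multiplicity $\ge 4$, and overall counts of each defect type bounded by the expected value plus a slack). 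Using the moment bounds~(\ref{power-law-facts}) together with Janson-type Poisson approximation for short cycles in configurations, one shows that this event occurs with probability bounded below by a positive constant, so the startup cost is $O(M) = O(n)$ in expectation.

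Next I would introduce six switching operations, one per defect type: heavy loops, light loops, heavy double pairs, mixed double pairs, light double pairs, and a residual operation for the rare triple pairs. Each switching removes one defect; in class~A it introduces no new defect, while class~B variants, analogous to Figures~\ref{fig:double-classB}--\ref{fig:double-typeII}, are included to reduce the f-rejection probability. The essential new ingredient, needed because the number of bad choices involving a heavy cell fluctuates too much for direct b-rejection to yield an $o(1)$ rejection probability, is \emph{pre-b-rejection}: before evaluating the backward count, some auxiliary points used to carry out the switching are resampled from a slightly larger superset and discarded with a probability that equalises the effective backward multiplicity across $\mathcal{C}_0$. Uniformity then follows by exactly the inductive argument used in Lemma~\ref{lem:MW-uniform}: if the current configuration is uniform on its level set of defects, then the next accepted switching lands uniformly on the codomain level set. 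Chaining through all six phases delivers a uniformly random element of the defect-free stratum, and hence of $G(\kvec)$.

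The hard part, and the source of the constraint $\gamma > (21+\sqrt{61})/10$, is bounding the total rejection probability and the per-step cost. For each switching type one must produce matching bounds $\overline{m}(\cdot)$ and $\underline{m}(\cdot)$ on the forward and backward counts whose ratio is $1 + o(1)$; the discrepancy is controlled by the number of heavy vertices and by $M_2$, both governed by~(\ref{power-law-facts}). The expected number of switchings required to clear all defects is $O(M_2^2/M^2)$ plus corrections from heavy structures, while the dominant per-step cost comes from maintaining lists of heavy multi-edges across updates. Optimising the threshold $h$ balances the number of heavy defects against the enumeration cost and the requirement that cumulative rejection is $o(1)$; this balance is a quadratic condition on $1/(\gamma-1)$, whose positivity threshold is exactly $\gamma = (21+\sqrt{61})/10$, and evaluating the optimum at the boundary yields the expected runtime $O(n^{4.081})$ for \textsc{PLD}.

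For \textsc{PLD}$^\ast$, the modification mirrors the passage from \textsc{REG} to \textsc{REG}$^\ast$ in~\cite{GW}: drop all b-rejection and pre-b-rejection steps and restrict to class~A switchings. Since the backward bookkeeping was the expensive part of each step, the per-step cost collapses to $O(1)$ amortised and, because $M = \Theta(n)$ by~(\ref{power-law-facts}), the total expected runtime becomes $O(n)$. The bias introduced by omitting the rejections is bounded by the probability that a class~B switching would have occurred plus the total variation gap $\sum |b(P',X)/\underline{m}(\cdot) - 1|$ accumulated over the run, and both of these quantities are shown to be $o(1)$ by the same concentration estimates on heavy defects used in the analysis of \textsc{PLD}. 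Hence the output distribution of \textsc{PLD}$^\ast$ is within total variation distance $o(1)$ of uniform on $G(\kvec)$, as claimed.
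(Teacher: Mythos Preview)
This survey does not itself prove Theorem~\ref{thm:GW-power}; the result is quoted from~\cite{GW-power} and accompanied only by a high-level description of the approach. Comparing your sketch with that description reveals two substantive mismatches.

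First, you import the type/class machinery from \textsc{REG}, introducing class~B variants of each switching ``analogous to Figures~\ref{fig:double-classB}--\ref{fig:double-typeII}''. But the text immediately preceding the theorem states that for \textsc{PLD}, Gao and Wormald ``utilised 6 different types of switching (\emph{all of the same class})''. The booster and class distinctions of~\cite{GW} are not what make \textsc{PLD} work; the novelty is a richer palette of switching \emph{types} (the paper singles out switchings targeting ``heavy'' edges and loops, where an edge is heavy when \emph{both} endvertices have high degree) together with the new \emph{pre-b-rejection} step. So your structural picture of the algorithm is off, and consequently your uniformity argument, which leans on the class mechanism, is not the one actually used.

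Second, and more damaging for the analysis, you never invoke the parameter $J(\kvec)$ of~(\ref{def:Jd}). The paragraph following the theorem stresses that this parameter is central to the argument in~\cite{GW-power}: for power-law distribution-bounded sequences one has $J(\kvec) = O\big(n^{(2\gamma-3)/(\gamma-1)^2}\big) = o(\kmax^2)$, and it is this sharper bound on 2-path counts, not the crude $\kmax^2$ or the $M_2$ estimate from~(\ref{power-law-facts}) that you rely on, which keeps the forward/backward ratios under control. Without $J(\kvec)$ your ``ratio is $1+o(1)$'' step would fail for $\gamma<3$, and the optimisation you sketch would not produce the threshold $(21+\sqrt{61})/10$.
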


In their analysis, Gao and Wormald used a new parameter, $J(\kvec)$,
which they introduced in the context of asymptotic enumeration
in~\cite{GW-power-enum}.  We have seen that a switching argument
breaks down when the number of bad choices for a given switching
operation becomes too large.  This often involves counting paths
of length two from a given vertex.  In previous work, the
bound $k_{\max}^2$ was often used for this quantity.
(Of course $k_{\max}(k_{\max}-1)$ is more precise but gives
the same asymptotics.)
Instead, Gao and Wormald use the upper bound $J(\kvec)$,
defined as follows.  First, let $\sigma$ be a permutation of
$[n]$ such that $k_{\sigma(1)}\geq k_{\sigma(2)}\geq \cdots \geq k_{\sigma(n)}$.
Then, define
\begin{equation}
\label{def:Jd}
 J(\kvec) = \sum_{j=1}^{k_{\max}} k_{\sigma(j)},
\end{equation}
noting that $k_{\max} = k_{\sigma(1)}$.  
So $J(\kvec)$ is the sum of the $k_{\max}$ largest entries of
$\kvec$, and hence forms an upper bound on the number of 2-paths
from an arbitrary vertex.

If $\kvec$ is regular then $J(\kvec)=\kmax^2$, but when $\kvec$
is far from regular, $J(\kvec)$ can be significantly smaller
than $k_{\max}^2$.    In particular, if $\kvec$ is a power-law
distribution-bounded degree sequence with parameter $\gamma$
then $k_{\max}^2 = n^{2/(\gamma-1)}$, while 
\[ J(\kvec) = O\big(n^{(2\gamma-3)/(\gamma-1)^2}\big) = o(n^{2/(\gamma-1)}).\] 
This bound on $J(\kvec)$ is proved in~\cite[Lemma~5]{GvSS}, and more briefly
in~\cite[equation~(54)]{GW-power}.

The parameter $J(\kvec)$ has  proved very powerful when working
with heavy-tailed degree distributions.  As well as its use in
asymptotic enumeration~\cite{GW-power-enum} and uniform sampling~\cite{GW-power},
it has also been used in the analysis~\cite{GvSS} of the number of 
triangles and the clustering coefficient in a uniformly random element
of $G(\kvec)$, for heavy-tailed degree sequences $\kvec$.
We will encounter $J(\kvec)$ again in Section~\ref{s:stability}.

\subsubsection{Incremental relaxation}\label{s:incremental}

Very recently, Arman, Gao and Wormald~\cite{AGW} introduced
a new approach, called \emph{incremental relaxation},
which allows a more efficient implementation of the b-rejection step. 
In incremental relaxation, the b-rejection is performed iteratively
over several steps, each with its own sub-rejection probability,
such that the sub-rejection probabilities are much easier to calculate
than the overall probability of b-rejection.  Using this idea,
Arman et al.\ obtain improvements on the runtime of the algorithms
in~\cite{GW,GW-power,MW90}, and give an algorithm for uniformly sampling 
bipartite graphs with given degrees when the maximum degree is $O(M^{1/4})$.
We collect their results together below.

\begin{theorem} \emph{\cite[Theorems 1--4]{AGW}}\
Let $\kvec$ be a graphical degree sequence.  There are
 algorithms, called \textsc{INC-GEN}, \textsc{INC-REG},
and \textsc{INC-POWERLAW}, respectively,
which %described in~\cite{AGW} 
perform uniform sampling from $G(\kvec)$
under the following assumptions on $\kvec$, with the stated expected
runtime:
\begin{itemize}
\item  If $\kmax^4 = O(M)$ then the expected runtime of the algorithm
\textsc{INC-GEN} is $O(M)$.
\item If $\kmax=(k,k,\ldots, k)$ is regular and $k = o(n^{1/2})$
then the expected runtime of the algorithm \textsc{INC-REG} is
$O(kn + k^4)$.
\item If $\kvec$ is a power-law distribution-bounded degree sequence
with parameter $\gamma > \frac{21 + \sqrt{61}}{10}\approx 2.881$
then the algorithm \textsc{INC-POWERLAW}  has expected runtime
$O(n)$.
\end{itemize}
Now let $\kvec=(\svec,\tvec)$ be a bipartite degree sequence
%where $\svec = (s_1,\ldots, s_m)$ and $\tvec=(t_1,\ldots, t_n)$,
with $\kmax = \max\{ \max s_j,\, \max t_i\}$.
If $\kmax$ satisfies $\kmax^4 = O(M)$,
then there is an algorithm, called \textsc{INC-BIPARTITE}, 
which has expected runtime
$O(M)$ and produces a uniformly-random bipartite graph with
degree sequence $\svec$ on one side of the bipartition and $\tvec$
on the other.
\end{theorem}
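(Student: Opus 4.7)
The plan is to inherit the overall phase structure of the McKay--Wormald and Gao--Wormald frameworks (seed with a uniform element of $\mathcal{P}^\ast(\kvec)$, then successively remove loops, triple pairs, and double pairs via carefully chosen switchings), but to replace the expensive single b-rejection test with probability $1 - b(P')/\underline{m}$ by a sequence of much cheaper sub-rejection tests. For each switching type I would decompose the ratio $b(P')/\underline{m}$ as a product $\prod_i \rho_i(P')$, where each factor $\rho_i(P')$ measures the fraction of inverse-switchings that avoid a particular local obstruction (for example, a chosen auxiliary point already lying in an excluded cell, or a chosen pair already being present in $P'$). The algorithm then performs a sequence of Bernoulli trials, one per obstruction, each of which samples a constant-size witness and rejects with probability $1-\rho_i(P')$; the proposed switching $P\mapsto P'$ is accepted only if every trial passes.

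Correctness reduces to the induction already used in Lemma~\ref{lem:MW-uniform}: since the joint probability of passing all sub-tests equals exactly $b(P')/\underline{m}$, after each accepted switching the current configuration remains uniform on its stratum $\mathcal{C}_{\ell,d}$, and hence the final output $G(P)$ is uniform on $G(\kvec)$. The runtime gain is that each sub-test runs in $O(1)$ expected time once the neighbourhood counts have been preinitialised, whereas the exact computation of $b(P')$ in~\cite{MW90} requires $\Omega(\kmax^2)$ work per switching. Combined with bounds on the number of switching steps (at most $B_1 + B_2$ in the general case, with sharper bounds in the regular and power-law cases), the initialisation cost dominates and one obtains $O(M)$ total for \textsc{INC-GEN} and \textsc{INC-BIPARTITE}, $O(kn + k^4)$ for \textsc{INC-REG} (where the $k^4$ term absorbs the cost of initialising triple- and double-pair counts in the $k$-regular regime), and $O(n)$ for \textsc{INC-POWERLAW} (where the $J(\kvec)$ bound of~\cite{GW-power-enum} keeps the count of $2$-paths at heavy vertices manageable).

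The main obstacle, and the technical heart of~\cite{AGW}, will be constructing a clean product decomposition $b(P')/\underline{m} = \prod_i \rho_i(P')$ whose factors can be sampled independently and in constant time. For the type I/II, class A/B machinery of~\cite{GW} used in \textsc{INC-REG}, several obstructions must be handled simultaneously and one must verify that each witness test is conditionally independent of the previous ones given their outcomes. For \textsc{INC-POWERLAW}, the challenge is compounded by the heavy-edge switchings of~\cite{GW-power}: one must again factorise the b-rejection and use the sharper power-law enumeration estimates to ensure that the cumulative acceptance probability is bounded below by a constant, so that the expected number of proposed switchings per successful step is $O(1)$. Finally, for \textsc{INC-BIPARTITE} one must check that the bipartite switchings of~\cite[Section 6]{MW90} admit an analogous incremental decomposition; this is comparatively straightforward, since there are no loops to remove and the double-pair analysis carries over with only cosmetic changes.
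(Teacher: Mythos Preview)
The survey does not prove this theorem; it is quoted from~\cite[Theorems~1--4]{AGW} with only the one-paragraph description of incremental relaxation immediately preceding the statement. There is therefore no paper proof to compare against. Your proposal is essentially an expansion of that paragraph and matches the architecture the survey describes: retain the McKay--Wormald / Gao--Wormald phase structure (seed in $\mathcal{P}^\ast(\kvec)$, remove loops, triples, doubles by switchings) and replace the monolithic b-rejection by a sequence of cheap sub-rejections whose composite acceptance probability equals the original one, so that the uniformity induction of Lemma~\ref{lem:MW-uniform} still applies.

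There is, however, a genuine error that runs through the proposal: your ratios are inverted. From~(\ref{rejection}) the b-acceptance probability is $\underline{m}/b(P')$, not $b(P')/\underline{m}$; since $b(P')\geq\underline{m}$, the quantity $b(P')/\underline{m}$ is at least~$1$ and cannot be a probability. Your sentence ``the joint probability of passing all sub-tests equals exactly $b(P')/\underline{m}$'' is therefore impossible as written, and the same inversion appears in the expression $1-b(P')/\underline{m}$ for the rejection probability and in the product decomposition. With the ratio corrected to $\underline{m}/b(P')$, your description of the induction and of the runtime accounting is sound.

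A secondary point: you present the decomposition as a product of factors $\rho_i(P')$, each the ``fraction of inverse-switchings that avoid a particular local obstruction'', with the sub-tests performed independently. The obstructions overlap, so such a clean independent factorisation does not exist; what~\cite{AGW} does is closer to a telescoping product in which constraints are relaxed one at a time and each sub-test is conditional on the earlier ones. You do explicitly flag this dependence as ``the main obstacle'' and ``the technical heart'', so the proposal is candid about where the real work lies; but the mechanism you sketch before that caveat would not work as stated.
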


Observe that incremental relaxation leads to greatly improved
runtimes, for example from $O(\kmax^2 M^2)$ to $O(M)$ for uniform
sampling from $G(\kvec)$ when $\kmax = O(M^{1/4})$, and
from $O(n^{4.081})$ to $O(n)$ for power-law distribution-bounded
degree sequences with $\gamma > 2.882$.

At the time of writing, C code for \textsc{INC-GEN} and \textsc{INC-REG} is available from
Wormald's website\footnote{\texttt{https://users.monash.edu.au/$\sim$nwormald}}.

\section{Markov chain algorithms}\label{s:markov}

In this section we review sampling
algorithms which use the \emph{Markov chain Monte Carlo}  (MCMC) approach.
Here an ergodic Markov chain is defined with the desired stationary
distribution: in our setting, the stationary distribution should be 
\emph{uniform} over $G(\kvec)$, or perhaps over a superset of $G(\kvec)$.
We refer to such algorithms as ``Markov chain algorithms''.

Rather than work steadily towards a particular goal, such as the sequential
algorithms described in Section~\ref{s:SW} or the McKay--Wormald algorithm 
discussed in Section~\ref{s:switchings}, the Markov chains we consider
in this section perform a random walk on $G(\kvec)$, usually by making small
random perturbations at each step.  For example, the \emph{switch chain} chooses
two random edges, deletes them and replaces them with two other edges, while
maintaining the degree sequence. See Figure~\ref{fig:switch}.  We will
return to the switch chain in Section~\ref{s:switch}.

\smallskip

\begin{figure}[ht!]
\begin{center}
\begin{tikzpicture}[scale=1.0]
\draw [fill] (0,0) circle (0.1); \draw [fill] (0,2) circle (0.1);
\draw [fill] (2,0) circle (0.1); \draw [fill] (2,2) circle (0.1);
\draw [thick,-] (0,0) -- (0,2);  \draw [thick,-]  (2,0) -- (2,2);
\draw [thick,-,dashed] (0,0) -- (2,0);  \draw [thick,-,dashed]  (0,2) -- (2,2);
\draw [<->, line width=3pt] (3.5,1) -- (4.5,1);
\begin{scope}[shift={(6,0)}]
\draw [fill] (0,0) circle (0.1); \draw [fill] (0,2) circle (0.1);
\draw [fill] (2,0) circle (0.1); \draw [fill] (2,2) circle (0.1);
\draw [thick,-] (0,0) -- (2,0);  \draw [thick,-]  (0,2) -- (2,2);
\draw [thick,-,dashed] (0,0) -- (0,2);  \draw [thick,-,dashed]  (2,0) -- (2,2);
\end{scope}
\end{tikzpicture}
\caption{Transitions of the switch chain}
\label{fig:switch}
\end{center}
\end{figure}

A Markov chain needs a starting state: that is, we must be able
to initially construct a single instance of $G(\kvec)$.  For graphs,
bipartite graphs and directed graphs, if the degree sequence is graphical
then a realization of that degree sequence can easily be constructed.
This is done using the Havel--Hakimi algorithm~\cite{hakimi,havel} for graphs,
Ryser's algorithm~\cite{ryser} for bipartite graphs and an
adaptation of the Havel--Hakimi algorithm for directed graphs~\cite{EMT-HH}.
We remark that the situation for hypergraphs is more complicated, as the
existence problem (``Does a given degree sequence have a realization?'')
is NP-complete for 3-uniform hypergraphs~\cite{DLMO}.

In theoretical computer
science, any polynomial runtime is seen as efficient.
In practice, of course, an algorithm with a high-degree polynomial
runtime may be too slow to use.  All algorithms discussed in previous
sections run until some natural stopping time is reached: that is, by
looking at the current state we can tell whether or not the algorithm
may successfully halt.  In MCMC sampling, however, 
the user must specify the number of transitions $T$ that the Markov chain 
will perform before producing any output.  Typically $T$ is defined
to be the best-known upper bound on the mixing time $\tau(\varepsilon)$,
for a suitable tolerance $\varepsilon$ (see Definition~\ref{def:mixing}).  
For this reason,
loose upper bounds on the mixing time have a significant impact on
the runtime.

Most Markov chain approaches to sampling from $G(\kvec)$ have been analysed 
using the \emph{multicommodity flow} method of Sinclair~\cite{sinclair},
which we describe in Section~\ref{s:flow}. 
Unfortunately, it is often very difficult to obtain tight 
bounds on the rate of mixing time of a Markov chain
using the multicommodity flow method.   
In any case, it is an interesting theoretical challenge to try to
characterise families of degree sequences $\kvec$ for which natural
Markov chains on $G(\kvec)$  have polynomial mixing time.

We now introduce some necessary Markov chain
background.  For more information see for example~\cite{jerrum-book,LPW}.

\subsection{Markov chain background }\label{s:markov-background}

A time-homogenous \emph{Markov chain} $\mathcal{M}$
on a finite state space $\Omega$ is a
stochastic process $X_0,X_1,\ldots$ such that $X_t\in\Omega$ for all
$t\in\N$, and 
\[ \Pr(X_{t+1}=y\mid X_0 = x_0,\ldots, X_t=x_t) = \Pr(X_{t+1}=y\mid X_t = x_t)\]
for all $t\in\N$ and $x_0,\ldots, x_t,y\in\Omega$.  
These probabilities are stored in an $|\Omega|\times |\Omega|$ matrix $P$,
called the \emph{transition matrix} $P$ of $\mathcal{M}$.
(This notation clashes with our earlier use of $P$ for configurations,
but we will not mention configurations in this section so this should
not cause confusion.)
So the $(x,y)$ entry of $P$, denoted $P(x,y)$, is defined by
\[ P(x,y) = \Pr(X_{t+1}=y\mid X_t=x)\]
for all $x,y\in\Omega$ and all $t\geq 0$.

A Markov chain is \emph{irreducible} if there is a sequence of transitions
which transforms $x$ to $y$, for any $x,y\in\Omega$,  and it is 
\emph{aperiodic} if $\gcd\{ t\mid P^t(x,x)>0\} = 1$ for all $x\in\Omega$.  
If a Markov chain is irreducible and aperiodic then we say it is 
\emph{ergodic}. 
The classical theory of Markov chains says that if $\mathcal{M}$
is ergodic then it has a unique stationary distribution
which is the limiting distribution of the chain.

We say that the Markov chain $\mathcal{M}$ is \emph{time-reversible}
(often just called \emph{reversible}) with respect to the distribution $\pi$
on $\Omega$ if the \emph{detailed balance} equations hold:
\[ \pi(x) P(x,y) = \pi(y) P(y,x)\]
for all $x,y\in\Omega$.  If a Markov chain $\mathcal{M}$ is ergodic
and is time-reversible with respect to a distribution $\pi$, then $\pi$
is the (unique) stationary distribution of $\mathcal{M}$. 
(See for example~\cite[Proposition~1.19]{LPW}.)
In particular, if $P$ is symmetric then the stationary distribution
is uniform.  The detailed balance equations are often used to guide the
design of the transition matrix of a Markov chain, so that it has the
desired stationary distribution.

Now assume that $\mathcal{M}$ is ergodic with stationary distribution $\pi$.
For $x\in\Omega$ let $P^t_x$ denote the distribution of $X_t$, 
conditioned on the event $X_0=x$.  
Recall the definition of total variation distance (Definition~\ref{def:dTV}).
For any initial state $x\in\Omega$, the distance 
$d_{TV}(P^t_x,\pi)$ is a geometrically-decreasing function of $t$
(see for example~\cite[Theorem 4.9]{LPW}).  This leads to the following
definition.

%\begin{definition}
\begin{Def}
\label{def:mixing}
Let $\varepsilon > 0$ be a constant.
The \emph{mixing time} of the Markov
chain is the function
\[ \tau(\varepsilon) = \max_{x\in\Omega}\, \min\{ t \in\N \mid
    d_{TV}(P^t_x,\pi) < \varepsilon\}.\]
\end{Def}
%\end{definition}

Here $\varepsilon$ is a user-defined tolerance, which specifies how much
variation from the stationary distribution is acceptable.   The mixing time
captures the earliest time $t$ at which $P_x^t$ is guaranteed to be
$\varepsilon$-close to the stationary distribution, regardless of the starting
state.
Then $P^t_x$ remains $\varepsilon$-close to the stationary
distribution 
for all times $t\geq \tau(\varepsilon)$ and for every initial state $x\in\Omega$.

As usual, we are really interested in sampling from a set $\Omega_n$,
parameterised by $n$, where $|\Omega_n|\to\infty$,  
and we want to know how the runtime of the algorithm
behaves as $n\to\infty$.  The tolerance $\varepsilon=\varepsilon_n$
may also depend on $n$.  We say that the Markov
chain is \emph{rapidly mixing} if the mixing time is bounded above by
a polynomial in $\log|\Omega_n|$ and $\log(\varepsilon^{-1})$.
Normally it is prohibitively difficult to find $\tau(\varepsilon)$ exactly, 
so we aim to find an upper bound $T$ which is polynomial in
$\log|\Omega_n|$ and $\log(\varepsilon^{-1})$.  
Then the Markov chain can be used as an FPAUS for sampling from
$\Omega_n$, in the sense of Definition~\ref{def:FPAUS}, as follows:
starting from a convenient initial state, run the Markov chain for
$T$ steps and output the state $X_T$.  See for example 
Figure~\ref{fig:switch-code}.

We defer introduction of the multicommodity flow method until
Section~\ref{s:flow}.

\subsection{The Jerrum--Sinclair chain }\label{s:jerrum-sinclair}

The first Markov chain algorithm for sampling from $G(\kvec)$
was given by Jerrum and Sinclair~\cite{JS90} in 1990.
They used Tutte's construction~\cite{T1954} to reduce the problem
to that of sampling perfect and near-perfect matchings from an auxiliary
graph $\Gamma(\kvec)$,  then applied their Markov chain 
from~\cite{JS-approx} to solve this problem.  
This resulted in a Markov chain which has uniform stationary 
distribution over the expanded state space
$G'(\kvec) = \cup_{\kvec'} \, G(\kvec')$,
where the union is taken over the set of all graphical sequences
$\kvec'=(k'_1,\ldots, k'_n)$ such that 
$k'_j\leq k_j$ for all
$j\in [n]$ and $\sum_{j\in [n]} |k_j-k'_j| \leq 2$.

The chain performs three types of transitions, which when
mapped back to $G'(\kvec)$ are as follows: deletion of a random edge,
if the current state belongs to $G(\kvec)$; insertion of an edge
between the two distinct vertices with degree deficit one; or 
insertion of a random edge $\{i,j\}$ together with the deletion
of a randomly-chosen neighbouring edge $\{j,\ell\}$.  (We will not
specify the transition probabilities precisely here.)  See Figure~\ref{fig:JS},
where dashed lines represent non-edges.
The third type of transition is called a \emph{hinge-flip} by
Amanatidis and Kleer~\cite{AK}, following~\cite{CAR}.
\begin{figure}[ht!]
\begin{center}
\begin{tikzpicture}[scale=0.85]
\draw [fill] (0,0) circle (0.1); \draw [fill] (0,2) circle (0.1);
\draw [fill] (2,0) circle (0.1); \draw [fill] (2,2) circle (0.1);
\draw [thick,-] (0,0) -- (0,2);  \draw [thick,-,dashed]  (2,0) -- (2,2);
\node [left] at (-0.1,0) {$j$}; \node [left] at (-0.1,2) {$i$};
\node [right] at (2.1,0) {$j$}; \node [right] at (2.1,2) {$i$};
\draw [<->, line width=3pt] (0.5,1) -- (1.5,1);
\begin{scope}[shift={(5,0)}]
\draw [fill] (1,0) circle (0.1); \draw [fill] (0,2) circle (0.1);
\draw [fill] (2,2) circle (0.1); 
\draw [thick,-] (2,2) -- (1,0);  \draw [thick,-,dashed]  (0,2) -- (1,0);
\node [left] at (0.9,0) {$j$}; \node [left] at (-0.1,2) {$i$};
\node [right] at (2.1,2) {$\ell$};
\draw [<->, line width=3pt] (2.5,1) -- (3.5,1);
\begin{scope}[shift={(4,0)}]
\draw [fill] (1,0) circle (0.1); \draw [fill] (0,2) circle (0.1);
\draw [fill] (2,2) circle (0.1); 
\draw [thick,-,dashed] (2,2) -- (1,0);  \draw [thick,-]  (0,2) -- (1,0);
\node [left] at (0.9,0) {$j$}; \node [left] at (-0.1,2) {$i$};
\node [right] at (2.1,2) {$\ell$};
\end{scope}
\end{scope}
\end{tikzpicture}
\caption{Transitions of the Jerrum--Sinclair chain: insertion/deletion (left) and hinge-flip (right)}
\label{fig:JS}
\end{center}
\end{figure}

We can use the Jerrum--Sinclair chain to repeatedly 
sample from $G'(\kvec)$ until an element of $G(\kvec)$ is obtained.
For this to be efficient, the expected number of iterations required must
be bounded above by a polynomial.

\begin{Def}
%\begin{definition}
\label{def:P-stable}
A class of degree sequences is called \emph{P-stable} if there
exists a polynomial $q(n)$ such that $|G'(\kvec)|/|G(\kvec)|\leq q(n)$
for every degree sequence $\kvec = (k_1,\ldots, k_n)$ in the class.
%\end{definition}
\end{Def}

Jerrum and Sinclair proved the following result~\cite[Theorem~2.4]{JS90},
but did not give an explicit (polynomial) bound on the mixing time
of their chain.

\begin{theorem} \emph{\cite{JS90}}\
There is an FPAUS for $G(\kvec)$ for any degree sequence
$\kvec$ which belongs to some P-stable class.
\label{thm:JS}
\end{theorem}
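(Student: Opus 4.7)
The plan is to exploit the classical reduction from graphs with prescribed degrees to matchings in an auxiliary graph. Following Tutte's construction, one builds a graph $\Gamma(\kvec)$ of polynomial size in $n$ whose perfect matchings are in bijection with $G(\kvec)$, and whose matchings missing exactly one edge correspond (in bounded-to-one fashion) with the elements of $G'(\kvec)\setminus G(\kvec)$. Jerrum and Sinclair's earlier Markov chain on matchings from~\cite{JS-approx} is rapidly mixing on any graph for which the ratio of near-perfect to perfect matchings is polynomially bounded. Under the Tutte bijection this ratio is (up to a polynomial factor) precisely $|G'(\kvec)|/|G(\kvec)|$, so \emph{P-stability supplies exactly the combinatorial hypothesis required} by the matching-chain analysis.

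First I would check the Tutte reduction carefully: the auxiliary graph has $O(n+M)$ vertices, and the transitions of the matching chain, projected back through the bijection, become the three moves described before Figure~\ref{fig:JS} (deletion, completion of a degree-deficient pair, and hinge-flip). Then I would quote the mixing-time bound from~\cite{JS-approx}, which, combined with P-stability, yields an FPAUS for the uniform distribution on $G'(\kvec)$ with total variation error $\varepsilon'$ and runtime polynomial in $n$ and $\log(1/\varepsilon')$. This step is essentially a translation exercise, though some care is needed to verify that the probabilities assigned to the three move types on $G'(\kvec)$ correspond to a legitimate reversible chain on matchings of $\Gamma(\kvec)$ with uniform stationary distribution.

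Next, to restrict the output to $G(\kvec)$, I would use rejection sampling: draw from the FPAUS for $G'(\kvec)$ and accept only if the sample lies in $G(\kvec)$. Under the exactly uniform distribution on $G'(\kvec)$ the acceptance probability equals $|G(\kvec)|/|G'(\kvec)|\geq 1/q(n)$ by Definition~\ref{def:P-stable}, so the expected number of trials is at most $q(n)$. A standard coupling argument shows that if each draw is within total variation distance $\varepsilon'$ of uniform on $G'(\kvec)$, then the accepted sample lies within total variation distance $O(q(n)\,\varepsilon')$ of uniform on $G(\kvec)$. Setting $\varepsilon'=\varepsilon/(c\,q(n))$ for a suitable constant $c$ secures the $\varepsilon$-closeness demanded by Definition~\ref{def:FPAUS} while keeping the runtime polynomial in $n$ and $\log(1/\varepsilon)$, because the mixing time depends only logarithmically on $1/\varepsilon'$ (hence only logarithmically on $q(n)$ and on $1/\varepsilon$).

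The main obstacle will be the first step: establishing the polynomial mixing-time bound for the Jerrum--Sinclair chain on $\Gamma(\kvec)$ and confirming that this translates cleanly back to $G'(\kvec)$, since the projection to $G'(\kvec)$ is many-to-one and the ``uniformity'' one ultimately wants lives on $G(\kvec)$, not on matchings. A secondary subtlety is the $\varepsilon$-bookkeeping during rejection sampling: one must ensure that the polynomial blow-up by $q(n)$ in the required accuracy appears only logarithmically in the final runtime, so that the scheme qualifies as an FPAUS and not merely a polynomial-time algorithm for fixed $\varepsilon$.
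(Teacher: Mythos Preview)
Your proposal is correct and follows essentially the same approach that the paper sketches in the text surrounding the theorem: reduce via Tutte's construction to perfect and near-perfect matchings of the auxiliary graph $\Gamma(\kvec)$, invoke the Jerrum--Sinclair matching chain from~\cite{JS-approx} (whose rapid mixing needs precisely the polynomial ratio that P-stability delivers), and then use rejection sampling to pass from $G'(\kvec)$ to $G(\kvec)$. The paper itself gives no proof of the theorem, only this outline and the citation to~\cite{JS90}, so your write-up in fact goes further than the paper does, and the two subtleties you flag (the many-to-one projection from matchings to $G'(\kvec)$, and the $\varepsilon$-bookkeeping so that the $q(n)$ factor enters only logarithmically in the runtime) are exactly the points that require care in a full argument.
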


Various classes of degree sequences are known to be P-stable,
including the class of all regular sequences, and all sequences
with $\kmax$ sufficiently small.  We discuss
P-stability further in Section~\ref{s:stability}.

\subsubsection{A complete solution for the bipartite case}

The Jerrum--Sinclair chain for sampling perfect matchings
from a given graph~\cite{JS-approx} is
slow when the ratio of the number of perfect matchings to the number
of near-perfect matchings is exponentially small.
In 2004, Jerrum, Sinclair and Vigoda~\cite{JSV} described and analysed 
an ingenious algorithm, based on simulated annealing, which overcame
this problem for bipartite graphs.  Their algorithm
%a simulated annealing algorithm which 
gives an FPAUS (and hence an FPRAS) for 
approximately-uniformly sampling (or approximately counting) perfect
matchings from a given bipartite graph. 
An important idea in~\cite{JSV} is to use a non-uniform stationary
distribution over the set of all perfect and near-perfect matchings,
so that the stationary probability of the set of perfect matchings
is at least $1/(4n^2+1)$;  that is, at most polynomially small.
This is achieved by assigning weights to each ``hole pattern''
(for a near-perfect matching, this is the pair of vertices with
deficit one, and for a perfect matching this is the empty set),
as well as edge weights.  Estimating good values for the weights
is achieved iteratively, using simulated annealing.

As a corollary, 
using Tutte's construction~\cite{T1954}, Jerrum, Sinclair and Vigoda obtained
an FPAUS for sampling bipartite graphs with given 
degrees~\cite[Corollary~8.1]{JSV}.
In fact their result is more general: given an arbitrary bipartite
subgraph $H$, they obtain an FPAUS for sampling subgraphs of $H$
with a given degree sequence.  

\begin{theorem} \emph{\cite[Corollary~8.1]{JSV}}\
\label{thm:JSV}
Given an arbitrary bipartite graph $H$, there is an FPAUS
for the set of labelled subgraphs of $H$ with a specified
degree sequence, and there is an FPRAS for computing
the number of these subgraphs.
\end{theorem}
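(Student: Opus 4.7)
The plan is to reduce uniform sampling (respectively approximate counting) of labelled subgraphs of $H$ with a specified degree sequence to uniform sampling (respectively approximate counting) of perfect matchings in a bipartite auxiliary graph $\Gamma$, via a Tutte-style gadget construction, and then invoke the FPAUS/FPRAS for perfect matchings in bipartite graphs referenced just before the statement (from~\cite{JSV}).

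First, I would describe the construction. Let $H$ be bipartite with parts $U=\{u_1,\ldots,u_a\}$ and $V=\{v_1,\ldots,v_b\}$, and target degree sequence $(\svec,\tvec)$ with $s_i\leq \deg_H(u_i)$ and $t_j\leq \deg_H(v_j)$ (otherwise the sample space is empty and the problem is trivial). Build a bipartite graph $\Gamma$ as follows. For each edge $e=\{u_i,v_j\}\in E(H)$, introduce two vertices $x_e^U$ (on the left side of $\Gamma$) and $x_e^V$ (on the right), and include the edge $x_e^Ux_e^V$ in $\Gamma$. For each $u_i\in U$, introduce $\deg_H(u_i)-s_i$ dummy vertices on the right side of $\Gamma$, each joined to every $x_e^U$ with $e\ni u_i$. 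Symmetrically, for each $v_j\in V$, introduce $\deg_H(v_j)-t_j$ dummies on the left side, each joined to every $x_e^V$ with $e\ni v_j$. The graph $\Gamma$ is bipartite and has $O(|E(H)|+|V(H)|)$ vertices, so it can be constructed in polynomial time.

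Next I would verify the correspondence. In any perfect matching $M$ of $\Gamma$, each $x_e^U$ is matched either to $x_e^V$ (call $e$ \emph{selected}) or to one of the $u_i$-dummies (call $e$ \emph{unselected}); and symmetrically for $x_e^V$. A dummy count shows that exactly $s_i$ edges incident to $u_i$ are selected, and exactly $t_j$ edges incident to $v_j$. Hence the selected edges form a subgraph $F(M)\subseteq H$ with degree sequence $(\svec,\tvec)$. Conversely, given such a subgraph $F$, the selected/unselected status of each edge is forced, and it remains to match the $u_i$-dummies to the unselected $x_e^U$'s incident to $u_i$ (and analogously on the other side), which can be done in exactly
\[ C \;=\; \prod_{i}(\deg_H(u_i)-s_i)!\,\prod_{j}(\deg_H(v_j)-t_j)! \]
ways. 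Thus $M\mapsto F(M)$ is a $C$-to-$1$ surjection, with the same constant $C$ for every $F$. Consequently a uniformly random perfect matching of $\Gamma$ yields a uniformly random subgraph of $H$ with degree sequence $(\svec,\tvec)$, and the number of such subgraphs equals $C^{-1}$ times the number of perfect matchings of $\Gamma$.

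Finally, I would apply the JSV FPAUS/FPRAS for perfect matchings of an arbitrary bipartite graph to $\Gamma$, and compose it with the reduction above: sampling outputs the corresponding $F(M)$, while counting outputs the JSV estimate divided by the explicit constant $C$. Since $C$ is computable exactly in polynomial time and the reduction only blows up input size polynomially, the resulting total variation and multiplicative error bounds are preserved up to polynomial factors, giving an FPAUS and FPRAS as claimed. The main subtlety to check is that $C$ really is independent of $F$; this relies on the rigidity of the gadget (every dummy at $u_i$ is adjacent to \emph{all} copies $x_e^U$ with $e\ni u_i$), and on $H$ being bipartite so that the left/right structure of $\Gamma$ is clean. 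A minor additional verification is that the JSV algorithm applies without extra hypotheses on the bipartite graph given to it; as stated earlier, it does.
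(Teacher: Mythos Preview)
Your proposal is correct and follows precisely the approach the paper indicates: the survey does not give its own proof but simply states the result and notes that it follows from Tutte's construction~\cite{T1954} applied to the JSV perfect-matching FPAUS/FPRAS. Your gadget is a clean bipartite version of that construction, and your verification that the map $M\mapsto F(M)$ is $C$-to-$1$ with $C$ independent of $F$ is exactly the point needed to transfer both sampling and counting guarantees.
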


Bez{\'a}kov{\'a}, Bhatnagar and Vigoda \cite{BBV} gave
a more direct implementation of the algorithm from~\cite{JSV},
which avoids Tutte's construction.  This allows them to obtain faster runtime
bounds compared with~\cite{JSV}.
It follows from the proof of~\cite[Theorem~1]{BBV} that their FPAUS 
is valid for any bipartite degree sequence, and has running time
\[ O\big((n_1 n_2)^2 M^3 \kmax \, \log^4(n_1n_2/\varepsilon)\big), \]
where $n_1$ and $n_2$ are the number of nodes in each part of the bipartition,
and, as usual, $k_{\max}$ is the maximum degree and $M$ is the
sum of the degrees.

\subsection{The multicommodity flow method}\label{s:flow}

There are a few methods for bounding the mixing times of Markov chains.
Before proceeding further we describe
the multicommodity flow method, which has been used
to analyse most MCMC algorithms for sampling graphs.
For information on other methods for bounding the mixing times of Markov chains,
see for example~\cite{DG-survey,jerrum-book,LPW,MT}.

Let $\mathcal{M}$ be a time-reversible ergodic Markov chain
and let $N=|\Omega|$ be the cardinality of the state space.
Then the eigenvalues of the transition matrix are real and satisfy
\[ 1 = \lambda_0 > \lambda_1 \geq \cdots \geq \lambda_{N-1} > -1.\]
The mixing time of the Markov chain is controlled by 
%the quantity
$\lambda_{\max} = \max\{ \lambda_1, |\lambda_{N-1}|\}$. 
Denote the smallest stationary probability by 
$\pi^*=\min\{ \pi(x)\mid x\in\Omega\}$.  Then
\begin{equation}
\label{eq:eigenvalues-mixing}
 \tau(\varepsilon) \leq (1-\lambda_{\max})^{-1}\,
    \log\bigg(\frac{1}{\varepsilon\, \pi^*}\bigg),
\end{equation}
see for example~\cite[Proposition~1]{sinclair}).

If $\lambda_{\max} = |\lambda_{N-1}|$ then in particular,
$\lambda_{N-1}$ must be negative, 
in which case $1-|\lambda_{N-1}| = 1+\lambda_{N-1}$.
For many chains we can apply a result of Diaconis and 
Saloff-Coste~\cite[p.702]{DSC} (see also~\cite{lazy}) to establish
an upper bound on $(1+\lambda_{N-1})^{-1}$.   
In particular, if there is a positive
probability of a null transition at any state then the following
special case of that result may be useful:
\begin{equation}
\label{convenient}
 (1 + \lambda_{N-1})^{-1} \leq \nfrac{1}{2}\, \max_{x\in\Omega}
   P(x,x)^{-1}.
\end{equation}
Another option is to work with the
\emph{lazy} version of the Markov chain $\mathcal{M}$,
by replacing the transition matrix $P$ by $(I+P)/2$. 
This ensures that all eigenvalues
are nonnegative and hence $\lambda_{\max}=\lambda_1$.
We say that a Markov chain $\mathcal{M}$ \emph{is lazy} if
$P(x,x)\geq \nfrac{1}{2}$ for all $x\in\Omega$.

Sinclair's \emph{multicommodity flow method}~\cite{sinclair}
provides an upper bound on $(1-\lambda_1)^{-1}$.
It is a generalisation of the \emph{canonical path} method
that Jerrum and Sinclair introduced in~\cite{JS-approx}.

Given a Markov chain $\mathcal{M}$ with uniform stationary distribution
on a state space $\Omega$, let $\mathcal{G}(\mathcal{M})$
be the underlying graph, where there is an edge from
$x$ to $y$ if and only if $P(x,y)>0$.
We assume that $\mathcal{M}$ is ergodic and time-reversible
with respect to the distribution $\pi$.  
Let $\mathcal{P}_{xy}$ be the set of all simple directed paths
from $x$ to $y$ in $\mathcal{G}(\mathcal{M})$, and define $\mathcal{P}
= \cup_{x,y} \mathcal{P}_{xy}$.   A \emph{flow} is a function
$f:\mathcal{P}\to [0,\infty)$ such that for all $x,y\in\Omega$
with $x\neq y$,
\[ \sum_{p\in\mathcal{P}_{xy}} f(p) = \pi(x)\pi(y).\]
(In the canonical path method,
there is only one flow-carrying path from between any two pairs of states.)

If the flow can be defined so that no transition of the chain 
is overloaded,
then the state space does not contain any ``bottlenecks''
and the Markov chain will be rapidly mixing.
To make this precise, the total flow through a transition $e=xy$ is
$f(e) = \sum_{p\ni e} f(p)$,   and the \emph{load} of 
$e$ is defined by $\rho(e) = f(e)/Q(e)$, where $Q(e) = \pi(x) P(x,y)$
is the \emph{capacity} of the transition $e=xy$.  (By time-reversibility,
$Q$ is well-defined.)
Finally, the  maximum load of the flow is
$\rho(f) = \max_e \rho(e)$, while $\ell(f)$ denotes the 
length of the longest path $p$ with $f(p)>0$.  
Sinclair~\cite[Corollary 6']{sinclair} proved that for any time-reversible
Marvov chain any any flow $f$,
\begin{equation}
\label{eq:congestion-bound}
 (1-\lambda_1)^{-1} \leq \rho(f)\,\ell(f).
\end{equation}

The next result specialises the multicommodity flow method to
ergodic, time-reversible Markov chains with the uniform stationary 
distribution over a set $\Omega$. It is obtained from 
(\ref{eq:eigenvalues-mixing}) and  (\ref{eq:congestion-bound}),
and allows two options for managing the smallest eigenvalue
$\lambda_{N-1}$.  

\begin{theorem} 
\label{thm:flow}
Let $\mathcal{M}$ be an ergodic time-reversible Markov chain with
uniform stationary distribution over $\Omega$.
Define $B$ to be 0, if it is known that $\lambda_{\max}=\lambda_1$ (for example
if $\mathcal{M}$ is lazy).  Otherwise, let $B$ be an upper bound 
on $(1+\lambda_{N-1})^{-1}$.
Then the mixing time of the Markov chain $\mathcal{M}$ satisfies
\[ \tau(\varepsilon) \leq \max\{\rho(f) \ell(f),B\} \,
    \big(\log |\Omega| + \log(\varepsilon^{-1})\big).\]
\end{theorem}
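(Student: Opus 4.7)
The plan is to combine the two ingredients already stated in the text, namely the eigenvalue bound \eqref{eq:eigenvalues-mixing} and the multicommodity flow bound \eqref{eq:congestion-bound}, and then handle the two cases (lazy or not) via the $B$ parameter. Since the stationary distribution $\pi$ is uniform, we immediately have $\pi^* = 1/|\Omega|$, so
\[
 \log\bigl(1/(\varepsilon\, \pi^*)\bigr) = \log|\Omega| + \log(\varepsilon^{-1}).
\]
It therefore suffices to establish
\[
(1-\lambda_{\max})^{-1} \leq \max\{\rho(f)\,\ell(f),\, B\}.
\]

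First I would split $\lambda_{\max}=\max\{\lambda_1,|\lambda_{N-1}|\}$, which gives
\[
(1-\lambda_{\max})^{-1} = \max\bigl\{(1-\lambda_1)^{-1},\,(1-|\lambda_{N-1}|)^{-1}\bigr\}.
\]
The first term is controlled by \eqref{eq:congestion-bound}: $(1-\lambda_1)^{-1}\leq \rho(f)\,\ell(f)$. For the second term, I would argue by cases on the sign of $\lambda_{N-1}$. If $\lambda_{N-1}\geq 0$, then $|\lambda_{N-1}|=\lambda_{N-1}\leq \lambda_1$, so $(1-|\lambda_{N-1}|)^{-1}\leq (1-\lambda_1)^{-1}\leq \rho(f)\,\ell(f)$; in particular, when $\mathcal{M}$ is lazy all eigenvalues of $P$ are nonnegative, so this case always applies and $B=0$ is legitimate. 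If $\lambda_{N-1}<0$, then $1-|\lambda_{N-1}|=1+\lambda_{N-1}$, so by the choice of $B$ we have $(1-|\lambda_{N-1}|)^{-1}=(1+\lambda_{N-1})^{-1}\leq B$.

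Combining the two cases yields $(1-\lambda_{\max})^{-1}\leq \max\{\rho(f)\,\ell(f),B\}$, and substituting into \eqref{eq:eigenvalues-mixing} completes the proof. There is really no hard step here: the only thing to be careful about is the bookkeeping around the sign of $\lambda_{N-1}$ and the fact that laziness forces $\lambda_{N-1}\geq 0$ (so that the ``$B=0$'' option is justified). Everything else is a direct substitution. I would invoke \eqref{convenient} only as a remark showing one convenient way to produce such a $B$ in practice, rather than as part of the proof.
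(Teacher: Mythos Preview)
Your proof is correct and follows exactly the approach the paper intends: it combines the eigenvalue bound \eqref{eq:eigenvalues-mixing} with Sinclair's flow bound \eqref{eq:congestion-bound}, uses $\pi^*=1/|\Omega|$ from uniformity, and handles the smallest eigenvalue by the case split on the sign of $\lambda_{N-1}$. The paper does not give a formal proof beyond stating that the theorem ``is obtained from \eqref{eq:eigenvalues-mixing} and \eqref{eq:congestion-bound}, and allows two options for managing the smallest eigenvalue $\lambda_{N-1}$'', so your write-up is simply a careful unpacking of that sentence.
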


When the multicommodity flow method is applied to the problem of
sampling graphs from $G(\kvec)$, the start and end states are
graphs $G,G'$ with degree sequence $\kvec$.  Usually the definition 
of the flow is guided by the symmetric difference $H=G\triangle G'$
of $G$ and $G'$, and each step of a flow-bearing path is designed
to make the symmetric difference smaller.
\subsection{The switch chain}\label{s:switch}

The \emph{switch chain} (also called \emph{swap chain}~\cite{MES}
and \emph{Diaconis chain}~\cite{BBV})
is the simplest Markov chain
with uniform distribution over $G(\kvec)$.  A transition of the switch chain
deletes two edges and inserts two edges,
while maintaining the degree sequence and without
introducing any repeated edges.  This is illustrated in Figure~\ref{fig:switch}, at the
start of Section~\ref{s:markov}.
This chain was introduced by Diaconis and Gangolli~\cite{DG} in 1995
in order to sample contingency tables 
(matrices of nonnegative integers) with given row and column sums.
The transitions can be easily adapted to bipartite graphs or directed graphs.
The switch chain is ergodic for graphs, and for bipartite graphs,
with given degrees.

In 1999, Kannan, Tetali and Vempala~\cite{KTV} considered the switch chain
for sampling bipartite graphs with given degrees.  They used
the auxiliary graph $\Gamma(\kvec)$ obtained
from Tutte's construction (modified to bipartite graphs) in order
to define a multicommodity flow, and gave details only for the case
of regular degrees.
Unfortunately, there is a bug in their argument (specifically, in the proof
of~\cite[Theorem~4.1]{KTV}) which seems to be fatal.\footnote{The 
symmetric difference of two perfect matchings in $\Gamma(\kvec)$ consists
of the union of disjoint cycles.
However, when mapped back to the symmetric difference of the
two corresponding bipartite graphs, an alternating cycle in $\Gamma(\kvec)$  
may correspond to an alternating \emph{walk}, which could have
linearly many repeated vertices (vertices which are visited more than once on the walk).  The argument of~\cite{KTV} does not take this into account.}

\bigskip

\begin{figure}[ht!]
\begin{center}
\hspace{\dimexpr-\fboxrule-\fboxsep\relax}\fbox{
\begin{minipage}{0.8\textwidth}
\begin{tabbing}
  \textsc{The switch chain }\\
    X \= \kill
  \> XXXs \= \kill 
  \emph{Input:}\>\> graphical sequence $\kvec$ and positive integer $T=T(\kvec)$\\
  \emph{Output:}\>\> element of $G(\kvec)$\\
  \\
    let $G$ be an arbitrary initial state\\
    for $t=0,\ldots, T-1$ do\\
    \> choose two non-adjacent distinct edges $\{ a,b\}, \{ c,\, d\}$ u.a.r. \\
    \>  choose a perfect matching $M$ of $\{a,b,c,d\}$ u.a.r.\\
    X \= \kill
    \>  if $M\cap E(G) = \emptyset$ then\\
    \> X \= \kill
    \> \> delete the edges $\{a,b\}, \{ c,d\}$ and add the edges of $M$\\
    output $G_T$
\end{tabbing}
\end{minipage}}
\caption{The switch chain for sampling from $G(\kvec)$}
\label{fig:switch-code}
\end{center}
\end{figure}

One implementation of the switch chain for $G(\kvec)$ is given in
Figure~\ref{fig:switch-code}.
Cooper, Dyer and Greenhill~\cite{CDG} analysed the mixing time of 
the lazy version of this chain, restricted to regular degree sequences.
(That is, they replaced the transition matrix $P$ arising from the above 
procedure by $(I+P)/2$, which is equivalent to inserting the instruction
``With probability $\nfrac{1}{2}$, do nothing'' just inside the for-loop.)
However, this is unnecessary, as the transition 
procedure given in Figure~\ref{fig:switch-code} guarantees that 
$(1+\lambda_{\max})^{-1} \leq \nfrac{3}{2}$,  by (\ref{convenient}). 
Hence we can take $B=\nfrac{3}{2}$ in Theorem~\ref{thm:flow}.
Using a multicommodity flow argument,
Cooper et al.~established a polynomial bound on the mixing time
for any regular degree sequence~\cite{CDG,CDG-corrigendum}.

\begin{theorem} \emph{\cite{CDG,CDG-corrigendum}}\
For any $k=k(n)\geq 3$, the switch chain on $G(n,k)$ has
mixing time
\[ \tau(\varepsilon)\leq k^{23}\, n^8\, \big( kn\log(kn) + \log(\varepsilon^{-1})\big).
\]
\end{theorem}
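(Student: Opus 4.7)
The plan is to apply Theorem~\ref{thm:flow} with $B=\nfrac{3}{2}$, as justified in the paragraph preceding the statement, and to use the configuration-model bound $\log|G(n,k)|=O(kn\log(kn))$. It therefore suffices to construct a multicommodity flow $f$ on the underlying graph of the switch chain with $\rho(f)\,\ell(f)=O(k^{23}n^8)$. Since the stationary distribution is uniform and the transition probability of any valid switch $G\to G'$ is $\Theta((kn)^{-2})$, the capacity satisfies $Q(e)=\Theta(|G(n,k)|^{-1}(kn)^{-2})$. Bounding $\rho(f)$ then reduces to showing that for every transition $e$, the number of ordered pairs $(G,G')$ whose canonical path uses $e$ is at most $|G(n,k)|\cdot\mathrm{poly}(n,k)$ with the right exponents.

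For the paths, fix an ordered pair of distinct $G,G'\in G(n,k)$ and set $H=E(G)\triangle E(G')$. Because $\deg_G(v)=\deg_{G'}(v)=k$ at every $v$, the graph $H$ has even degree and the same number of \emph{blue} edges (from $G\setminus G'$) and \emph{red} edges (from $G'\setminus G$) at each vertex. Hence $H$ decomposes into edge-disjoint closed alternating walks $W_1,\ldots,W_r$; I would fix this decomposition canonically by repeatedly peeling off a walk starting at the lexicographically least unused blue edge. The canonical path from $G$ to $G'$ processes the walks in order: while processing $W_j$ from the current state $Z$ (which already equals $G'$ on $W_1,\ldots,W_{j-1}$ and equals $G$ on the remaining walks), I perform switches that exchange two consecutive blue edges of $W_j$ for the two intervening red edges. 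A good switch decreases $|E(Z)\triangle E(G')|$ by four, so $\ell(f)=O(|H|)=O(kn)$.

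The main obstacle is that a naive switch along $W_j$ may be \emph{bad}: one of the two prospective red edges may already be present in $Z$, the walk may revisit a vertex inside the four-point window, or $W_j$ may be too short for a direct four-edge exchange. For each failure mode I would specify a local correction using $O(1)$ auxiliary switches that detours through a nearby state differing from the canonical one in only a bounded number of edges. The analysis then rests on an encoding lemma: for each transition $e=(Z,Z')$, define $\Phi_e(G,G')$ by overlaying onto $Z$ the unprocessed portion of $H$ (the blue and red edges in walks not yet fully resolved at $e$), together with pointers of bit-length $O(\log(kn))$ indicating the active walk, the position along it, and any auxiliary edges used for a bad-switch correction. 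The pair $(G,G')$ is determined by $(e,\Phi_e,\text{pointers})$, and $\Phi_e$ differs from a $k$-regular graph in only $O(1)$ degree defects inherited from the unprocessed structure crossing $e$. Counting the near-regular graphs that can arise as $\Phi_e$ gives $N(e)\leq|G(n,k)|\cdot\mathrm{poly}(n,k)$, which combined with the capacity estimate yields $\rho(f)=O(k^{22}n^7)$, and the target $\rho(f)\,\ell(f)=O(k^{23}n^8)$. The delicate part, and the source of the high polynomial exponent, is the case analysis for bad switches: each extra degree of freedom introduced by a correcting detour contributes another $\mathrm{poly}(n,k)$ factor to the encoding count, and the bookkeeping required to close out all cases simultaneously is what forces the large exponents in the final bound.
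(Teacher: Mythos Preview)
Your high-level plan matches the paper's: apply Theorem~\ref{thm:flow}, bound $\log|G(n,k)|$ via the configuration model, define paths through the symmetric difference $H=G\triangle G'$, and control the load by an encoding argument showing the encoding set is at most $\mathrm{poly}(n,k)\cdot|G(n,k)|$. Two structural differences are worth noting. First, the paper does \emph{not} fix a single canonical path: it identifies a set $\Psi(G,G')$ of distinct decompositions of $H$ into what are called 1-circuits and 2-circuits, routes one path $\gamma_\psi(G,G')$ for each $\psi\in\Psi(G,G')$, and splits the $\pi(G)\pi(G')$ flow equally among them. This averaging absorbs the freedom you describe as ``auxiliary switches for bad cases'' and is what keeps the encoding bounded; your alternative of fixing choices deterministically and recording pointers should also work, but the paper's randomisation is how the actual argument is organised. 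Second, the encoding in the paper is the clean algebraic object $L=G+G'-Z$ (as $n\times n$ symmetric matrices): $L$ automatically has row sums $\kvec$ and, thanks to the careful processing of 1- and 2-circuits, at most four entries in $\{-1,2\}$. The ``critical lemma'' (Lemma~\ref{lem:CDG-encodings}) then bounds $|\mathcal{L}(Z)|\le 2k^6n^6\,|G(n,k)|$ by showing that at most three switches convert any encoding into a genuine element of $G(n,k)$; this is the step that drives the final exponent, and your description of $\Phi_e$ as ``overlay the unprocessed part of $H$ onto $Z$'' is morally the same object but less precise about why the defect count stays at $O(1)$.
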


\begin{proof}  (Sketch.)\
Cooper et al.\ defined a multicommodity flow for the switch
chain on $G(n,k)$ and proved that maximum load $\rho(f)$ 
is bounded above by a polynomial in $n$ and $k$.   
(A brief outline of the argument is given below.)
The length of any flow-carrying path is at most $kn/2$.
Next,
\begin{equation}
\label{Gk-size}
   |G(\kvec)| \leq \frac{M!}{2^{M/2}\, (M/2)!\, \prod_{j\in [n]} k_j!}
  \leq \exp(\nfrac{1}{2} M\log M\big),
\end{equation}
where the first inequality follows from the configuration model.
Applying Theorem~\ref{thm:flow} completes the proof. 
\end{proof}

Greenhill~\cite{directed} used a similar argument to show that the switch chain
for $k$-in, $k$-out (regular) directed graphs is rapidly mixing 
for any $k$ with $1\leq k=k(n)\leq n-1$ and all $n\geq 4$.

\bigskip

We now give some more details on the design and analysis of the 
multicommodity flow for the switch chain.
The flow between two graphs $G,G'$ is defined with respect to the 
symmetric difference $H=G\triangle G'$.  Note that the
symmetric difference $H$ need not be regular, even if $G,G'$ are both regular.
Greenhill and Sfragara~\cite{GS} observed that the multicommodity flow
defined in~\cite{CDG} for regular degrees can also be used for 
irregular degrees.
In fact, almost all parts of the analysis of the multicommodity
flow also extends immediately to irregular degree sequences.
For this reason, the description below is presented in the
general setting of $G(\kvec)$.

Starting from the symmetric difference $H=G\triangle G'$,
Cooper et al.\ described how to decompose this symmetric difference
into a sequence of smaller, edge-disjoint structures they called
1-circuits and 2-circuits.  They identified several different ways to
do this, parameterised by a set $\Psi(G,G')$. 
For each $\psi\in\Psi(G,G')$ , the (canonical) path from $G$ to $G'$
indexed by $\psi$ is denoted $\gamma_\psi(G,G')$. 
This path is created by ``processing'' each of the 1-circuits and 2-circuits,
in a specified order.  Processing a circuit changes the status of its edges
from agreeing with $G$ to agreeing with $G'$, and adds some transitions
to the path $\gamma_\psi(G,G')$.  Finally, the flow from $G$
to $G'$ divided equally among these $|\Psi(G,G')|$ paths.

Once the multicommodity flow is defined, it remains to prove that 
no transition is too heavily loaded.  Suppose that $e=(Z,Z')$ is a
transition of the chain which is used on the path $\gamma_\psi(G,G')$.
A common approach is to define an \emph{encoding} $L$ of a state $Z$, which 
records information about the symmetric difference $H=G\triangle G'$ to help
us recover $G$ and $G'$ from the transition $(Z,Z')$ and the encoding $L$.
This approach will work if the set of possible encodings for $Z$ is at most
polynomially larger than $|G(\kvec)|$, and there are at most polynomially-many
options for $(G,G')$ once $Z,Z',L$ are all specified.
In~\cite{CDG}, encodings are defined by
\[ L + Z = G + G'\]
where $G, G'$ and $Z$ are identified with their $n\times n$ adjacency 
matrices.   Then $L$ is an $n\times n$ symmetric matrix with row sums
given by $\kvec$, and with almost all entries equal to 0 or 1.
In fact, due to the careful way that 1-circuits and 2-circuits are
processed, $L$ has at most 4 entries equal to $-1$ or 2, and all
other entries are 0 or 1.  Hence $L$ may also be thought of as a graph
with most edges labelled 1, and at most four \emph{defect edges}
which may be labelled $-1$ or 2.  The sum of all edge labels at vertex $j$ 
must equal the degree $k_j$ of $j$, for all $j\in [n]$. 
(There are some other constraints
about the structure of the defect edges, stated in~\cite[Lemma~2]{CDG}.)

Given an encoding $L$, by removing the defect edges we obtain
a graph with degree sequence which is very close to $\kvec$.
This gives a connection between the ratio $|\mathcal{L}(Z)|/|G(\kvec)|$ 
and the ratio $|G'(\kvec)|/|G(\kvec)|$ from the definition of
P-stability (\ref{def:P-stable}).
This connection is explored further in Section~\ref{s:stability}.
In the regular case, we have the following bound.

\begin{lemma} \emph{\cite[Lemma~4]{CDG}}\
\label{lem:CDG-encodings}
For any $Z\in G(n,k)$, let $\mathcal{L}(Z)$ denote the set of encodings $L$ 
such that every entry of $L+Z$ belongs to $\{0,1,2\}$. 
Then $|\mathcal{L}(Z)|\leq 2 k^6 n^6\, |G(n,k)|$.
\end{lemma}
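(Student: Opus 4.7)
I would begin by pinning down the structure of an encoding $L \in \mathcal{L}(Z)$. Any such $L$ arises as $L = G + G' - Z$ for some pair $G, G' \in G(n,k)$, so it is a symmetric integer matrix with zero diagonal, every row sum equal to $k$, and entries in $\{-1, 0, 1, 2\}$. The defining condition that $L + Z$ has entries in $\{0, 1, 2\}$ forces any entry $L_{ij} = -1$ to sit at an edge of $Z$ and any entry $L_{ij} = 2$ to sit at a non-edge of $Z$. I will call these the \emph{defect entries} of $L$. Invoking the structural result of~\cite[Lemma~2]{CDG} mentioned just before the lemma, the number of defect entries is at most $4$, and the collection of defect edges satisfies further rigidity constraints.

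Next I would parameterize each $L \in \mathcal{L}(Z)$ as a pair $(D, L_0)$, where $D$ is the list of at most four defect entries (positions and values) and $L_0 = L - D$ is a symmetric $0/1$ matrix, hence the adjacency matrix of a simple graph. The row-sum sequence $\kvec'$ of $L_0$ is determined entirely by $D$: it differs from $(k, \ldots, k)$ only at the at most $8$ vertices incident with a defect, and at each such vertex by at most $4$ in absolute value. This yields the crude bound
\begin{equation*}
|\mathcal{L}(Z)| \leq \sum_{D} |G(\kvec'(D))|,
\end{equation*}
where $D$ ranges over defect configurations compatible with $Z$ and with~\cite[Lemma~2]{CDG}.

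I would then control the two factors separately. For the outer sum, each $-1$ defect must lie on one of the $kn/2$ edges of $Z$ and each $+2$ defect on one of the $\binom{n}{2}$ non-edges of $Z$; combining with the extra structural constraints from~\cite[Lemma~2]{CDG} to cut down the allowed joint configurations, this gives a polynomial bound on the number of defect configurations in terms of $n$ and $k$. To bound $|G(\kvec')|$ I would use a short switching argument: for each vertex whose degree in an $H \in G(\kvec')$ deviates from $k$, perform a local surgery (delete or insert a few edges, then repair with an edge swap) to restore the degree, obtaining a graph $H^\ast \in G(n,k)$ after at most $8$ surgeries. Bounding the number of surgery sequences reversing to a given $H^\ast$ by something polynomial in $n,k$ yields an inequality of the form $|G(\kvec')| \leq p(n,k)\, |G(n,k)|$ with $p$ depending only on the deviation pattern of $\kvec'$.

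The main obstacle is the bookkeeping in this last step: to land inside the declared bound $2 k^6 n^6 \,|G(n,k)|$, I must keep careful track of how many of each type of defect occur (as the cost of counting $-1$ defects is $O(kn)$ per defect while counting $+2$ defects is $O(n^2)$ per defect), and pair this with a correspondingly tight count of surgery reversals. The structural constraints of~\cite[Lemma~2]{CDG} are essential here, both to trim the defect enumeration and to ensure that the deviations from $k$-regularity are localised enough for the switching reversals to be cheap. Once this accounting is done, summing the product over the at most $4$ possible defect totals gives the claimed bound.
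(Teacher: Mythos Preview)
Your approach is different from the paper's, and while it is a plausible route to \emph{some} polynomial bound, it is unlikely to recover the stated constant $2k^6 n^6$ without essentially reinventing the paper's argument.

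The paper does not strip off the defects and then repair the degree sequence of the resulting $0/1$ matrix $L_0$. Instead, it extends the switch operation to act directly on encodings (matrices with entries in $\{-1,0,1,2\}$), so that a single switch can simultaneously delete a $-1$ entry and a $2$ entry, or turn a $2$ into a $1$, etc. The key combinatorial fact is that at most \emph{three} such extended switches suffice to transform any encoding into an element of $G(n,k)$. Since each switch is specified by roughly $k^2 n^2$ data (two edges, each from about $kn/2$ choices), the number of encodings mapping to a fixed graph is at most about $(k^2n^2)^3 = k^6 n^6$, which is exactly the stated bound.

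In your decomposition $L = D + L_0$, once you remove the defects you are left with a simple graph $L_0$ whose degree sequence $\kvec'$ can deviate from $k$-regular at up to eight vertices, with individual deviations as large as $2$ (from a $+2$ defect). Repairing this with ordinary switches on simple graphs will typically require more than three operations, and your own estimate of ``up to 8 surgeries'' would naively yield an exponent far larger than $6$. You also pay a separate factor for enumerating $D$ itself. Even exploiting the structural constraints of \cite[Lemma~2]{CDG} aggressively, it is hard to see this two-stage count collapsing to $2k^6 n^6$; the paper's one-stage argument, switching directly on the encoding, is what makes three operations sufficient and the bound tight.
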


\begin{proof} (Sketch.)\
This was proved by extending the switch operation to encodings, and showing 
that at most three switches suffice to transform an encoding into an
element of $G(n,k)$. The factor $2 k^6 n^6$ is an upper bound on the
number of encodings which can be transformed into an arbitrary element of
$G(n,k)$.
\end{proof}

In fact, Greenhill and Sfragara observed that~\cite[Lemma~4]{CDG} 
(restated as Lemma~\ref{lem:CDG-encodings} above) was
the \emph{only} part of the argument from~\cite{CDG} which relied on the 
regularity assumption for its proof.
For this reason, they called it the ``critical lemma''.
The proof of the lemma is essentially a switching argument, used
to find the relative sizes of two sets.  For irregular degrees, it was
no longer possible to prove that a suitable switch could always be
found in any encoding.

Greenhill and Sfragara bypassed this problem by using a more powerful
switching operation to prove the critical lemma.  Rather than use a switch,
which swaps edges for non-edges around an alternating 4-cycle,
they used a operation involving an alternating 6-cycle (deleting 3 edges and
inserting 3 edges at a time), illustrated in Figure~\ref{fig:6-switch}.  
This operation gave them sufficient 
flexibility to prove the critical lemma when $k_{\max}$ is not too large.  

\medskip

\begin{figure}[ht!]
\begin{center}
\begin{tikzpicture}[scale=0.85]
\draw [fill] (0,0) circle (0.1); \draw [fill] (0,2) circle (0.1);
\draw [fill] (2,0) circle (0.1); \draw [fill] (2,2) circle (0.1);
\draw [fill] (4,0) circle (0.1); \draw [fill] (4,2) circle (0.1);
\draw [thick,-] (0,0) -- (0,2);  \draw [thick,-]  (2,0) -- (4,0);
\draw [thick,-,dashed] (0,0) -- (2,0);  \draw [thick,-,dashed]  (4,0) -- (4,2);
\draw [thick,-] (4,2) -- (2,2);  \draw [thick,dashed,-]  (2,2) -- (0,2);
\draw [<->, line width=3pt] (5.5,1) -- (6.5,1);
\begin{scope}[shift={(8,0)}]
\draw [fill] (0,0) circle (0.1); \draw [fill] (0,2) circle (0.1);
\draw [fill] (2,0) circle (0.1); \draw [fill] (2,2) circle (0.1);
\draw [fill] (4,0) circle (0.1); \draw [fill] (4,2) circle (0.1);
\draw [thick,dashed,-] (0,0) -- (0,2);  \draw [thick,dashed,-]  (2,0) -- (4,0);
\draw [thick,-] (0,0) -- (2,0);  \draw [thick,-]  (4,0) -- (4,2);
\draw [thick,-,dashed] (4,2) -- (2,2);  \draw [thick,-]  (2,2) -- (0,2);
\end{scope}
\end{tikzpicture}
\caption{Switching edges around a 6-cycle}
\label{fig:6-switch}
\end{center}
\end{figure}

\begin{theorem}
\emph{\cite[Theorem 1.1]{GS}}\
Let $\kvec$ be a graphical degree sequence. 
If all entries of $\kvec$ are positive and $3\leq k_{\max}\leq \nfrac{1}{3}\sqrt{M}$ then the mixing time of 
the switch chain on $G(\kvec)$ satisfies
\[ \tau(\varepsilon) \leq k_{\max}^{14}\, M^9 \big( \nfrac{1}{2} M\log M
   + \log(\varepsilon)\big).\]
\label{thm:GS}
\end{theorem}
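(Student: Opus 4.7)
The plan is to apply the multicommodity flow machinery of Theorem~\ref{thm:flow} to the switch chain on $G(\kvec)$, reusing as much of the Cooper--Dyer--Greenhill framework from~\cite{CDG} as possible. First, note that the transition procedure in Figure~\ref{fig:switch-code} rejects whenever $M \cap E(G) \neq \emptyset$, so the self-loop probability at every state is at least $\tfrac{1}{3}$ (out of three perfect matchings on four vertices, at least one is a non-switch). Hence $(1+\lambda_{N-1})^{-1} \leq \tfrac{3}{2}$ by (\ref{convenient}), and we may take $B = \tfrac{3}{2}$ in Theorem~\ref{thm:flow}. The size bound $|G(\kvec)| \leq \exp(\tfrac{1}{2} M \log M)$ from (\ref{Gk-size}) gives $\log|\Omega| \leq \tfrac{1}{2} M \log M$, so it suffices to prove $\rho(f)\, \ell(f) = O(k_{\max}^{14} M^9)$ for a well-chosen flow $f$.

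Second, I would adopt verbatim the construction of the flow from~\cite{CDG}. Given $G, G' \in G(\kvec)$, decompose the symmetric difference $H = G \triangle G'$ into edge-disjoint 1-circuits and 2-circuits, process them in a prescribed order to obtain a canonical path $\gamma_\psi(G,G')$ of switch transitions, and split the flow $\pi(G)\pi(G')$ evenly over the index set $\Psi(G,G')$. Since each transition changes exactly two edges and the path only touches edges of $H$, we have $\ell(f) \leq M/2$. Following the argument of~\cite[Sections 3--4]{CDG}, the only component of the load bound whose proof genuinely uses regularity is the ``critical lemma'' of~\cite[Lemma~4]{CDG}, restated here as Lemma~\ref{lem:CDG-encodings}: every other step of the load estimate is a combinatorial counting argument which is insensitive to whether $\kvec$ is regular.

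The main obstacle, then, is to prove an analogue of Lemma~\ref{lem:CDG-encodings} for irregular $\kvec$. An encoding $L$ of a state $Z$ on a canonical path $\gamma_\psi(G,G')$ carrying transition $(Z,Z')$ satisfies $L + Z = G + G'$; it is a symmetric $\{0,1\}$-matrix with row sums $\kvec$ apart from a bounded number of defect entries. The regular-case proof shows that a single switch (Figure~\ref{fig:switch}) always suffices to remove the defects and map $L$ into $G(\kvec)$, giving the factor $k^6 n^6$. For irregular degrees this is false: in an unlucky configuration around the defect edges, no switch is available. The fix is to replace switches by the 6-cycle operation of Figure~\ref{fig:6-switch}, which removes three edges and inserts three edges around an alternating 6-cycle. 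The extra slack provided by the 6-cycle operation is enough to guarantee the existence of a defect-reducing move whenever $3 \leq k_{\max} \leq \tfrac{1}{3}\sqrt{M}$; the particular constant $\tfrac{1}{3}$ arises by ensuring that, when we try to extend a defect edge along two further edge/non-edge choices, the number of forbidden extensions is strictly less than the number of candidates. Counting the number of 6-cycle operations that can map an arbitrary element of $G(\kvec)$ to a given encoding gives $|\mathcal{L}(Z)| / |G(\kvec)| = O(k_{\max}^{c_1} M^{c_2})$ for small explicit constants $c_1, c_2$.

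Finally, the same encoding $L$ may be consistent with polynomially many pairs $(G,G')$ passing through the fixed transition $(Z,Z')$, contributing a further polynomial factor, and the probability $Q(e)$ of a specified transition is $\Omega(1/(k_{\max} M)^2)$ under the uniform stationary distribution. Multiplying these factors in the definition of $\rho(f)$ and combining with $\ell(f) \leq M/2$ yields $\rho(f)\,\ell(f) \leq k_{\max}^{14} M^9$. Plugging into Theorem~\ref{thm:flow} with the logarithmic size bound from (\ref{Gk-size}) gives the claimed bound
\[
\tau(\varepsilon) \leq k_{\max}^{14}\, M^9 \bigl(\tfrac{1}{2} M \log M + \log(\varepsilon^{-1})\bigr).
\]
The only delicate point in the whole argument is proving the 6-cycle critical lemma under the hypothesis $k_{\max} \leq \tfrac{1}{3}\sqrt{M}$; everything else is a direct inheritance from~\cite{CDG}.
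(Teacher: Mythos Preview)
Your proposal is correct and follows exactly the approach the paper describes: reuse the Cooper--Dyer--Greenhill multicommodity flow verbatim, observe that every step of the load analysis carries over to irregular $\kvec$ except the critical lemma (Lemma~\ref{lem:CDG-encodings}), and re-prove that lemma using the 6-cycle switching of Figure~\ref{fig:6-switch} under the hypothesis $k_{\max}\leq \tfrac{1}{3}\sqrt{M}$. One small slip: in the regular case the encoding is mapped into $G(n,k)$ by at most \emph{three} switches, not a single one---that is precisely what produces the $k^6 n^6$ factor---but this does not affect your overall argument.
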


In Section~\ref{s:stability} we will discuss 
connections between the stability of degree sequences
and rapid mixing of the switch chain. 
First we discuss some results regarding the switch chain
for bipartite graphs, directed graphs and hypergraphs, and some
related topics.

\subsubsection{Bipartite graphs and directed graphs}

We have seen in Theorem~\ref{thm:JSV}
that the algorithm of Jerrum, Sinclair and Vigoda~\cite{JSV}
gives an FPAUS for sampling bipartite graphs with any given
bipartite degree sequence.  However, there is still interest
in studying the switch chain for bipartite graphs, as it
is a very natural and simple process.

A 1-regular bipartite graph is a permutation. Diaconis and
Shahshahani~\cite{DShah81} studied the Markov chain with state
space $S_n$, the set of all permutations of $[n]$, and transitions
defined as follows:  with probability $1/n$ do nothing, and otherwise
choose a transposition $(i\,j)$ uniformly at random (where $i,j\in [n]$
are distinct), and multiply the current permutation by $(i\, j)$ on 
the left, say.  This random transposition chain is very closely
related to the switch chain for a 1-regular bipartite degree sequence
(the set of allowed transitions is identical, though the
probability of each transition differs between the two chains).  Diaconis and
Shahshahani gave a very complete analysis of the random transposition
chain in~\cite{DShah81}, calculating the eigenvalues and proving
that the chain exhibits the ``cutoff phenomenon'', see~\cite[Chapter~18]{LPW}.  That is,
the total variation distance to stationarity drops very quickly from
$1-o(1)$ to $o(1)$ when the chain has taken $\nfrac{1}{2} n\log n + \Theta(n)$
steps (this is cutoff at $\nfrac{1}{2} n\log n$ with window of order $n$).

Other than~\cite{DShah81}, the first analysis of the switch chain for
sampling bipartite graphs with given degrees
was the work of Kannan et al.~\cite{KTV}, discussed earlier.

The
multicommodity flow arguments from~\cite{CDG,GS} can be simplified
when restricted to bipartite graphs, as the symmetric difference of
two bipartite graphs with the same degree sequence
can be decomposed into edge-disjoint alternating cycles, and these
are relatively easy to handle.  The resulting bounds on the mixing
time of the switch chain for bipartite graphs with given degrees were
recently presented in~\cite[Appendix~A]{DGKRS}.
These show that the switch chain is rapidly mixing for any regular
bipartite degree sequence, and for arbitrary bipartite degree sequences
when the maximum degree is not too large compared to the number of edges.
As usual, the mixing time bounds are very high-degree polynomials.

A bipartite degree sequence is \emph{half-regular} if all degrees
on one side of the bipartition are regular.
Miklos, Erd\H{o}s and Soukup \cite{MES} proved 
that the switch Markov chain is rapidly
mixing for half-regular bipartite degree sequences. 
Their proof also used the multicommodity flow method, but the
flow is defined differently to the Cooper--Dyer--Greenhill flow described
above.   

For some directed degree sequences, the switch chain fails to
connect the state space, as it cannot reverse the orientation of
a directed 3-cycle.  
Rao et al.~\cite{RJB} observed that by the Markov chain which
performs switch moves and (occasionally) reverse directed 3-cycles,
is ergodic for any directed degree sequence.  They noted that
for many degree sequences, this additional move did not seem to be
needed in order to connect the state space.  This was confirmed by the 
work of 
Berger and M{\" u}ller-Hannemann~\cite{BM-H} and LaMar~\cite{lamar},
who characterised degree sequences for which the switch chain is
irreducible.

Greenhill and Sfragara~\cite[Theorem 1.2]{GS} adapted their argument
to directed graphs, proving a similar result to Theorem~\ref{thm:GS}.
As well as an upper bound on the maximum degree,~\cite[Theorem 1.2]{GS}
also assumes that the switch chain connects the state space.
Their argument built on 
Greenhill's analysis~\cite{directed} of the switch chain for directed graphs,
replacing the proof of the ``critical lemma'' from~\cite{directed}
by one which did not require regularity.  

\subsubsection{The augmented switch chain and the Curveball chain}

Erd\H{o}s et al.~\cite{EKMS} considered the
switch chain augmented by an additional transition, namely switching
the edges around an alternating 6-cycle as shown in Figure~\ref{fig:6-switch}.
They called this transition a \emph{triple swap}.   We will refer to this chain as
the \emph{augmented switch chain}.
Building on the analysis from~\cite{MES}, 
Erd\H{o}s et al.~\cite[Theorem~10]{EKMS} proved that
the augmented switch chain for half-regular bipartite degree sequences remains
rapidly mixing in the presence of set of forbidden edges given by
the union of a perfect matching and a star.
They also described an algorithm (similar to the Havel--Hakimi algorithm)
for constructing a single
realization~\cite[Theorem~9]{EKMS}, to be used as the initial state.
Since directed graphs can be modelled as bipartite graphs with a forbidden
perfect matching, their algorithm also gives an FPAUS for directed
graphs with specified in-degrees and out-degrees, where (say)
the sequence of in-degrees is regular.  This explains the
addition of the triple swap transitions, without which the
chain might not be irreducible for some directed degree sequences.
By avoiding a star, the problem becomes \emph{self-reducible}~\cite{JVV}, 
which leads to an FPRAS for approximating
the number of bipartite graphs with given half-regular degree sequence
and some forbidden edges. 
As mentioned earlier, the algorithm
of Jerrum, Sinclair and Vigoda~\cite[Corollary 8.1]{JSV} can also
be applied to this problem.

Erd\H{o}s et al.~\cite{EMMS} gave 
new conditions on bipartite
and directed degree sequences which guarantee rapid mixing of the
augmented switch chain.  In particular, suppose that a bipartite
degree sequence has degrees $\svec=(s_1,\ldots, s_a)$ in one part
and degrees $\tvec=(t_1,\ldots, t_b)$  in the other, where $a+b=n$.  Let 
$s_{\max}, s_{\min}, t_{\max}, t_{\min}$ be the maximum and minimum degrees
on each side.  If all degrees are positive and
\[ (s_{\max} - s_{\min}-1)(t_{\max}-t_{\min}-1)\leq \max\{ 
    s_{\min}\, (a-t_{\max}),\, t_{\min}\, (b - s_{\max})\}\]
then the augmented switch chain on the set of bipartite graphs
with bipartite degree sequence $(\svec,\tvec)$ is rapidly
mixing~\cite[Theorem 3]{EMMS}.  They applied this result to the analysis
of the bipartite Erd\H{o}s--R{\' e}nyi model $\mathcal{G}(a,b,p)$, 
with $a$ vertices in one side of the bipartition, $b$ vertices on the other
and each possible edge between the two parts is included with probability $p$.
Erd\H{o}s et al.~\cite[Corollary 13]{EMMS}  proved that
if $p$ is not too close to~0 or~1 then the augmented switch chain
is rapidly mixing for the degree sequence arising from $\mathcal{G}(a,b,p)$,
with high probability as $n\to\infty$, where $n=a+b$.
They also proved analogous results for
directed degree sequences~\cite[Theorem~4, Corollary~14]{EMMS}.  
To prove their results, they adjusted the multicommodity argument
from~\cite{EKMS,MES} and gave new proofs of the ``critical lemma''
for that argument.

The Curveball chain, introduced by Verhelst~\cite{Verhelst},
is another Markov chain for sampling bipartite
graphs with given degrees, which chooses two vertices on one
side of the bipartition and randomises their neighbourhoods,
without disturbing the degrees or set of
common neighbours of the chosen vertices.
Carstens and Kleer~\cite{CK2018} showed that the
Curveball chain is rapidly mixing whenever the switch chain is
rapidly mixing.

\subsubsection{New classes from old}

Erd\H{o}s, Mikl\'os and Toroczkai~\cite{EMT2018} described a novel
way to expand the class of degree sequences (and bipartite degree
sequences, and directed degree sequences) for which the switch
chain is known to be rapidly mixing.  Their approach utilised
canonical degree sequence decompositions, introduced by
Tyshkevich~\cite{Tysch}, and extended this concept to bipartite
and directed degree sequences.  Using the decomposition theorem
from~\cite{EMT2015}, if the switch chain (or augmented switch chain)
is rapidly mixing on each component of this decomposition, then
it is rapidly mixing on the original degree sequence.

\subsubsection{Improved bounds using functional inequalities}

Functional inequalities can be used to give tight bounds on
the convergence of of Markov chains.   Suppose that $\mathcal{M}$
is a Markov chain with state space $\Omega$, transition matrix $P$ and 
stationary distribution $\pi$.  The
\emph{Dirichlet form} associated to $\mathcal{M}$ is defined by
\[ \mathcal{E}_{P,\pi}(f,f) = \dfrac{1}{2} \sum_{x,y\in\Omega} \big(f(x)-f(y)\big)^2\, \pi(x) P(x,y)\]
for any $f:\Omega\to\R$.
This is a weighted measure of how much $f$ varies over pairs of states which 
differ by a single transition.  The \emph{variance} of $f$ with respect
to $\pi$, defined by
\[ \operatorname{Var}_\pi(f) = \dfrac{1}{2} \sum_{x,y\in\Omega} \big(f(x)-f(y)\big)^2\, \pi(x) \pi(y),\]
captures the global variation of $f$ over $\Omega$.
These functions can be used to bound the second-largest eigenvalue $\lambda_1$
of $\mathcal{M}$ as follows:
\[ 1-\lambda_1 = \min_f \, \frac{\mathcal{E}_{P,\pi}(f,f)}{\operatorname{Var}_{\pi}(f)},\]
where the minimum is taken over all non-constant functions $f:\Omega\to\R$.  See for example~\cite[Lemma 13.12 and Remark 13.13]{LPW}.
The Markov chain satisfies a \emph{Poincar{\' e} inequality}  with
constant $\alpha$ if 
$\operatorname{Var}_{\pi}(f) \leq \alpha\, \mathcal{E}_{P,\pi}(f,f)$
for any $f:\Omega\to\R$.
The log-Sobolev inequality has a similar (but more complicated) definition
and can also be used to bound the mixing time~\cite{DSC-logSob}.

Very recently, Tikhomirov and Youssef~\cite{Tik}
proved\footnote{Subject to refereeing.} a sharp Poincar{\' e} inequality, and established a
log-Sobolev inequality, for the switch chain
for regular bipartite graphs.   
Using their Poincar{\' e} inequality, Tikhomirov and Youssef 
proved~\cite[Corollary~1.2]{Tik} that when $3\leq k\leq n^c$ for some universal
constant $c$, the 
mixing time of the switch chain on $k$-regular bipartite graphs satisfies
\[ \tau(\varepsilon) \leq C k n\big( kn\log kn + \log(2\varepsilon^{-1})\big)\]
for some universal constant $C>0$ (constants $c,C$ not explicitly stated).
This is a huge improvement on any previously-known bound.
Tikhomirov and Youssef also state the following mixing time bound, obtained
using their log-Sobolev inequality when $k\geq 3$ is a fixed constant:
\[ \tau(\varepsilon) \leq C_k\, n\log n\bigg( \log n + \log\Big(\frac{1}{2\varepsilon^2}\Big)\bigg).
\]
Here $C_k>0$ is an expression which depends only on $k$.

The proof in~\cite{Tik} is long and technical, and will likely be difficult
to generalise.  But it is exciting to see such a low-degree polynomial
bound on the mixing time of the switch chain for this non-trivial
class of bipartite degree sequences.

\subsubsection{Hypergraphs}

A hypergraph is \emph{uniform} if every edge contains the same number
of vertices.
The incidence matrix of a hypergraph can be viewed as the adjacency
matrix of a bipartite graph, with one part of the bipartition representing
the vertices of the hypergraph and the other part representing the edges.  
Conversely, a bipartite graph gives
rise to a simple hypergraph, by reversing this construction, if
all vertices on the ``edge'' side of the bipartition have distinct 
neighbourhoods.  This is needed to avoid creating a repeated edge.  
In the case of uniform hypergraphs, if the resulting hypergraph is simple 
then it arises from precisely $m!$ distinct
bipartite graphs, where $m$ is the number of edges of the hypergraph.

Hence, any algorithm for sampling bipartite graphs with a given 
half-regular degree sequence
can be transformed into an algorithm for sampling uniform hypergraphs with
given degrees, using rejection sampling.
This is explored by Dyer et al.\ in~\cite{DGKRS}.
Note that a configuration model may be defined for hypergraphs,
though it is equivalent to the corresponding bipartite configuration
model.  The configuration model only gives polynomial-time uniform sampling when the
maximum degree multiplied by the edge size is $O(\log n)$,
see~\cite[Lemma~2.3]{DGKRS}.

Chodrow~\cite{chodrow} introduced a Markov chain which works directly on
uniform hypergraphs.  A transition involves choosing two edges $e,f$ and 
deleting them, then inserting two edges $e', f'$ chosen randomly so
that the degree sequence is unchanged and $e'\cap f' = e\cap f$.
This transitions of this chain are analogous to the transitions of the
Curveball chain~\cite{CK2018} for sampling bipartite graphs.
Chodrow proved that this chain is ergodic, but did not analyse
the mixing time.  It is an open problem to determine classes of 
degree sequences and edge sizes for which this chain is rapidly mixing.

\subsection{Stability of degree sequences }\label{s:stability}

Informally, a class of degree sequences is \emph{stable} 
if $|G(\kvec)|$ varies smoothly
as $\kvec$ ranges over the class~\cite{JSM}.
Work on the connection between the stability of degree sequences and 
mixing rates of Markov chains for sampling from $G(\kvec)$
began with Jerrum and Sinclair's definition of P-stability~\cite{JS90},
stated in Definition~\ref{def:P-stable} above.  A slightly different
version of P-stability was studied by Jerrum, Sinclair and McKay~\cite{JSM}.
Let $\| \xvec\|_1 = \sum_{j\in [n]} |x_j|$ denote the 1-norm of the
vector $\xvec = (x_1,\ldots, x_n)$, and define the set $U(\kvec)$
of all degree sequences $\tilde{\kvec}$ such that
\[ \|\tilde{\kvec}\|_1 = \|\kvec\|_1 \quad \text{ and } \quad
        \| \tilde{\kvec} - \kvec\|_1 = 2.\]
Jerrum, Sinclair and McKay said that a class of degree sequences is P-stable if
there exists a polynomial $q(n)$ such that
\[ 
\left| \bigcup_{\tilde{\kvec}\in U(\kvec)} G(\tilde{\kvec}) \right|
     \leq q(n)\, |G(\kvec)| \]
for all $\kvec$ in the class.  If this holds then $\kvec$ is also
P-stable in the original sense (Definition~\ref{def:P-stable}).
Let $k_{\min}$ denote the smallest entry of $\kvec$. 
Jerrum et al.~\cite[Theorem~8.1, Theorem~8.3]{JSM} gave two sufficient 
conditions for a degree sequence to belong to a P-stable class.

\begin{theorem}
\emph{\cite{JSM}}\
Recall that $M=M(\kvec)$ is the sum of entries in the
degree sequence~$\kvec$.
\begin{itemize}
\item[\emph{(i)}]
The class of graphical degree sequences $\kvec=(k_1,\ldots, k_n)$ which satisfy
\[ (k_{\max} - k_{\min} + 1)^2 \leq 4k_{\min}\, (n-k_{\max} - 1)\]
is P-stable.  
\item[\emph{(ii)}]
The class of graphical degree sequences $\kvec=(k_1,\ldots, k_n)$ which satisfy
\begin{align*}
& (M-k_{\min}\, n)(k_{\max}\, n - M) \\
& \quad\leq  (k_{\max}-k_{\min})
  \big( (M-k_{\min}\, n)(n-k_{\max}-1) + k_{\min} (k_{\max}\, n - M)\big)
\end{align*}
is P-stable.
\end{itemize}
\label{thm:P-stable}
\end{theorem}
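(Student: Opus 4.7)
The plan is to prove both parts via a switching argument. For each $\tilde\kvec \in U(\kvec)$, I will bound $|G(\tilde\kvec)|/|G(\kvec)|$ by a polynomial in $n$; summing over the at most $\binom{n}{2}$ sequences in $U(\kvec)$ then yields the JSM form of P-stability. Fix $\tilde\kvec \in U(\kvec)$, so there are distinct $i, j \in [n]$ with $\tilde{k}_i = k_i + 1$, $\tilde{k}_j = k_j - 1$, and $\tilde{k}_\ell = k_\ell$ otherwise. The natural \emph{shift} move transforms $G \in G(\tilde\kvec)$ into some $G' \in G(\kvec)$ by choosing $x \in [n] \setminus \{i, j\}$ with $\{i, x\} \in E(G)$ and $\{j, x\} \notin E(G)$, and replacing $\{i, x\}$ by $\{j, x\}$. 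Double counting then gives $|G(\tilde\kvec)|/|G(\kvec)| \leq B/F$, where $B$ upper-bounds the number of shift moves arriving at a target in $G(\kvec)$ and $F$ lower-bounds the number of shift moves leaving a source in $G(\tilde\kvec)$.

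The backward count $B$ is at most $|N_{G'}(j)| \leq k_{\max}$, an unconditional polynomial bound. The forward count satisfies $F(G) = |N_G(i) \setminus (N_G(j) \cup \{j\})| \geq (k_i + 1) - k_j$ by inclusion--exclusion, which is useful only when $k_i \geq k_j$. The main work is therefore to handle the hard case $k_i < k_j$, in which some $G \in G(\tilde\kvec)$ may be \emph{obstructed}, meaning that every neighbour of $i$ is either $j$ or a common neighbour of $i$ and $j$, so no single-edge shift exists.

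For obstructed $G$, I would replace the single shift by an augmented move based on an alternating path of length four through vertices $i, x, y, z, j$, with $\{i, x\}, \{y, z\} \in E(G)$ and $\{x, y\}, \{z, j\} \notin E(G)$. A direct degree check shows that flipping these four edges gives a graph with degree sequence $\kvec$, while leaving the degrees of $x, y, z$ unchanged. The augmented forward count now involves the size of $N_G(i) \cap N_G(j)$ (at least $k_i \geq k_{\min}$ under obstruction), the non-neighbourhood of $x$ outside a few forbidden vertices, and the set of non-neighbours of $j$ outside $\{i, j, x, y\}$ (at least $n - k_{\max} - 3$). Under the hypothesis of part (i), $(k_{\max} - k_{\min} + 1)^2 \leq 4 k_{\min}(n - k_{\max} - 1)$, the deficit $k_j - k_i$ (which is at most $k_{\max} - k_{\min}$) is controlled by the geometric mean $2\sqrt{k_{\min}(n - k_{\max} - 1)}$; this is exactly what is needed to guarantee that the augmented move has at least one valid choice of $(x, y, z)$ and that the corresponding forward and backward counts are polynomially related. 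Part (ii) is handled by the same scheme but with a finer weighted count: rather than the blunt bounds $k_i \geq k_{\min}$ and $n - k_{\max} - 3$, one weights the forward count by how close each participating vertex's degree lies to $k_{\min}$ versus $k_{\max}$, and the more intricate inequality in (ii) is precisely the condition under which the resulting weighted sum dominates the backward multiplicity even when most vertices have degree close to one of the two extremes.

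The main obstacle will be controlling the backward multiplicity of the augmented switching: with five participating vertices, a single target $G' \in G(\kvec)$ could in principle be produced by many inverse operations. The standard remedy is either to canonicalise the construction (for instance, by designating $x$ to be the lexicographically-least common neighbour of $\{i, j\}$ in the graph), or to bound the backward count directly by $O(k_{\max}^3)$ from the local structure around $j$ and $z$. Either way, $B$ remains polynomial in $n$, the augmented switching handles the obstructed inputs, and combining this with the easy bound from the ordinary shift completes the proof for both parts.
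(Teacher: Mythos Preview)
The paper is a survey and does \emph{not} prove this theorem: it simply states the result with the citation \cite{JSM} and then lists some examples of degree classes that satisfy the conditions. There is therefore no proof in the paper to compare your proposal against.

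That said, a few remarks on your sketch itself. Your overall strategy --- a switching/shift argument transforming realisations of $\tilde\kvec$ into realisations of $\kvec$, with an augmenting-path variant to handle the obstructed case --- is indeed the approach taken in the original paper of Jerrum, Sinclair and McKay, so you are on the right track. For part~(i) your outline is plausible but you have not actually verified that the hypothesis $(k_{\max}-k_{\min}+1)^2 \le 4k_{\min}(n-k_{\max}-1)$ guarantees the existence of a valid length-four alternating path; you assert that ``this is exactly what is needed'' without carrying out the count, and the details (ensuring all five vertices are distinct, ensuring the intermediate non-edges, and bounding the backward multiplicity by something explicit rather than ``$O(k_{\max}^3)$'') do require care. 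For part~(ii) your sketch is essentially a placeholder: saying one should ``weight the forward count by how close each participating vertex's degree lies to $k_{\min}$ versus $k_{\max}$'' does not explain where the specific bilinear inequality in~(ii) comes from, nor how it interacts with the switching counts. The actual argument in \cite{JSM} for part~(ii) is more delicate than this, and your proposal as written would not constitute a proof of that part.
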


Jerrum et al.~\cite{JSM} listed several examples of classes of degree 
sequences which satisfy one of these sufficient conditions
and hence are P-stable, including
\begin{itemize}
\item all regular sequences;  
\item all graphical sequences with $k_{\min}\geq 1$ and $k_{\max} \leq 2\sqrt{n}-2$; 
\item all graphical sequences with $k_{\min} \geq n/9$ and $k_{\max} \leq 5n/9-1$.
\end{itemize}
Using (\ref{power-law-facts}) and recalling Definition~\ref{def:plib},
we see that the 
sufficient condition from Theorem~\ref{thm:P-stable}(i) does not cover
heavy-tailed distributions such as the
power-law distribution-bounded degree sequences with $\gamma\in (2,3)$.
The condition from Theorem~\ref{thm:P-stable}(ii) may hold in some
cases but fails whenever $M > 2k_{\min}\, n$.
The first and third examples show that a P-stable class does not have to be
sparse.  

It is possible to define classes of degree sequences which
are not P-stable but for which the switch chain is rapidly 
mixing.  Jerrum, Sinclair and McKay~\cite{JSM} illustrated this using
the degree sequence 
\[ \kvec = \kvec(n) = (2n-1, 2n-2,\ldots, n+1,n,n,n-1,\ldots, 2,1)\]
on $2n$ vertices.  There is a unique realisation of this degree
sequence, so $|G(\kvec)|=1$ and
the switch chain is trivially rapidly mixing on $G(\kvec)$.
However, $|G(\kvec')|$ is exponential in $n$, where
\[ \kvec' = \kvec'(n) = (2n-2, 2n-2,\ldots, n+1,n,n,n-1,\ldots, 2,2)\]
is obtained from $\kvec$ by decreasing the largest degree by~1 and
increasing the smallest degree by~1.
Hence the class $\{ \kvec(n) \mid n\geq 2\}$ is not P-stable.
Erd{\H o}s et al.~\cite{EGMMS} described more general classes of degree
sequences with these properties.  
So P-stability is not a necessary condition
for the switch chain to be efficient.
Rather, the standard proof techniques tend to break down when
the class of degree sequences is not P-stable.

\subsubsection{Strong stability}

Amanatidis and Kleer~\cite{AK,AK-RSA} defined a new notion of stability,
called \emph{strong stability},   which is possibly stronger than P-stability.
Recall that $G'(\kvec)$
denotes the state space of the Jerrum--Sinclair chain.
Say that graphs $G,H$ are at JS-distance $r$  if
$H$ can be obtained from $G$ using at most $r$ transitions
of the Jerrum--Sinclair chain.   Next, let $d_{JS}(\kvec)$ denote
the maximum, over all $G\in G'(\kvec)$, of the minimum distance
from $G$ to an element of $G(\kvec)$.  Then every element of
the augmented state space $G'(\kvec)$ can be transformed
into an element of $G(\kvec)$ in at most $d_{JS}(\kvec)$
transitions of the Jerrum--Sinclair chain.

%\begin{definition}
\begin{Def}
A class of graphical degree sequences is \emph{strongly stable}
if there is a constant $\ell$ such that $d_{JS}(\kvec)\leq \ell$
for all degree sequences $\kvec$ in the class.
\label{def:strongly-stable}
%\end{definition}
\end{Def}

Amanatidis and Kleer proved~\cite[Proposition~3]{AK}
that every strongly stable family is P-stable. It is not known
whether the converse is also true.
The main result of~\cite{AK} is the following.

\begin{theorem}
\label{thm:AK}
\emph{\cite[Proposition~2.3 and Theorem~2.4]{AK-RSA}}\
The switch chain
is rapidly mixing for all degree sequences from a 
strongly stable family.
\end{theorem}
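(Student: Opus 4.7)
The approach is to lift the known rapid mixing of the Jerrum--Sinclair chain on the augmented state space to a rapid mixing bound for the switch chain on $G(\kvec)$, using strong stability as the mechanism by which short sequences of switch transitions simulate sequences of Jerrum--Sinclair transitions that return to $G(\kvec)$.

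First I would invoke the hierarchy of stability notions: strong stability implies P-stability (Proposition~3 of~\cite{AK}), so the Jerrum--Sinclair chain on $G'(\kvec)$ is rapidly mixing by the argument underlying Theorem~\ref{thm:JS}. Via Theorem~\ref{thm:flow}, this rapid mixing is witnessed by a multicommodity flow $f_{\mathrm{JS}}$ on $G'(\kvec)$ whose maximum load $\rho(f_{\mathrm{JS}})$ and maximum path length are both polynomial in $n$.

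Next I would use strong stability to project $f_{\mathrm{JS}}$ onto $G(\kvec)$. Fix, for each $H \in G'(\kvec)$, a canonical element $\phi(H) \in G(\kvec)$ reachable from $H$ by at most $\ell$ Jerrum--Sinclair transitions (with $\phi(H) = H$ when $H \in G(\kvec)$). For each flow-carrying Jerrum--Sinclair path $G = H_0, H_1, \ldots, H_t = G'$ between $G, G' \in G(\kvec)$, I would construct a corresponding switch-chain path from $G$ to $G'$ by concatenating short switch sequences joining $\phi(H_i)$ to $\phi(H_{i+1})$ for successive $i$. Consecutive projections differ by $O(\ell) = O(1)$ Jerrum--Sinclair transitions with both endpoints in $G(\kvec)$, so they can be joined by $O(1)$ switch transitions: two appropriately chosen hinge-flips combine into a single switch, and a deletion followed by an insertion that preserves the degree sequence is likewise equivalent to a switch (or a no-op).

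Finally I would bound the congestion of the resulting switch-chain flow $f_{\mathrm{SW}}$ and apply Theorem~\ref{thm:flow}. Path lengths grow by only a constant factor. To control the edge-load at a switch transition $(Z, Z')$, one needs an injective encoding showing that every Jerrum--Sinclair flow-edge rerouted through $(Z, Z')$ can be recovered from $(Z, Z')$ together with $O(1)$ bits of auxiliary data and at most one additional state in $G'(\kvec)$, giving a blow-up bounded by $|G'(\kvec)|/|G(\kvec)| = \mathrm{poly}(n)$ from P-stability. The switch chain has self-loop probability bounded below by a constant, so (\ref{convenient}) yields a constant bound on $(1+\lambda_{N-1})^{-1}$, and Theorem~\ref{thm:flow} then delivers the desired polynomial mixing time. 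The main obstacle is this congestion bound: quantifying exactly how many Jerrum--Sinclair flow-paths can be rerouted through a fixed switch transition, and verifying the requisite injection in full generality (in particular, accounting for the differing stationary normalisations on $G(\kvec)$ versus $G'(\kvec)$ and for the bookkeeping needed when a Jerrum--Sinclair transition is not itself a switch), is where the bulk of the technical work lies.
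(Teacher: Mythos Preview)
Your proposal captures the essential strategy the paper describes: obtain a good multicommodity flow for the Jerrum--Sinclair chain on $G'(\kvec)$, then use strong stability to transform it into a good flow for the switch chain on $G(\kvec)$, with the congestion analysis being where the real work lies. One small correction: Theorem~\ref{thm:flow} gives the direction ``good flow $\Rightarrow$ rapid mixing'', not the converse you invoke; in the actual argument Amanatidis and Kleer construct the Jerrum--Sinclair flow \emph{directly} (this is exactly the paper's remark that it is ``much easier to define a good multicommodity flow for the Jerrum--Sinclair chain than for the switch chain''), rather than first deducing rapid mixing via P-stability and then extracting a witnessing flow.
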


The proof of Theorem~\ref{thm:AK} rests on the observation 
that it is much easier to define a good
multicommodity flow for the Jerrum--Sinclair chain than for the 
switch chain.  Next, Amanatidis and Kleer prove that when $\kvec$ is strongly stable,
a good flow for the Jerrum--Sinclair chain can be transformed
into a good flow for the switch chain.   

Amanatidis and Kleer gave analogous results for bipartite degree 
sequences~\cite[Theorem~17]{AK}. Their framework provided a unified
proof of many rapid mixing results for the switch chain
for graphs, and bipartite graphs, with given degrees.
The authors of~\cite{AK} remark that their ``unification of the existing
results [...] is qualitative rather than quantitative,
in the sense that our simpler, indirect approach provides weaker polynomial
bounds for the mixing time.''

\subsubsection{Rapid mixing for P-stable degree classes}

Erd\H{o}s et al.~\cite{hungarians} defined a new
multicommodity flow for the switch chain. 
The symmetric difference is decomposed into \emph{primitive}
alternating circuits, such that no vertices is visited more than twice
on a primitive circuit, and if a vertex is visited twice then
the two occurrences are at an odd distance from each other around the
circuit. Then the primitive alternating circuits are processed
in a carefully-chosen order.  An encoding is defined (and
is called an ``auxiliary matrix'') such that it is at most
three switches away from (the adjacency matrix of) an element
of the set $G'(\kvec)$.  By definition, P-stability guarantees that
$|G'(\kvec)|\leq q(n)\, |G(\kvec)|$ for some polynomial $q(n)$.
Furthermore, there are a polynomial number of ways to choose each of the 
(at most 3) switches.  Hence, when $\kvec$ is P-stable, we conclude
that the number of encodings is at most polynomially larger than
$|G(\kvec)|$. This proves the ``critical lemma'' for this flow,
and establishes the following.

\begin{theorem} \emph{\cite[Theorem~1.3]{hungarians}}\  
The switch Markov chain
is rapidly mixing on all degree sequences contained in a P-stable
class.
\label{thm:hungarians}
\end{theorem}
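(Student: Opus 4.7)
The plan is to apply the multicommodity flow method via Theorem~\ref{thm:flow} to the switch chain on $G(\kvec)$, following the Cooper--Dyer--Greenhill blueprint but replacing the combinatorial heart of the argument so that it works for an arbitrary degree sequence in a P-stable class. Given source and target graphs $G,G'\in G(\kvec)$, I would first decompose the symmetric difference $H=G\triangle G'$ into \emph{primitive alternating circuits}: closed alternating walks (alternating between edges of $G\setminus G'$ and $G'\setminus G$) in which no vertex appears more than twice, and such that any vertex appearing twice has its two occurrences at odd distance along the circuit. A choice of decomposition plus an ordering of its circuits will parameterise a set $\Psi(G,G')$ indexing the canonical paths; the unit flow from $G$ to $G'$ is split equally among them, and each path is built by processing one primitive circuit at a time, switching its edges to agree with $G'$.

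To bound the load of a transition $e=(Z,Z')$ used on some path $\gamma_\psi(G,G')$, I would define an encoding $L=L(G,G',Z,\psi)$ via the matrix identity $L+Z=G+G'$, viewed over symmetric $0/1$ adjacency matrices. The careful processing order of the primitive circuits ensures that $L$ differs from an ordinary graph only at a bounded number of \emph{defect edges} (entries equal to $-1$ or $2$), concentrated near the current endpoints of the circuit being processed. The critical lemma to establish is then $|\mathcal{L}(Z)|\le q_1(n)\,|G(\kvec)|$ for some polynomial $q_1$; given such a bound, together with the polynomial count of pairs $(G,G',\psi)$ consistent with a fixed triple $(Z,Z',L)$, standard bookkeeping yields $\rho(f)=\operatorname{poly}(n)$, and since $\ell(f)\le M$ and $\log|G(\kvec)|=O(M\log M)$ by (\ref{Gk-size}), Theorem~\ref{thm:flow} delivers rapid mixing.

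The new idea, and the step where I expect to spend almost all of the work, is proving the critical lemma by routing through $G'(\kvec)$ rather than $G(\kvec)$. Specifically, I would show that every encoding $L$ can be transformed, using at most three switch-like local moves at the defect edges, into the adjacency matrix of some graph $H\in G'(\kvec)$; in the reverse direction, only polynomially many tuples of local moves can produce a given encoding from a given $H\in G'(\kvec)$. This yields $|\mathcal{L}(Z)|\le q_2(n)\,|G'(\kvec)|$, and P-stability (Definition~\ref{def:P-stable}) then gives $|G'(\kvec)|\le q(n)\,|G(\kvec)|$, from which the required bound on $|\mathcal{L}(Z)|$ follows. By contrast, the earlier arguments of~\cite{CDG,GS} tried to land inside $G(\kvec)$ itself, which is why they needed regularity or a bound on $\kmax$; by relaxing the target to $G'(\kvec)$, P-stability absorbs exactly the cost that was previously unmanageable.

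The main obstacle is the structural claim that every encoding lies within a constant number of local moves of some element of $G'(\kvec)$, with polynomial reverse multiplicity. This rests entirely on the constraint that primitive circuits revisit vertices only in the restricted fashion described above: the ``no vertex visited more than twice, repeats at odd distance'' condition limits how badly the defect edges can interact along the circuit being processed, so a bounded combinatorial repair of $L$ always exists. Verifying that such a decomposition into primitive alternating circuits always exists, that a good processing order can be chosen, and that the resulting defects admit a constant-length repair sequence into $G'(\kvec)$, constitute the technical core of the proof.
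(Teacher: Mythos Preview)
Your proposal is essentially the same as the argument of~\cite{hungarians} as described in the paper: decompose the symmetric difference into primitive alternating circuits (vertices visited at most twice, repeats at odd distance), process them in a careful order, define the encoding (``auxiliary matrix'' in~\cite{hungarians}) so that it lies within at most three switches of some element of $G'(\kvec)$, and then invoke P-stability to bound $|G'(\kvec)|/|G(\kvec)|$ and hence the number of encodings. The identification of the critical lemma and the reason previous arguments needed regularity or a bound on $\kmax$ (they targeted $G(\kvec)$ rather than $G'(\kvec)$) also matches the paper's account.
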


Erd\H{o}s et al.\ adapted their analysis to bipartite degree
sequences and directed degree sequences~\cite{hungarians}, 
proving the analogue of
Theorem~\ref{thm:hungarians} in those settings.  
Hence their result extends Theorem~\ref{thm:AK} from strongly stable
to P-stable degree classes, and includes directed degree sequences.
%provides a unified proof of rapid mixing 
%of the switch chain for many families of degree sequences,
bipartite degree sequences and directed degree sequences.

Applying Theorem~\ref{thm:P-stable}(ii) and 
Theorem~\ref{thm:hungarians} to the degree sequence of
$\G(n,p)$ leads to the following result.

\begin{cor} \emph{\cite[Corollary~8.6]{hungarians}}\
%\begin{corollary} \emph{\cite[Corollary~8.6]{hungarians}}\
\label{cor:hungarians}
When $n\geq 100$, the degree sequence of
the binomial random graph $\G(n,p)$ satisfies the condition of
Theorem~\ref{thm:P-stable}(ii)
with probability at least $1-3/n$, so long as 
$p, 1-p\geq \frac{5\log n}{n-1}$.
Hence the switch chain is rapidly mixing on $\G(n,p)$
with probability at least $1-3/n$.
%\end{corollary}
\end{cor}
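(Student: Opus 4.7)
The plan is to show that with probability at least $1 - 3/n$, the degree sequence $\kvec$ of $\G(n,p)$ satisfies the condition of Theorem~\ref{thm:P-stable}(ii), so that $\kvec$ belongs to a P-stable class, and then invoke Theorem~\ref{thm:hungarians} to conclude that the switch chain is rapidly mixing on $G(\kvec)$.

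It suffices to verify the stronger condition of Theorem~\ref{thm:P-stable}(i). Indeed, setting $A = M - k_{\min}\, n$ and $B = k_{\max}\, n - M$, so that $A, B \geq 0$ and $A + B = (k_{\max} - k_{\min})\, n$, the inequality of Theorem~\ref{thm:P-stable}(ii) rearranges (after multiplying by $n$) to
\[
  AB\,(k_{\max} - k_{\min} + 1) \, \leq \, A^2\,(n - k_{\max} - 1) + B^2\, k_{\min},
\]
and AM-GM bounds the right-hand side below by $2AB\sqrt{(n - k_{\max} - 1)\, k_{\min}}$. Hence the condition of Theorem~\ref{thm:P-stable}(i), namely $(k_{\max} - k_{\min} + 1)^2 \leq 4\, k_{\min}\,(n - k_{\max} - 1)$, implies that of Theorem~\ref{thm:P-stable}(ii).

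I would now verify (i) using concentration of degrees. Each degree $d_i$ is binomially distributed with parameters $n-1$ and $p$. Writing $\mu = (n-1)p$ and $\sigma^2 = (n-1)\, p\,(1-p)$, the hypothesis $p, 1-p \geq 5\log n/(n-1)$ ensures $\mu \geq 5\log n$ and $(n-1)(1-p) \geq 5\log n$. Standard Chernoff bounds, with constants chosen so that each tail has probability at most $3/(2n^2)$, combined with a union bound over the $n$ vertices, show that with probability at least $1 - 3/n$ every degree satisfies $|d_i - \mu| = O(\sigma\sqrt{\log n})$. In particular, $(k_{\max} - k_{\min} + 1)^2 = O(\sigma^2 \log n) = O((n-1)\, p\,(1-p)\log n)$, while
\[
 k_{\min} \geq \mu - O(\sigma\sqrt{\log n}) = (n-1)\, p\,(1 - o(1))
\]
and similarly $n - k_{\max} - 1 \geq (n-1)(1-p)(1 - o(1))$, so the right-hand side of (i) is at least $c\,(n-1)^2 p\,(1-p)$ for a positive constant $c$. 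Comparing the two sides reduces (i) to $\log n = O(n-1)$, which holds trivially for $n \geq 100$. Hence $\kvec$ satisfies the condition of Theorem~\ref{thm:P-stable}(ii), so is P-stable, and rapid mixing of the switch chain on $G(\kvec)$ follows from Theorem~\ref{thm:hungarians}.

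The main obstacle will be tracking the Chernoff constants precisely so that the total failure probability is at most $3/n$ (rather than just $O(1/n)$). This is most delicate when $p$ or $1-p$ is close to the boundary value $5\log n/(n-1)$, where the standard Chernoff bound is only just applicable; the constant $5$ and the condition $n \geq 100$ together provide just enough slack to push both the concentration estimate and the subsequent algebraic verification through with the stated probability.
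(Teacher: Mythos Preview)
Your approach is correct and aligns with the paper's treatment: the survey itself gives no detailed proof here, merely stating that the corollary follows by applying Theorem~\ref{thm:P-stable}(ii) and Theorem~\ref{thm:hungarians} to the degree sequence of $\G(n,p)$, with the probabilistic verification deferred to~\cite[Corollary~8.6]{hungarians}. Your concentration-of-degrees argument is exactly the kind of verification that underlies the cited result.

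The one genuine variation is that you verify condition~(i) of Theorem~\ref{thm:P-stable} and deduce~(ii) via your AM--GM argument, whereas the paper (and the cited source) work with condition~(ii) directly. Your implication (i)$\Rightarrow$(ii) is correct, and since (i) is the cleaner inequality to check once the degrees are concentrated, this detour is a reasonable simplification. The trade-off is that (i) is strictly stronger, so in principle the constants $5$ and $n\geq 100$ may have been calibrated against~(ii) rather than~(i); you rightly flag that tracking the Chernoff constants to hit exactly $1-3/n$ is the only real work remaining, and you should be prepared for the possibility that the slack is tighter when verifying~(i) than it would be for~(ii).
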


Indeed, applying Theorem~\ref{thm:JS} we can also conclude that if the
conditions of Corollary~\ref{cor:hungarians} hold then with probability
at least $1-3/n$, the
Jerrum--Sinclair chain gives an FPAUS
for sampling from $G(n,p)$. 

\subsubsection{A new notion of stability}

Recently, Gao and Greenhill~\cite{GG} introduced a new notion\footnote{Called ``$k$-stability'' in~\cite{GG}, but here we reserve $k$ for degrees.}
of stability for classes of degree sequences.  

%\begin{definition}
\begin{Def}
\label{def:k-stability}
Given a positive integer $b$ and nonnegative real number $\alpha$,
a graphical degree sequence $\kvec$ is said to be $(b,\alpha)$-\emph{stable}
if $|G(\kvec')|\leq M(\kvec)^\alpha\, |G(\kvec)|$ for every graphical
degree sequence $\kvec'$ with $\| \kvec'-\kvec\|_1\leq b$.
Let $\mathcal{D}_{b,\alpha}$ be the set of all degree sequences
that are $(b,\alpha)$-stable.  A family $\mathcal{D}$ of degree
sequences is \emph{$b$-stable} if there exists a constant
$\alpha>0$ such that $\mathcal{D}\subseteq \mathcal{D}_{b,\alpha}$.
%\end{definition}
\end{Def}

Gao and Greenhill proved~\cite[Proposition 6.2]{GG} that 2-stability is 
equivalent to P-stability.  The relationship between strong stability
and 2-stability is not known.

Recall that by removing all defect edges, an encoding gives
rise to a graph with degree sequence not too far from $\kvec$.
Gao and Greenhill observe that all degree sequences $\kvec'$ which 
correspond to encodings arising
from the multicommodity flow of~\cite{CDG,GS} satisfy 
$\| \kvec' - \kvec\|_1\leq 8$.  Next, assuming $(8,\alpha)$-stability
they found an upper bound on the number of encodings compatible
with a given graph $Z\in G(\kvec)$: this proves the ``critical lemma''
and leads to the following result.

\begin{theorem} \emph{\cite[Theorem~2.1]{GG}}\ If the
graphical degree sequence $\kvec$ is $(8,\alpha)$-stable for some
$\alpha>0$ then the switch chain on $G(\kvec)$ is rapidly mixing, and
\[ \tau(\varepsilon)\leq 12\, k_{max}^{14}\, n^6\, M^{3+\alpha}\,
    \big( \nfrac{1}{2} M \log M + \log \varepsilon^{-1})\big).\]
\end{theorem}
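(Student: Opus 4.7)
The plan is to adapt the Cooper--Dyer--Greenhill / Greenhill--Sfragara multicommodity flow for the switch chain on $G(\kvec)$, and to replace the combinatorial ``critical lemma'' (Lemma~\ref{lem:CDG-encodings} and its extension in~\cite{GS}) by one that uses $(8,\alpha)$-stability directly. I would take the flow construction from~\cite{CDG,GS} unchanged: for each ordered pair $(G,G')\in G(\kvec)^2$ split the flow $\pi(G)\pi(G')$ equally over canonical paths $\gamma_\psi(G,G')$ indexed by circuit decompositions $\psi\in\Psi(G,G')$ of $G\triangle G'$, where each primitive circuit is processed in the prescribed order. For a transition $e=(Z,Z')$ lying on such a path, define the encoding $L$ by $L+Z=G+G'$ (regarded as adjacency matrices). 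As in~\cite{CDG,GS}, all but at most four entries of $L$ lie in $\{0,1\}$, and the remaining (``defect'') entries lie in $\{-1,2\}$.

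The crux is to bound $|\mathcal{L}(Z)|$ for a given $Z\in G(\kvec)$. The key observation is this: if we set the defect entries of $L$ to zero, the resulting $0/1$ matrix $L^\circ$ is the adjacency matrix of a simple graph whose degree sequence $\kvec'$ satisfies $\|\kvec'-\kvec\|_1\leq 8$, since each of the at most four defect entries perturbs the degree sum at exactly two vertices by at most $1$. By $(8,\alpha)$-stability, the number of such $L^\circ$ is at most $M^\alpha |G(\kvec)|$. Multiplying by the polynomial-in-$n$ number of choices for the positions and labels of the (at most four) defect entries, say $p(n)=O(n^8)$, gives a bound of the form
\[
  |\mathcal{L}(Z)|\ \leq\ p(n)\, M^\alpha\, |G(\kvec)|.
\]

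Once this ``critical lemma'' is in hand, the remaining steps go through as in~\cite{CDG,GS}, none of which relied on regularity. One uses the encoding together with the (constantly many) auxiliary data attached to a flow-carrying path to recover the pair $(G,G')$ from $(Z,Z',L)$ up to at most polynomially many ambiguities, which translates the bound on $|\mathcal{L}(Z)|$ into a bound of the form $\rho(f)\leq C\,\kmax^{14} n^6 M^{3+\alpha}$ on the maximum load. The length of any flow-carrying path is at most $|E(G\triangle G')|\leq M$, so $\ell(f)\leq M$. The transition probabilities in Figure~\ref{fig:switch-code} give a non-negligible self-loop at every state, so (\ref{convenient}) provides $B=\tfrac{3}{2}$ in Theorem~\ref{thm:flow}, and (\ref{Gk-size}) gives $\log|G(\kvec)|\leq \tfrac{1}{2}M\log M$. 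Plugging into Theorem~\ref{thm:flow} yields the claimed bound
\[
  \tau(\varepsilon)\ \leq\ 12\,\kmax^{14}\, n^6\, M^{3+\alpha}\,
     \bigl(\tfrac{1}{2}M\log M+\log\varepsilon^{-1}\bigr).
\]

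The main obstacle is the critical lemma, and specifically the verification that the defect structure of every encoding arising in the flow really does come with the uniform bound $\|\kvec'-\kvec\|_1\leq 8$; this is where one must be careful to inherit from~\cite{CDG,GS} the precise structural description of defect edges along a canonical path, and to check that no defect entry of $L$ can sit outside the support of the processed circuit. Once this is confirmed, $(8,\alpha)$-stability is tailor-made to absorb the count of encodings, and the rest of the proof is bookkeeping from the existing switch-chain flow analysis.
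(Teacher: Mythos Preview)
Your approach is essentially identical to the paper's: reuse the Cooper--Dyer--Greenhill/Greenhill--Sfragara flow unchanged, observe that every encoding corresponds to a simple graph whose degree sequence $\kvec'$ satisfies $\|\kvec'-\kvec\|_1\le 8$, and invoke $(8,\alpha)$-stability to prove the critical lemma, after which the existing analysis carries through verbatim.

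One small slip to correct: you write ``set the defect entries of $L$ to zero'' and then claim each defect perturbs the degree at two vertices by at most~$1$. A defect entry equal to $2$, when zeroed, changes each incident degree by~$2$, so your operation as stated could give $\|\kvec'-\kvec\|_1$ as large as $16$. The fix is to round each defect to the nearest value in $\{0,1\}$ (so $-1\mapsto 0$ and $2\mapsto 1$); then each of the at most four defects contributes at most $2$ to the $\ell_1$-distance, giving the required bound of~$8$. With this adjustment your argument matches the paper's.
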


Gao and Greenhill provided a sufficient condition
for a degree sequence to be 8-stable, and a slightly weaker condition
which guarantees P-stability and strong stability.  These conditions
involve the parameter $J(\kvec)$ defined in (\ref{def:Jd}), and
have been designed to work well for heavy-tailed degree sequences.

\begin{theorem} \emph{\cite[Theorem~2.2]{GG}}\
\label{thm:GG}
\begin{itemize}
\item[\emph{(i)}] Let $\kvec$ be a graphical degree sequence. If $M > 2J(\kvec) + 18 k_{\max} + 56$ then $\kvec$ is (8,8)-stable.
\item[\emph{(ii)}] Suppose that $\mathcal{D}$ is a family of degree sequences
wuch that $M > 2 J(\kvec) + 6 k_{\max} + 2$  for all $\kvec\in\mathcal{D}$.
Then $\mathcal{D}$ is both P-stable and strongly stable.
\end{itemize}
\end{theorem}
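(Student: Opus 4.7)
The plan is a switching-based double-counting argument, iterated to cover differences of $\ell_1$-distance up to $8$. For part (i), given graphical $\kvec'$ with $\|\kvec'-\kvec\|_1\leq 8$, I would build a chain $\kvec'=\kvec^{(0)},\kvec^{(1)},\ldots,\kvec^{(r)}=\kvec$ of graphical degree sequences with $r\leq 4$ and $\|\kvec^{(i)}-\kvec^{(i+1)}\|_1=2$, keeping each intermediate within $\ell_1$-distance $8$ of $\kvec$. The slack baked into the constants $18k_{\max}+56$ is exactly what allows every intermediate $\kvec^{(i)}$ to inherit a stability-type bound (with $k_{\max}$, $M$ and $J(\kvec)$ shifted by $O(1)$). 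It then suffices to prove $|G(\kvec^{(i)})|/|G(\kvec^{(i+1)})|\leq M^2$ at each step.

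For a single step, three cases arise according to $M(\kvec^{(i)})-M(\kvec^{(i+1)})\in\{0,\pm 2\}$: (a) equal totals force a unique excess vertex $u$ and deficit vertex $v$, and I apply a hinge-flip (delete $uw$ and add $vw$ for some $w\notin\{u,v\}$ with $vw\notin E$); (b) a difference of $-2$ calls for an edge deletion between the two excess vertices; (c) a difference of $+2$ calls for an edge insertion. In every case, double-count pairs $(G',\omega)$ with $G'\in G(\kvec^{(i)})$ and $\omega$ a valid operation producing some $G\in G(\kvec^{(i+1)})$:
\[
|G(\kvec^{(i)})|\cdot f_{\min}\;\leq\;\#\{(G',\omega)\}\;\leq\;|G(\kvec^{(i+1)})|\cdot b_{\max},
\]
so the desired per-step bound reduces to $b_{\max}/f_{\min}\leq M^2$.

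The upper bound $b_{\max}$ is elementary (the number of reverse operations from a fixed $G$ is $O(k_{\max}M)$); the main obstacle is the lower bound $f_{\min}$, which is where $J(\kvec)$ enters. For the hinge-flip, candidate edges $uw$ number $k^{(i)}_u$, from which one must exclude $w\in N_{G'}(v)\cup\{u,v\}$ and those $w$ whose use would create a repeated pair. Bounding the latter requires controlling the number of $2$-paths passing through the endpoints of the operation, which is precisely the role of $J(\kvec)$ (recall that $J(\kvec)$ dominates the number of $2$-paths from any vertex and is sharper than $k_{\max}^2$ for heavy-tailed sequences). The hypothesis $M>2J(\kvec)+18k_{\max}+56$ leaves enough room, even after four iterations have shifted the parameters, to guarantee $f_{\min}\geq cM$ for an absolute constant $c>0$, from which $b_{\max}/f_{\min}=O(k_{\max})=O(M)$ yields the required $M^2$ per-step bound.

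For part (ii), the weaker assumption $M>2J(\kvec)+6k_{\max}+2$ is calibrated so that a \emph{single} step of the same argument (with $\|\kvec-\kvec'\|_1=2$) goes through, producing $(2,\alpha)$-stability for some constant $\alpha$; by \cite[Proposition~6.2]{GG} this is P-stability. For strong stability I would show $d_{JS}(\kvec)\leq\ell$ for a small absolute constant $\ell$: any $G'\in G'(\kvec)\setminus G(\kvec)$ has deficit pattern $(1,1)$ or $(2)$; in the first case, insert the missing edge $uv$ in one JS-step if $uv\notin E(G')$, and otherwise perform a single hinge-flip to rotate a neighbour of $u$ onto a common non-neighbour of $\{u,v\}$, after which the insertion succeeds. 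The hypothesis $M>2J(\kvec)+6k_{\max}+2$ guarantees the existence of such a pivot, since the first neighbourhood of $\{u,v\}$ together with the relevant $2$-paths covers only $O(k_{\max}+J(\kvec))$ vertices, leaving plenty of room. The single-vertex deficit-$2$ case is handled analogously. The main technical obstacle throughout is the same $2$-path counting that drives $f_{\min}$ in (i), now expressed as a local existence claim rather than a global lower bound.
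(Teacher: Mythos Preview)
Your high-level plan---reduce $(8,8)$-stability to at most four steps of $\ell_1$-distance $2$, then handle each step by a switching/double-counting argument---is exactly the paper's approach: the chaining step is packaged there as \cite[Lemma~4.1]{GG}, and the per-step $(2,\alpha)$-stability is the switching argument. Your treatment of part~(ii) (single step for P-stability, plus a direct bounded-JS-distance argument for strong stability) is also in line with~\cite{GG}.

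There is, however, a concrete gap in your per-step argument. The three ``simple'' operations you describe can all fail to exist: for the hinge-flip, if $N_{G'}(u)\subseteq N_{G'}(v)\cup\{v\}$ (which certainly occurs, for instance whenever $k^{(i)}_u\leq k^{(i)}_v$ and the neighbourhoods nest) there is no valid $w$ and $f_{\min}=0$; for insertion, $uv$ may already be an edge of $G'$; for deletion, $uv$ may be a non-edge. Moreover, for the bare hinge-flip as you wrote it, no $2$-paths arise at all---the only exclusions are $w\in N_{G'}(v)$ and $w\in\{u,v\}$---so your invocation of $J(\kvec)$ does not match the operation you described.

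The way $J(\kvec)$ genuinely enters is that the per-step switching must be augmented by an auxiliary random (ordered) edge $xy$ drawn from all of $G'$. For example, to transfer one unit of degree from $u$ to $v$, delete $\{uw,xy\}$ and insert $\{wx,vy\}$; the forbidden cases are now $wx\in E(G')$ or $vy\in E(G')$, each counted by the number of $2$-paths from a fixed vertex, hence bounded by $J(\kvec)$. The hypothesis $M>2J(\kvec)+O(k_{\max})$ then yields $f_{\min}\geq c\,k^{(i)}_u\,M$ (or the analogous bound) for the \emph{augmented} operation, not for the bare hinge-flip. The insertion and deletion cases are repaired analogously by incorporating an auxiliary edge. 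With this correction your argument goes through and matches~\cite{GG}.
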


The proof of Theorem~\ref{thm:GG}(i) uses the switching method. Then
Theorem~\ref{thm:GG}(ii) follows using the fact, proved 
in~\cite[Lemma 4.1]{GG}, that if every graphical
degree sequence $\kvec'$ with $\| \kvec' - \kvec\|_1 \leq 6$ is
$(2,\alpha)$-stable then $\kvec$ is $(8,4\alpha)$-stable. 

Finally, Gao and Greenhill prove that various families of heavy-tailed degree
sequences satisfy the condition of Theorem~\ref{thm:GG}(i), and
hence are 8-stable, strongly stable and P-stable.  In particular~\cite[Theorem 5.3]{GG},
the family of power-law distribution-bounded sequences with parameter
$\gamma>2$ is P-stable.

\bigskip

Gao and Greenhill gave analogous definitions and results for directed
degree sequences~\cite[Section 7]{GG}.

\subsection{Restricted graph classes}

We briefly describe some related work on using rapidly mixing 
Markov chains to sample from restricted classes of graphs with given degrees.

\subsubsection{Joint degree matrices}  

In some applications, it is desirable to be able to specify not just
the degrees of a graph, but also the number of edges between vertices
with given degrees.  This can help to capture network properties
such as \emph{assortativity}, which is the tendency for vertices
with similar degrees to be adjacent.  A 
\emph{joint degree matrix}~\cite{AGM,PH} 
stores the number of edges $J_{ij}$ with one endvertex of degree $i$ and
the other of degree $j$, for all relevant $i,j$.
A sequential importance sampling approach for sampling graphs with a specified
joint degree matrix was given in the physics
literature by Bassler et al.~\cite{BDGEMT}, without full analysis.

The switch chain can be adapted to sample from the set of all graphs with
a given degree sequence and given joint degree matrix, by rejecting
any transition which would change any entry in the joint degree matrix.
Stanton and Pinar~\cite{StantonPinar} gave empirical evidence that suggests
that the switch chain mixes rapidly on graphs with a prescribed joint
degree matrix, but there are few rigorous results.
Erd\H{o}s, Mikl\'os and Toroczkai~\cite{EMT2015} proved that the switch chain
is rapidly mixing on the set of all \emph{balanced} realizations of a given
joint degree matrix.  Here a realization is balanced if for
all vertices $v$ with degree $i$, the number of
neighbours $w$ of $v$ with degree $j$ is within 1 of the value
$J_{ij}/n_i$, where 
$n_i$ is the number of vertices with degree $i$.
Their proof involved a new Markov chain decomposition 
theorem~\cite[Theorem 4.3]{EMT2015}, similar to that of Martin and 
Randall~\cite{MR}.

Amanatidis and Kleer~\cite{AK} showed that the switch chain is
rapidly mixing on the set of all realizations of any joint degree matrix 
with just two degree classes.
Their analysis is quite technical, and moving beyond two degree classes
seems to be a challenging problem.

\subsubsection{Connected graphs}

The switch chain may disconnect a connected graph, which can be
undesirable in some applications such as communications networks.
One possibility is to simply reject any proposed switch which
would disconnect the graph. Gkantsidis et al.~\cite{GMZ} investigated
the performance of this restricted switch chain empirically, but without 
rigorous analysis.  Note that the set of connected graphs with given degree
sequence was shown to be connected under switches by Taylor~\cite{taylor}
in 1981.

Mahlmann and Schindelhauer~\cite{MS} proposed an alternative operation,
called the $k$-Flipper.  Here a switch is performed if the edges of
the switch are 
at distance at most $k$ apart in the graph.  In the 1-Flipper,
or \emph{flip chain},
the switch operation takes a path of length 3 and exchanges its
endvertices, as shown in Figure~\ref{fig:flip}.  Clearly this operation,
known as a \emph{flip}, cannot disconnect a connected graph.

\begin{figure}[ht!]
\begin{center}
\begin{tikzpicture}[scale=1.0]
\draw [fill] (0,0) circle (0.1); \draw [fill] (0,2) circle (0.1);
\draw [fill] (2,0) circle (0.1); \draw [fill] (2,2) circle (0.1);
\draw [thick,-] (0,0) -- (0,2);  \draw [thick,-]  (2,0) -- (2,2);
\draw [thick,-] (0,0) -- (2,0); 
\draw [<->, line width=3pt] (3.5,1) -- (4.5,1);
\begin{scope}[shift={(6,0)}]
\draw [fill] (0,0) circle (0.1); \draw [fill] (0,2) circle (0.1);
\draw [fill] (2,0) circle (0.1); \draw [fill] (2,2) circle (0.1);
\draw [thick,-] (2,2) -- (0,0) -- (2,0) -- (0,2);  
\end{scope}
\end{tikzpicture}
\caption{Transitions of the flip chain}
\label{fig:flip}
\end{center}
\end{figure}

The flip chain is rapidly mixing on the set of all connected $k$-regular
graphs, for any $k$.  This was investigated by Feder et al.~\cite{FGMS},
with full analysis and improved mixing time bound given by Cooper et al.~\cite{CDGH}.
These proofs involve a comparison argument, where a sequence of
flips is used to simulate a single switch.  If a switch disconnects
or connects components, then a clever ``chaining'' argument from~\cite{FGMS}
is used to stay within the space of connected graphs. 

The flip operation can be used to re-randomise a given connected network
(such as a communications network) without any risk of disconnecting
the network.  Expander graphs are fixed graphs which enjoy
some pseudorandom properties, such as logarithmic diameter and high
connectivity~\cite{HLW}.
Allen-Zhu et al.~\cite[Theorem~4.2]{ABLMO} proved that when 
$k\geq c\log n$ for some positive constant $c$,  
performing $O(k^2 n^2 \log n)$ randomly-chosen flips 
produces an expander with high
probability, starting from any $k$-regular graph.
They also applied their methods to the switch chain, showing
that $O(kn)$ randomly-chosen switches suffice to produce an expander,
with high probability.    Hence in situations where the output
does not need to be close to uniform, but where pseudorandomness
is enough, the runtime of the algorithm can be much shorter.

\section{Conclusion}\label{s:conclusion}

We have discussed rigorously-analysed
algorithms for sampling graphs with a given degree sequence, uniformly or
approximately uniformly.   Some algorithms are inefficient when the maximum
degree becomes too high.  For other approaches, the boundary between
tractable and intractable degree sequences is not clear.   Mapping out
this frontier is an interesting open problem.  Are there families of degree sequences
for which the switch chain is provably slow?
Connections with stability of degree sequences have also been
discussed. As well as their theoretical interest, there are connections
between the stability of degree sequences and network privacy,
as investigated by Salas and Torra~\cite{SalasTorra}.

A challenging open problem is to find an FPRAS for counting graphs
with given (arbitrary) degree sequences. The corresponding problem for
bipartite graphs with solved by Jerrum, Sinclair and Vigoda~\cite{JSV}.

There are many related sampling algorithms which are just outside the
scope of this survey.   
One example is the use of Boltzmann samplers~\cite{DFLS}
to sample from 
other restricted graph classes, including planar graphs~\cite{fusy}.
As well as providing algorithms, this approach can be used to investigate
typical properties of random graphs generated in this way, see for
example~\cite{mcdiarmid}.  

To close, we mention some algorithms
where the degree sequence itself is a random variable.
Some fuzziness in the degree sequence can be useful in some applications,
perhaps to account for inaccuracies in the data, or to avoid overfitting.
The excellent book by Van der Hofstad~\cite{vdH-book} is a very 
good reference for further reading on these topics.  

\begin{itemize}
\item
In the network theory literature, often $k_1,\ldots, k_n$
are i.i.d.\ random variables drawn from some fixed distribution.
If the resulting sum is odd then $k_n$ is increased by 1.
More generally, a degree sequence $\kvec$ can be drawn at random from a given
distribution, and then a graph from $G(\kvec)$ can be sampled
uniformly, or approximately uniformly, using one of the methods discussed
in this survey.
\item 
\emph{Inhomogeneous random graphs}
are similar to the binomial random graph $\mathcal{G}(n,p)$
except that different edges have different probabilities.  For
a sequence $\wvec \in\R^n$ of positive vertex weights
and a function $f:\R^2\to [0,1]$, 
the edge $\{i,j\}$ is included in the graph with probability
$f(w_i,w_j)$,  independently for each edge.
An example is the \emph{generalised random graph model}~\cite{BDM-L} 
with $f(w_i,w_j) = \frac{w_iw_j}{M(\wvec) + w_iw_j}$, where
$M(\wvec) = \sum_{\ell\in [n]} w_{\ell}$ is the sum of the weights.
The Chung-Lu algorithm~\cite{CL} uses $f(w_i,w_j) = w_iw_j/M(\wvec)$,
under the assumption that the maximum entry of $\wvec$ is $o(M(\wvec)^{1/2})$.
The output of the Chung-Lu algorithm is a random graph with expected
degree sequence $\wvec$.
\item 
Another algorithm which produces a graph with degree sequence
close to some target sequence is the 
\emph{erased configuration model}~\cite{BDM-L}.
First sample a uniformly random configuration, and in the corresponding
graph, delete any loops
and delete all but one copy of each multiple edge.  Call the resulting
graph $\widehat{G}$.   
If $\kmax = o(M^{1/2})$ and $R=O(M)$ then, arguing as in the proof of
Theorem~\ref{thm:config}, with high probability only a very small number of
edges were deleted, and hence the degree sequence of $\widehat{G}$
is likely to be very close to $\kvec$.
Other variations of the configuration 
model are described in~\cite[Section 7.8]{vdH-book}, including models 
which are tailored to encourage other network properties, such as clustering.  
\end{itemize}

% End matter

\thankyou{The author is very grateful to
the anonymous referee and to the following colleagues for their feedback, which
helped improve this survey: 
Martin Dyer, Peter Erd{\H o}s, Serge Gaspers, Pieter Kleer,
Brendan McKay, Eric Vigoda and Nick Wormald. 
This survey is dedicated to Mike. 
}

%%%%%%%%%%%%%%%%%%%%%%%>>>>>>>>>>>>>>>>>>>

\myaddress

\bigskip
\bigskip
\bigskip

\appendix
\section{Comment added, 6 August 2025}

After writing this survey paper, I realised that in~\cite[Section~4.1]{FGMS}, Feder, Guetz, Mihail and Saberi's showing that the switch chain is rapidly mixing for any family of degree sequences for which asymptotic enumeration formulae exist for the number of realisations. See in particular~\cite[Corollary 6]{FGMS}. I do not know how I had overlooked this, especially since I had used their ingenious embedding argument for the flip chain analysis from this paper in my own work. I apologise for not discussing~\cite[Section~4.1]{FGMS} in this survey article, and in my other papers on the switch chain for irregular degree sequences.

\end{document}